\def\Alphabet{A,B,C,D,E,F,G,H,I,J,K,L,M,N,O,P,Q,R,S,T,U,V,W,X,Y,Z}
\def\alphabet{a,b,c,d,e,f,g,h,i,j,k,l,m,n,o,p,q,r,s,t,u,v,w,x,y,z}
\def\endpiece{xxx}
\def\makeAlphabet[#1]{\expandafter\makeA#1,xxx,}
\def\makealphabet[#1]{\expandafter\makea#1,xxx,}
\def\makeA#1,{\def\temp{#1}\ifx\temp\endpiece\else%
\mkbb{#1}\mkfrak{#1}\mkbf{#1}\mkcal{#1}\mkscr{#1}\mkbs{#1}\expandafter\makeA\fi}%
\def\makea#1,{\def\temp{#1}\ifx\temp\endpiece\else\mkfrak{#1}\mkbf{#1}\mkbs{#1}\expandafter\makea\fi}%
\def\mkbb#1{\expandafter\def\csname bb#1\endcsname{\mathbb{#1}}}
\def\mkfrak#1{\expandafter\def\csname fr#1\endcsname{\mathfrak{#1}}}
\def\mkbf#1{\expandafter\def\csname b#1\endcsname{\mathbf{#1}}}
\def\mkcal#1{\expandafter\def\csname c#1\endcsname{\mathcal{#1}}}
\def\mkscr#1{\expandafter\def\csname s#1\endcsname{\mathscr{#1}}}
\def\mkbs#1{\expandafter\def\csname bs#1\endcsname{{\boldsymbol{#1}}}}
\def\makeop[#1]{\xmakeop#1,xxx,}
\def\mkop#1{\expandafter\def\csname #1\endcsname{{\mathrm{#1}}}} %
\def\xmakeop#1,{\def\temp{#1}\ifx\temp\endpiece\else\mkop{#1}\expandafter\xmakeop\fi}%
\def\makeup[#1]{\xmakeup#1,xxx,}
\def\mkup#1{\expandafter\def\csname #1\endcsname{{\mathrm{#1}\,}}} %
\def\xmakeup#1,{\def\temp{#1}\ifx\temp\endpiece\else\mkup{#1}\expandafter\xmakeup\fi}%
\newcommand*{\sheafhom}{\cH\kern -.5pt om}
\newcommand{\iso}{\simeq}
\newcommand\restr[2]{{
  \left.\kern-\nulldelimiterspace 
  #1 
  \vphantom{\big|} 
  \right|_{#2} 
  }}
  \newcommand{\ctp}{\mathrm{CT}}
\newcommand{\pair}[2]{\langle #1, #2\rangle}
\newcommand{\thmbreakheader}{\Needspace *{2\baselineskip}\item} 
\DeclareSymbolFont{cyrletters}{OT2}{wncyr}{m}{n}
\DeclareMathSymbol{\Sha}{\mathalpha}{cyrletters}{"58}
\theoremstyle{plain}
    \newtheorem{theorem}{Theorem}[section]
    \newtheorem{proposition}[theorem]{Proposition}
    \newtheorem{corollary}[theorem]{Corollary}
      \newtheorem{proposition-definition}[theorem]{Proposition-Definition}
      \newtheorem{conjecture}[theorem]{Conjecture}
\theoremstyle{definition}
    \newtheorem{definition}[theorem]{Definition}
    \newtheorem{remark}[theorem]{Remark}
\begin{document}

\author[Maistret, C.]{Celine Maistret}
\thanks{}
\address{Department of Mathematics, University of Bristol}
\email{\href{celine.maistret@bristol.ac.uk}{celine.maistret@bristol.ac.uk}}
\author[Shukla, H.]{Himanshu Shukla}
\address{Mathematisches Institut, Universit\"at Bayreuth}
\email{\href{Himanshu.Shukla@uni-bayreuth.de}{Himanshu.Shukla@uni-bayreuth.de}}

\title[]{On the factorization of twisted $L$-values and $11$-descents over $C_5$-number fields}

\begin{abstract}
We investigate the Galois module structure of the Tate-Shafarevich group of elliptic curves. 
For a Dirichlet character $\chi$, 
we give an explicit conjecture relating the ideal factorization of $L(E,\chi,1)$
to the Galois module structure of $\Sha(E/K)$, where $\chi$ factors through the Galois group of $K/\bbQ$.

We provide numerical evidence for this conjecture using the methods of visualization and $p$-descent. For the latter, we present a procedure that makes performing an $11$-descent over a $C_5$ number field practical for an elliptic curve $E/\bbQ$ with complex multiplication. We also expect that our method can be pushed to perform higher descents (e.g. $31$-descent) over a $C_5$ number field given more computational power. 
\end{abstract}

\subjclass[2020]{11G05, 11Y99, 11G40}
\maketitle

\tableofcontents
\section{Introduction}\label{introduction}

For an elliptic curve $E/\bbQ$ and a Dirichlet character $\chi$ factoring through a number field $K/\bbQ$ we investigate the connection between the Galois module structure of $\Sha(E/K)$ and the algebraic special $L$-value of the twisted $L$-function $L(E/\bbQ,\chi,1)$. We start by recalling the Birch and Swinnerton-Dyer conjecture (BSD), which links the size of Tate-Shafarevich groups of elliptic curves to the special value of their $L$-functions.

\subsection{Conjectural order of $\Sha(E/K)$}
Computing $\Sha(E/K)$ is a fundamental but hard problem.
Assuming its finiteness, the Birch and Swinnerton-Dyer (BSD) conjecture provides a formula for its size in terms of arithmetic invariants of the curve. 
Let $L(E/K,s)$ be the Hasse-Weil $L$-function associated to $E/K$. 
\begin{conjecture}[Birch and Swinnerton-Dyer Conjecture] \label{BSD}
    Let $E/K$ be an elliptic curve. Then 
\begin{enumerate}
    \item $\ord_{s=1} L(E/K,s) = \rk(E/K)$,
    \item the leading term of the Taylor series at $s=1$ of L(E/K,s) is given by 
    \begin{equation}\label{BSDQuo}
    \lim_{s \to 1} \frac{L(E/K,s)}{(s-1)^r}\cdot \frac{\sqrt{|\Delta_K|}}{\Omega_+(E)^{r_1+r_2}|\Omega_-(E)|^{r_2}}=\frac{\Reg_{E/K} |\Sha_{E/K}|C_{E/K}}{|E(K)_{\tors}|^2},
     \end{equation}
where $\Reg_{E/K}$ is the regulator of $E/K$, $\Delta_K$ is the discriminant of the field, $r$ is the order of the zero, $(r_1, r_2)$ is the signature of $K$, $\Omega_{\pm}$ are the periods of $E$ and $C_{E/K}$ is the product of Tamagawa numbers and other local fudge factors from finite places.
\end{enumerate}
\end{conjecture}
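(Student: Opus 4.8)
Conjecture~\ref{BSD} is the Birch and Swinnerton-Dyer conjecture, one of the Clay Millennium Prize Problems; no proof is known in general, and part~(1) is already open once the analytic rank is at least $2$. So my plan can only be to assemble the known partial cases; here is the strategy and the point at which it stalls. In analytic rank $0$ and $1$ over $\bbQ$ the route is the Gross--Zagier/Kolyvagin one. Modularity of $E/\bbQ$ first supplies the analytic continuation and functional equation of $L(E/\bbQ,s)$, so that the order of vanishing $r_{\mathrm{an}}$ is defined. If $r_{\mathrm{an}}=1$, the Gross--Zagier formula expresses the N\'eron--Tate height of a Heegner point attached to a suitable imaginary quadratic field in terms of $L'(E/\bbQ,1)\neq 0$; the point is then non-torsion, so $\rk(E/\bbQ)\geq 1$, and feeding the full norm-compatible system of Heegner points into Kolyvagin's Euler-system argument bounds $\Sel(E/\bbQ)$ from above, forcing $\rk(E/\bbQ)=1$ and $\Sha_{E/\bbQ}$ finite. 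If $r_{\mathrm{an}}=0$, the same Euler system, now with $L(E/\bbQ,1)\neq 0$, annihilates $\Sel(E/\bbQ)$, giving $\rk=0$ and $\Sha$ finite.

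To pass from $\bbQ$ to a number field $K$ one restricts to situations where $L(E/K,s)$ is itself accessible. The cleanest case, and the one relevant to this paper, is $K/\bbQ$ abelian, where $L(E/K,s)=\prod_\chi L(E/\bbQ,\chi,s)$ with $\chi$ running over the characters cutting out $K$; here one tries to run twisted Heegner-point and Euler-system arguments, or to exploit the anticyclotomic/CM theory, one character at a time, and this factorization is precisely the bridge the present paper is built on.

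Part~(2) is harder still: even for $r_{\mathrm{an}}\in\{0,1\}$ the leading-term identity~\eqref{BSDQuo} demands the exact order of $\Sha_{E/K}$, that is control of its $p$-primary part for \emph{every} prime $p$, not merely finiteness. One divisibility of the $p$-part of the BSD quotient comes from Kolyvagin's bound; the reverse divisibility is the content of the Iwasawa main conjecture for $E$ (Kato's Euler system together with the work of Skinner--Urban and the complementary divisibilities) and is known for $p$ of good ordinary reduction with large residual Galois image, with supersingular and additive $p$, and the small primes $p=2$ and $p=3$, requiring separate and considerably more delicate treatment; over general $K$ the Iwasawa-theoretic input available is much thinner.

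The decisive obstacle is the construction of global points. In analytic rank $\geq 2$ there is currently no mechanism --- no Euler system, no Gross--Zagier-type height formula --- that manufactures the $\rk(E/K)$ independent points predicted by part~(1), so even the inequality $\rk(E/K)\geq r_{\mathrm{an}}$ is out of reach there; and in ranks $0$ and $1$ the recurring bottleneck is making the $p$-part of $\Sha$ uniform in $p$ (the primes $2$, $3$, and those of bad or supersingular reduction, and the case $K\neq\bbQ$, being the persistent trouble spots). Without a genuinely new idea on one of these two fronts a complete proof of the statement cannot be given; accordingly the present paper does not attempt one, and instead subjects fine Galois-module refinements of~\eqref{BSDQuo} to numerical test.
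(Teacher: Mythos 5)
The statement is the Birch and Swinnerton-Dyer conjecture, which the paper states as Conjecture~\ref{BSD} without any proof (it is an open problem, used only as a hypothesis and computational tool elsewhere in the paper), and you correctly recognize this rather than claiming a proof. Your summary of the known partial results (Gross--Zagier/Kolyvagin in analytic rank $\le 1$ over $\bbQ$, Iwasawa-theoretic input for the leading-term formula, and the obstruction in rank $\ge 2$ and over general $K$) is accurate and consistent with how the paper treats the conjecture.
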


Following \cite{dew}, we will denote the right-hand side of \eqref{BSDQuo} by $\BSD(E/K)$. In practice, $L(E/K,s)$ is computable up to a ``reasonable'' approximation (see \cite{Tim_L_function}). Therefore Conjecture \ref{BSD}(2) can be used to reasonably approximate $|\Sha(E/K)|$ by computing the other quantities in $\BSD(E/K)$. This computation is referred to as computing the order of the \emph{analytic} Tate-Shafarevich group $\Sha_{\text{an}}(E/K)$ (see the L-function and Modular form database LMFDB \cite{lmfdb} for examples of such computations).

\subsection{Conjectural Galois module structure of $\Sha(E/K)$}\label{sec:conjGalSha}
If $K/\bbQ$ is a Galois extension and $E/\bbQ$ is an elliptic curve, then the $n$-descent sequence 
\begin{equation}\label{ndescent}
    0\to \frac{E(K)}{nE(K)}\to \Sel^{(n)}(E/K)\to \Sha(E/K)[n]\to 0
\end{equation}
 is an exact sequence of Galois modules. One can then ask if a BSD-like formula can be used to link the Galois module structure of $\Sha(E/K)$ to $L(E/\bbQ, \rho,s)$, where $\rho$ is an Artin representation associated to $K/\bbQ$. This is related to conjectures such as the Equivariant Tamagawa Number Conjecture, and can be formalized as follows. 

Following Notation 15 in \cite{dew} we define the Euler factor at a prime number $p$ of $L(E/\bbQ, \rho, s)$ to be 

\begin{equation}\label{euler_fact}
P_\ell(E,\rho, s)\coloneqq \det(\text{Id}-\text{Frob}_\ell^{-1} \ell^{-s} |(H_{p}^1(E)\otimes \rho)^{I_\ell}), 
\end{equation}
where $I_\ell$ denotes the inertia group at $\ell$ and $H_{p}^1(E) = H^1_{et}(E, \bbQ_{p})\otimes_{ \bbQ_{p}}\bbC$ for any embedding $\bbQ_{p} \hookrightarrow \bbC$ and any prime $p \ne \ell$.

The following definition presents the ``twisted'' analogue of the left-hand side of \eqref{BSDQuo}.
\begin{definition}[Definition 12 in \cite{dew}]\label{twistedLvalue}
For an elliptic curve $E/\bbQ$ and an Artin representation $\rho$ over $\bbQ$, we write 
    $$ \mathscr{L}(E,\rho)= \lim_{s \to 1} \frac{L(E,\rho,s)}{(s-1)^r}\cdot \frac{\sqrt{\frak{f}_{\rho}}}{\Omega_+(E)^{d^+(\rho)}|\Omega_-(E)|^{d^-(\rho)}w_{\rho}},
    $$
    where $r = \ord_{s=1}L(E,\rho,s)$ is the order of the zero at $s=1$, $\frak{f}_{\rho}$ and $w_{\rho}$ are the conductor and root number of $\rho$ respectively, and $d^{\pm}(\rho)$ are the dimensions  of the $\pm$-eigenspaces of complex conjugation in its action on $\rho$.
    \end{definition}

On the other hand, for $p$ an odd prime such that $(p,[K:\bbQ])=1$, $\Sha(E/K)[p]$ decomposes 
as an $\bbF_p$-representation of $\Gal(K/\bbQ)$:
\[\Sha(E/K)[p]\simeq \Sha(E/\bbQ)[p]\oplus
  \bigoplus\limits_\theta(\Sha(E/K)[p])_\theta,\]
where $(\Sha(E/K)[p])_\theta$ is the $\bbF_p$-subspace corresponding to the
non-trivial irreducible sub-representation $\theta$.

\begin{conjecture}\label{exact_question}
Let $p$ be an odd prime, $\chi$ a primitive Dirichlet character of square free order $d$, conductor $\frak{f}_\chi$, factoring through an extension $K^\chi/\bbQ$ with $\Gal(K^\chi/\bbQ)=\langle\tau\rangle$, and let $\mu_d$ denote the group of roots of unity in $\overline{\bbQ}$. Then there is an embedding $\iota:\bbQ(\zeta_d)\to \bbC$ so that for every elliptic curve $E/\bbQ$ with conductor $\frak{f}_E$ coprime to $\frak{f}_\chi$ satisfying \newline  
$\Sha(E/K^\chi)[p^\infty]=\Sha(E/K^\chi)[p]$,
     $E(K)[p^\infty]=E(\bbQ)[p^\infty]$,
    $E(K)[p^\infty]=E(\bbQ)[p^\infty]$, \newline 
     $v_p(C_{E/K})=v_p(C_{E/\bbQ})$, and 
     $L(E,\chi,1)\ne 0$,

we have
\[(\iota^{-1}(\sL(E,\chi)))_p=\prod\limits_\theta(h_\theta(\chi(\tau)^{d/d_\theta}),p),\]
where 
$(\sL(E, \chi))_p$ is the part above $p$ in the
   factorization of the principal ideal \newline $(\iota^{-1}(\sL(E,\chi)))\subset \bbZ[\zeta_d]$, and 
    for an irreducible representation $\theta$, $h_\theta$ is the minimal polynomial of the matrix associated to $\theta(\tau)$ in a chosen basis of $(\Sha(E/K)[p])_\theta$, $d_\theta$ is the order of the roots of $h_\theta$, and
    $\chi$ is regarded as a homomorphism $\chi: \Gal(K^\chi/\bbQ) \to \mu_d$.

\end{conjecture}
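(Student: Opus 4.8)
Since Conjecture~\ref{exact_question} is, as stated, a conjecture, what follows is not a proof but the strategy by which one would \emph{derive} it from a suitable instance of the Equivariant Tamagawa Number Conjecture (ETNC); equivalently, from the $\bbZ[\zeta_d]$-equivariant refinement of the Dokchitser--Evans--Wiersema formula for $\sL(E,\rho)$ underlying Definition~\ref{twistedLvalue}. The plan is to apply this to the motive $h^1(E)(1)\otimes\chi$ over $\bbQ$ with coefficients in $\bbQ(\zeta_d)$. First one fixes the embedding $\iota\colon\bbQ(\zeta_d)\hookrightarrow\bbC$ compatibly with the analytic normalisation of Definition~\ref{twistedLvalue}: this is the step where the factors $\sqrt{\mathfrak{f}_\chi}$, $w_\chi$ and $\Omega_{\pm}^{d^{\pm}(\chi)}$ must be matched, via Deligne's period conjecture, against the motivic periods of $h^1(E)(1)\otimes\chi$, and it is this matching that pins down the consistent choice across the Galois conjugates $\chi^j$. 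Having done so, $\iota^{-1}(\sL(E,\chi))$ is (conjecturally) an element of $\bbQ(\zeta_d)$ generating a principal fractional $\bbZ[\zeta_d]$-ideal.

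Next I would write the equivariant conjectural identity for this ideal as a product of a regulator term, a torsion term, a Tamagawa term and the $\Sha$-term. Because $L(E,\chi,1)\neq 0$ the motive has analytic rank $0$, so — granting finiteness of $\Sha$ — it has algebraic rank $0$ and the regulator term is trivial; and since $(p,d)=1$ the group ring $\bbZ_p[\Gal(K^\chi/\bbQ)]$ is semisimple, all idempotents $e_\theta$ lie in it, and the whole identity decomposes blockwise along $\bbZ_p[\Gal(K^\chi/\bbQ)]\cong\prod_{e\mid d}\prod_{\mathfrak{p}\mid p}\bbZ_p[\zeta_e]_{\mathfrak{p}}$. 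The hypotheses $E(K^\chi)[p^\infty]=E(\bbQ)[p^\infty]$ and $v_p(C_{E/K^\chi})=v_p(C_{E/\bbQ})$ then make the torsion and Tamagawa terms $p$-adic units in each nontrivial block, so after localising at $p$ only the $\Sha$-term survives.

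It then remains to identify the $\Sha$-term with $\prod_\theta(h_\theta(\chi(\tau)^{d/d_\theta}),p)$. Using $\Sha(E/K^\chi)[p^\infty]=\Sha(E/K^\chi)[p]$, the equivariant order (or Fitting ideal) occurring in the ETNC is exactly that of the $\bbF_p[\Gal(K^\chi/\bbQ)]$-module $\Sha(E/K^\chi)[p]$, which one reads off blockwise from the decomposition $\Sha(E/K^\chi)[p]\simeq\Sha(E/\bbQ)[p]\oplus\bigoplus_\theta(\Sha(E/K^\chi)[p])_\theta$ coming from \eqref{ndescent}. For each $\theta$ one has $\chi(\tau)^{d/d_\theta}=\zeta_{d_\theta}$, a primitive $d_\theta$-th root of unity (the order of $\chi(\tau)^{d/d_\theta}$ being $d/\gcd(d,d/d_\theta)=d_\theta$); since $p\nmid d$ the $\tau$-action on $(\Sha(E/K^\chi)[p])_\theta$ is semisimple, so $h_\theta$ is a squarefree product of irreducible factors of $\Phi_{d_\theta}\bmod p$, and evaluating $h_\theta$ at $\zeta_{d_\theta}$ detects precisely the primes $\mathfrak{p}\mid p$ of $\bbZ[\zeta_{d_\theta}]$ modulo which $\zeta_{d_\theta}$ is a root of $h_\theta$ — i.e. exactly the $\bbF_p$-irreducible constituents of $\theta$ occurring in $\Sha$. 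Matching this block by block with the ETNC Fitting ideal and transporting back through $\iota$ yields the asserted equality of $p$-parts.

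The hard part is, of course, that the ETNC — equivalently the $p$-part of the conjecture of \cite{dew} — is open in the generality needed here; the strategy above could be made unconditional only when $E$ has complex multiplication and $K^\chi$ lies in a suitable abelian extension of the CM field, where one may hope to invoke the Iwasawa-theoretic results of Rubin and their equivariant refinements due to Bley and others, which dovetails with the paper's use of CM curves for its explicit $p$-descent computations. A secondary subtlety, present even granting ETNC, is the passage from the abstract equivariant determinant to the explicit shape $\prod_\theta(h_\theta(\chi(\tau)^{d/d_\theta}),p)$: since this ideal is squarefree, the running hypotheses must be used once more to guarantee that the relevant $\Sha$-blocks carry no higher multiplicities, and the chosen $\iota$ must be shown compatible with the functional equation simultaneously for all $\chi^j$. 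Lacking a proof of ETNC here, the authors instead verify the identity directly, computing $\Sha(E/K^\chi)[p]$ as a Galois module by visualisation and by $p$-descent (in particular $11$-descent over a $C_5$ field) and comparing with the ideal factorisation of $\iota^{-1}(\sL(E,\chi))$.
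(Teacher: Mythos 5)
Since the statement you were asked about is a conjecture, the paper itself contains no proof of it: its own contribution is partial evidence, namely Theorem \ref{th:mainVisu} and Corollary \ref{co:mainVisu}, which are proved by a congruence argument between the $p$-adic $L$-functions of two $p$-congruent curves in the style of Greenberg--Vatsal \cite{Gb_Vt} combined with visibility \cite{vis7fisher}, together with case-by-case verification (Theorem \ref{Th:MainDesc}) by an explicit $11$-descent over $\bbQ(\zeta_{11})^+$ for CM curves. Your proposal correctly recognizes this and, rather than attempting the impossible, sketches a conditional derivation from an equivariant refinement (ETNC) of the formula behind Definition \ref{twistedLvalue}; that is a genuinely different route from anything in the paper. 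What your route would buy, if ETNC were available, is the full statement uniformly in $E$, $\chi$ and $p$, and your blockwise decomposition of $\bbZ_p[\Gal(K^\chi/\bbQ)]$ together with the use of the torsion and Tamagawa hypotheses to trivialize all terms except the $\Sha$-term is a faithful account of why the conjecture has the shape it does. What it does not buy is anything checkable today: even in the CM setting you invoke, the needed equivariant input is not known in the form required, whereas the paper's $p$-adic-$L$-function-plus-visibility argument and its descent computations give unconditional (or GRH-conditional) verifications precisely in the first case where the factorization is genuinely ambiguous ($p=11$, $d=5$). Two points in your sketch deserve flagging as more than ``secondary subtleties'': an equivariant determinant or Fitting ideal records characteristic-polynomial (multiplicity) data, while the right-hand side of Conjecture \ref{exact_question} is assembled from the \emph{minimal} polynomials $h_\theta$ on isotypic pieces, so the final matching step needs an argument (you gesture at it, but the hypotheses you cite do not obviously force multiplicity one); and the self-duality constraint coming from the Cassels--Tate pairing, which the paper isolates in Proposition \ref{ctp_consequence} and actually uses to cut the search space, is the structural fact that makes the conjectural ideal stable under complex conjugation --- your ETNC framework should, but does not explicitly, reproduce it.
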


    We note that in the context of the above conjecture, the first non-trivial case to examine is that of a character of order 5 and 11-torsion in $\Sha(E/K)$. This is because for lower order characters and smaller torsion in $\Sha(E/K)$, there is no choice in the factorization of $\sL(E/\bbQ, \chi,s)$. 
We use the methods of visualization and $p$-descent to prove the following results towards Conjecture \ref{exact_question} in this first non-trivial case.

\begin{theorem}[Theorem \ref{th:mainVisu}]
Let $\chi$ be a primitive Dirichlet character of order $d$ with conductor $\frak{f}_\chi$. Let $E_1/\bbQ$ and $E_2/\bbQ$ be two $p$-congruent elliptic curves \emph{(}i.e., $E_1[p]\simeq E_2[p]$ as Galois modules\emph{)}. For $i\in\{1,2\}$, assume that $E_i$ has good ordinary reduction at $p$, $E_i[p]$ is irreducible, $(\frak{f}_{\chi},\frak{f}_{E_1}\frak{f}_{E_2})=1$, $L(E_1,\chi,1)\ne0$, $L(E_1,1)\ne0$, and $L(E_2,\chi,1)=0$. 

Furthermore, for each ideal $\frak{p}\subset \bbZ[\zeta_d]$ above $p$ with complex conjugate $\overline{\frak{p}}$,  assume that $v_\frak{p}(P_\ell(E_1,\chi, 1))=0$ for all $\ell\ |\ \frak{f}_{E_1}\frak{f}_{E_2}$ or $v_{\overline{\frak{p}}}(P_\ell(E_1,\chi, 1))=0$ for all $\ell\ |\ \frak{f}_{E_1}\frak{f}_{E_2}$.
Then \[(p)\ |\ (\sL(E_1,\chi))_p.\]
\end{theorem}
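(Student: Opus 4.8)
The plan is to use the congruence $E_1[p]\simeq E_2[p]$ to convert the vanishing $L(E_2,\chi,1)=0$ into a $\frp$-adic divisibility for the algebraic twisted $L$-value of $E_1$, for each prime $\frp\mid p$ of $\bbZ[\zeta_d]$, and then to assemble these into $(p)\mid(\sL(E_1,\chi))_p$. First I would reduce to a single prime. Fix the embedding $\iota$ and, via $\iota$, regard $\sL(E_1,\chi)$ as an element of $\bbQ(\zeta_d)$; since $L(E_1,\chi,1)\ne0$, the order $r$ of Definition~\ref{twistedLvalue} is $0$, so $\sL(E_1,\chi)$ is the normalized central value. Because $L(E_2,\bar\chi,1)=\overline{L(E_2,\chi,1)}=0$, $\sL(E_1,\bar\chi)=\overline{\sL(E_1,\chi)}$, and the Euler-factor hypothesis is symmetric under $\frp\leftrightarrow\overline{\frp}$, it is enough to prove: for every $\frp\mid p$ with $v_\frp(P_\ell(E_1,\chi,1))=0$ for all $\ell\mid\mathfrak{f}_{E_1}\mathfrak{f}_{E_2}$, one has $v_\frp(\sL(E_1,\chi))\ge1$. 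Each conjugate pair of primes above $p$ contains such a $\frp$, and applying the claim to $\bar\chi$ handles the other member; running over all $\frp\mid p$ then gives $(p)\mid(\sL(E_1,\chi))_p$ (at least when $p$ is unramified in $\bbQ(\zeta_d)$, e.g.\ whenever $p\nmid d$, in particular in the case $p=11$, $d=5$ of interest).

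Next I would pass to modular symbols. Let $f_i$ be the newform of $E_i$. I would rewrite the twisted central value of $E_i$ as a modular-symbol sum over residues modulo $\mathfrak{f}_\chi$, normalized by the canonical cohomological periods $\Omega_{f_i}^{\pm}$ rather than the N\'eron periods $\Omega_\pm(E_i)$ of Definition~\ref{twistedLvalue}. The good ordinary reduction of $E_i$ at $p$ together with irreducibility of $E_i[p]$ makes the period ratios $\Omega_\pm(E_i)/\Omega_{f_i}^{\pm}$ into $\frp$-units (a congruence-module computation); combined with the Gauss-sum and conductor factors of Definition~\ref{twistedLvalue} (which are $\frp$-units by the coprimality hypotheses) and with $L(E_1,1)\ne0$ pinning down the trivial-character factor, this produces elements $\cL_i\in\bbZ_{(p)}[\zeta_d]$ with $v_\frp(\sL(E_1,\chi))=v_\frp(\cL_1)$ and with $\cL_2$ proportional to $L(E_2,\chi,1)$, hence $\cL_2=0$.

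I would then invoke the congruence. Since $E_1[p]\simeq E_2[p]$ as $\Gal(\overline{\bbQ}/\bbQ)$-modules, $f_1\equiv f_2\pmod{\frp}$; and because $E_i$ is good ordinary at $p$ with $E_i[p]$ irreducible, there is a matching congruence of $p$-stabilized modular symbols in the canonical normalization. Feeding this into the twisted sums yields $\cL_1\equiv u\,\cL_2\cdot\prod_{\ell\mid\mathfrak{f}_{E_1}\mathfrak{f}_{E_2}}(\text{local factor at }\ell)\pmod{\frp}$, where $u$ is a $\frp$-unit and the product corrects for the differing levels of $E_1$ and $E_2$, each local factor being a $\frp$-unit multiple of $P_\ell(E_1,\chi,1)$. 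The hypothesis $v_\frp(P_\ell(E_1,\chi,1))=0$ makes these local factors $\frp$-units, so $\cL_1\equiv0\pmod{\frp}$, whence $v_\frp(\sL(E_1,\chi))=v_\frp(\cL_1)\ge1$; with the reduction of the first paragraph this is the theorem.

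The hard part will be the period and local bookkeeping of the last two paragraphs: proving that the N\'eron-to-canonical period ratios are $\frp$-units --- this is where irreducibility of $E_i[p]$ and the ordinary hypothesis are genuinely needed, via level-raising and Ihara-type arguments --- and matching the bad-prime correction factors precisely to the Euler polynomials $P_\ell(E_1,\chi,1)$, that is, upgrading ``$E_1[p]\simeq E_2[p]$'' to an explicit congruence of twisted modular symbols with the exact fudge factors. The same conclusion admits a \emph{visibility} reformulation, presumably the conceptual source: $E_1[p]\simeq E_2[p]$ yields an abelian surface $A$ fitting into $0\to E_1\to A\to E_2\to0$, the vanishing $L(E_2,\chi,1)=0$ forces a nonzero class in the $\chi$-isotypic part of $\Sha(E_1/K^\chi)[\frp]$, and combining this with $\Reg_{E_1/K^\chi}=1$ --- which follows from $L(E_1,1)\ne0$ and, in the relevant prime-order case, from the non-vanishing of the Galois conjugates $L(E_1,\chi^j,1)$ --- with the Tamagawa and torsion hypotheses $v_p(C_{E_1/K^\chi})=v_p(C_{E_1/\bbQ})$ and $E_1(K^\chi)[p^\infty]=E_1(\bbQ)[p^\infty]$, and with the $\frp$-part of Conjecture~\ref{BSD} for the twist $E_1\otimes\chi$ (known in analytic rank $0$ under precisely such ordinary and residual-irreducibility hypotheses), one again obtains $v_\frp(\sL(E_1,\chi))\ge1$; the modular-symbol route is preferable in that it needs no unproven instance of BSD.
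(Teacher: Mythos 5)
Your congruence core (paragraphs two and three) is, in substance, the route the paper takes: the paper does not redo the modular-symbol and canonical-period bookkeeping but imports it from Greenberg--Vatsal \cite{Gb_Vt}, whose theorem on the $\Sigma$-imprimitive $p$-adic $L$-functions of $p$-congruent curves (good ordinary at $p$, irreducible residual representation) is proved by exactly the Vatsal-style congruence you sketch; the ``hard part'' you flag (N\'eron versus canonical periods being $\frak{p}$-units, bad-prime corrections matching $P_\ell(E_1,\chi,1)$) is precisely what that citation supplies. So that portion is an acceptable outline, differing from the paper only in proposing to re-derive the cited input.

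The genuine gap is in your first-paragraph reduction. The theorem's Euler-factor hypothesis is a disjunction: for a given pair $(\frak{p},\overline{\frak{p}})$ you may only know $v_{\frak{p}}(P_\ell(E_1,\chi,1))=0$ for all $\ell$, while $v_{\overline{\frak{p}}}(P_{\ell_0}(E_1,\chi,1))>0$ for some $\ell_0$. Your claim then gives $v_{\frak{p}}(\sL(E_1,\chi))\ge 1$, but ``applying the claim to $\bar\chi$'' cannot produce the missing divisibility at $\overline{\frak{p}}$: since $P_\ell(E_1,\bar\chi,1)=\overline{P_\ell(E_1,\chi,1)}$ and $\sL(E_1,\bar\chi)=\overline{\sL(E_1,\chi)}$, the only instance whose hypothesis is verified is $\bar\chi$ at $\overline{\frak{p}}$, and its conclusion $v_{\overline{\frak{p}}}(\sL(E_1,\bar\chi))\ge 1$ is identical to $v_{\frak{p}}(\sL(E_1,\chi))\ge 1$, i.e.\ the divisibility you already had; applying the claim to $\bar\chi$ at $\frak{p}$ would require exactly the other disjunct, which is not assumed. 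So your argument only yields divisibility by one member of each conjugate pair, which is strictly weaker than $(p)\mid(\sL(E_1,\chi))_p$. The missing ingredient, and the paper's actual final step, is the reality statement of Theorem \ref{MainTh}: $\zeta\cdot\sL(E_1,\chi)\in\bbR$ for an explicit root of unity $\zeta$, so $\overline{\sL(E_1,\chi)}$ is a unit multiple of $\sL(E_1,\chi)$, the ideal $(\sL(E_1,\chi))$ is stable under complex conjugation, and $v_{\frak{p}}(\sL(E_1,\chi))\ge 1$ forces $v_{\overline{\frak{p}}}(\sL(E_1,\chi))\ge 1$. You need to invoke this (or an equivalent functional-equation argument relating $\sL(E_1,\chi)$ to $\sL(E_1,\bar\chi)$ up to a unit, not merely the tautology $\sL(E_1,\bar\chi)=\overline{\sL(E_1,\chi)}$) to close the argument.
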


\begin{corollary}[Corollary \ref{co:mainVisu}]
If furthermore we assume that Conjecture \ref{BSD}(1) holds for $E_1$ and $E_2$ over $K^\chi$ and $\bbQ$ and that 
$\frac{E_2(K^\chi)}{pE_2(K^\chi)}\subset \Sha(E_1/K^\chi)[p]$ under the visualization map \emph{(}equation \eqref{vis_map}\emph{)}, then \[(p)\ |\ \prod_\theta(h_\theta(\chi(\tau)^{d/d_\theta}),p),\] where $h_\theta$, $d_\theta$, and $\tau$ are as in Conjecture \ref{exact_question}.
\end{corollary}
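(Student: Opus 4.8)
The plan is to bypass the twisted $L$-value entirely and establish the divisibility directly on the Galois-module side, using only the two extra hypotheses (and not the conclusion of the Theorem). Write $G=\Gal(K^\chi/\bbQ)=\langle\tau\rangle$, cyclic of order $d$ coprime to $p$, so that $\bbF_p[G]$ is semisimple and $\bbZ_{(p)}[G]=\prod_{e\mid d}\bbZ_{(p)}[\zeta_e]$ is a maximal order in $\bbQ_p[G]$ (a finite product of Dedekind domains). The first step is to deduce, from $L(E_2,\chi,1)=0$ together with Conjecture \ref{BSD}(1), that the faithful isotypic component of $E_2(K^\chi)\otimes\bbQ$ is nonzero. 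Indeed $L(E_2/K^\chi,s)=\prod_{j=0}^{d-1}L(E_2,\chi^j,s)$, and the $\phi(d)$ factors with $\gcd(j,d)=1$ are Galois conjugates of $L(E_2,\chi,s)$, hence all vanish at $s=1$; writing $E_2(K^\chi)\otimes\bbQ=\bigoplus_{e\mid d}\bbQ(\zeta_e)^{m_e}$ as a $\bbQ[G]$-module and comparing $\rk(E_2/K^\chi)=\sum_{e\mid d}m_e\phi(e)$ with $\ord_{s=1}L(E_2/K^\chi,s)=\sum_j\ord_{s=1}L(E_2,\chi^j,s)$ via BSD(1) over the subfields of $K^\chi$ (there are only $\bbQ$ and $K^\chi$ when $d$ is prime) forces $m_d=\ord_{s=1}L(E_2,\chi,s)\ge 1$.

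The second step is to pass to the reduction modulo $p$. Because $\bbZ_{(p)}[G]$ is a maximal order, $E_2(K^\chi)\otimes\bbZ_{(p)}$ splits as its $p$-primary torsion plus a projective $\bbZ_{(p)}[G]$-module whose faithful component is free of rank $m_d\ge 1$ over $\bbZ_{(p)}[\zeta_d]$. Since $\bbZ_{(p)}[\zeta_d]/p=\bbF_p[x]/\Phi_d(x)=\prod_{\frak{p}\mid p}\bbF_p[x]/(f_{\frak{p}})$, where the $f_{\frak{p}}$ are the irreducible factors of $\Phi_d$ modulo $p$ --- in bijection with the primes $\frak{p}$ of $\bbZ[\zeta_d]$ above $p$, since $p\nmid d$ --- reducing modulo $p$ shows that $\frac{E_2(K^\chi)}{pE_2(K^\chi)}$ contains, as an $\bbF_p[G]$-module, \emph{every} faithful irreducible $\bbF_p$-representation $\theta$ of $G$. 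The visualization hypothesis embeds this $G$-module into $\Sha(E_1/K^\chi)[p]$, so $(\Sha(E_1/K^\chi)[p])_\theta\ne 0$ for every faithful $\theta$. (That $E_2(K^\chi)[p]=0$, so the torsion plays no role here, follows from irreducibility of $E_2[p]$ and abelianness of $K^\chi/\bbQ$; in any case the torsion only contributes additional summands.)

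The third step records each faithful $\theta$'s contribution to the product. For such a $\theta$ the matrix $\theta(\tau)$ has exact order $d$, so $d_\theta=d$ and $\chi(\tau)^{d/d_\theta}=\chi(\tau)=:\zeta_d$, a fixed primitive $d$-th root of unity in $\bbZ[\zeta_d]$; and $h_\theta$, the minimal polynomial of the irreducible $\theta(\tau)$, is exactly one irreducible factor $f_{\frak{p}_\theta}$ of $\Phi_d$ modulo $p$, with $\theta\mapsto\frak{p}_\theta$ a bijection between faithful $\bbF_p$-irreducibles of $G$ and primes of $\bbZ[\zeta_d]$ above $p$. Evaluating in $\bbZ[\zeta_d]/p=\prod_{\frak{q}\mid p}\bbF_p[x]/(f_{\frak{q}})$, the element $h_\theta(\zeta_d)$ is $0$ in the $\frak{p}_\theta$-component and a unit in the others, whence $(h_\theta(\chi(\tau)^{d/d_\theta}),p)=\frak{p}_\theta$. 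Therefore
\[\prod_{\theta\ \text{faithful}}\bigl(h_\theta(\chi(\tau)^{d/d_\theta}),p\bigr)=\prod_{\frak{p}\mid p}\frak{p}=(p),\]
the last equality because $p$ is unramified in $\bbZ[\zeta_d]$. The remaining factors of $\prod_\theta(h_\theta(\chi(\tau)^{d/d_\theta}),p)$, coming from the non-faithful $\theta$ occurring in $\Sha(E_1/K^\chi)[p]$, are ideals of $\bbZ[\zeta_d]$, so the whole product is contained in $(p)$; that is, $(p)\mid\prod_\theta(h_\theta(\chi(\tau)^{d/d_\theta}),p)$, as claimed.

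The step I expect to be the main obstacle is the integral descent in the second paragraph: one must ensure that reducing the Mordell--Weil lattice modulo $p$ really does produce \emph{every} faithful $\bbF_p$-irreducible of $G$ --- i.e.\ that a single faithful $\bbQ$-irreducible ``spreads out'' over all the residue fields of $\bbZ[\zeta_d]$ at $p$ --- and that the visualization map is $G$-equivariant and injective on the submodule in question. Both points rest on the coprimality $(p,d)=1$, which makes $\bbZ_{(p)}[G]$ a maximal order with integral idempotents. A secondary subtlety, which disappears when $d$ is prime (the paper's case of interest), is that for composite $d$ one needs BSD(1) over the intermediate subfields of $K^\chi/\bbQ$, not just over $K^\chi$ and $\bbQ$, to pin down the faithful multiplicity $m_d$.
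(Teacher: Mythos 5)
Your argument follows the same route as the paper's own proof in its main case: use $L(E_2,\chi,1)=0$ plus BSD(1) to force growth of $E_2(K^\chi)\otimes\bbQ$ beyond $E_2(\bbQ)\otimes\bbQ$, observe that the faithful rational irreducible reduces mod $p$ to \emph{all} irreducible factors of $\Phi_d \bmod p$ (with equal multiplicity), push this into $\Sha(E_1/K^\chi)[p]$ via the assumed $G$-equivariant visualization embedding, and conclude $(p)=\prod_{\frak p\mid p}\frak p$ divides the product of the ideals $(h_\theta(\chi(\tau)),p)$. Your second and third steps are in fact more carefully justified than the paper's (the splitting over the maximal order $\bbZ_{(p)}[G]$ and the identification $(h_\theta(\zeta_d),p)=\frak p_\theta$ are exactly the points the paper waves at with ``occurs with same multiplicity''), and for $d$ prime -- the case relevant to all of the paper's numerics -- your proof is complete.

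The genuine gap is the case, allowed by the statement, of composite squarefree $d$ in which the Mordell--Weil growth of $E_2$ is \emph{not} faithful, i.e.\ occurs only over a proper subfield $\bbQ\ne F\subsetneq K^\chi$. With BSD(1) assumed only over $K^\chi$ and $\bbQ$ (as in the hypotheses) you cannot conclude $m_d\ge 1$: the order of vanishing of $L(E_2/K^\chi,s)$ could be accounted for entirely by intermediate characters, so your first step breaks down, and with it the claim that every faithful $\theta$ occurs in $\Sha(E_1/K^\chi)[p]$. You flag this, but your proposed remedy (assume BSD(1) over the intermediate subfields) proves a different statement with stronger hypotheses. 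The paper instead closes this case by restricting $\chi$ to $\Gal(F/\bbQ)\simeq\langle\tau^{d/d'}\rangle$ and rerunning the same argument over $F$: the subrepresentations produced then have $d_\theta=d'<d$, and the product $\prod_\theta(h_\theta(\chi(\tau)^{d/d_\theta}),p)$ over all irreducible factors of $\Phi_{d'}\bmod p$ still equals $(p)$, since $\chi(\tau)^{d/d'}$ is a primitive $d'$-th root of unity and $p\nmid d$. This subfield argument is precisely the reason the exponent $d/d_\theta$ appears in the conclusion; your proof, which only ever evaluates $h_\theta$ at $\chi(\tau)$ itself for faithful $\theta$, does not explain or use it. Adding the restriction-to-$F$ step would make your argument match the full statement.
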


We use Theorem \ref{th:mainVisu}
and Corollary \ref{co:mainVisu} to prove Conjecture \ref{exact_question} for the elliptic curve with LMFDB-label ``9450du1''. 

However, this method limits the Galois module structure of $\Sha(E/K)[p]$ as explained in Remark \ref{rem:Vis_not_enough}. Therefore, to verify Conjecture \ref{exact_question} when other Galois module structures occur, we turn to the method of $p$-descent and develop a procedure to achieve an 11-descent over a $C_5$ number field for an elliptic curve over $\bbQ$ with complex multiplication (CM).

\begin{theorem}\label{Th:MainDesc}
    Assuming the Generalized Riemann's Hypothesis, Conjecture \ref{exact_question} holds for the elliptic curves
    with LMFDB labels
    ``5776.i1'', ``6400.a1'', ``7056.bg1'', 
    ``16641.g1'', ``57600.ch1'', ``90601.c1'', ``215296.c1'', ``461041.h1'', ``499849.d1''  with p=11 and all primitive Dirichlet characters factoring through $K^\chi= \bbQ(\zeta_{11})^+$.
    \begin{center}
    \begin{tabular}{|c|c|}
 \hline
           Label& Model\\
         \hline\hline
          5776.i1& $y^2=x^3-219488x+39617584$\\
          \hline
       6400.a1  & $y^2=x^3+x^2-333x+1963$ \\
       \hline
        7056.bg1 & $y^2=x^3-262395x +51731946$ \\
        \hline
        16641.g1 & $y^2+y=x^3-14311260x+20838446795$\\
        \hline
       57600.ch1  & $y^2=x^3-3000x+56000$\\
       \hline
       90601.c1 & $y^2+y=x^3-77916860x+ 264725453732$ \\
       \hline
       215296.c1 & $y^2=x^3+x^2-11213x+400939$\\
       \hline
       461041.h1 & $y^2+xy=x^3-x^2-17144962x+ 27327405657$\\
       \hline
       499849.d1 & $y^2+xy+y=x^3-x^2-18588135x+30849251024$\\
       \hline
    \end{tabular}
    \end{center}
\end{theorem}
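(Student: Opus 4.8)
The plan is to verify Conjecture~\ref{exact_question} for each of the nine curves by computing both sides of the conjectural ideal equality independently and confronting them. Throughout $d=5$: the nontrivial characters of $\Gal(\bbQ(\zeta_{11})^+/\bbQ)\cong C_5$ all have order $5$ and conductor $11$, and they form a single $\Gal(\bbQ(\zeta_5)/\bbQ)$-orbit, so it suffices to fix one representative $\chi$ and one embedding $\iota:\bbQ(\zeta_5)\hookrightarrow\bbC$, verify the identity for the pair $(\chi,\iota)$, and note that both sides transform compatibly under the $\Gal(\bbQ(\zeta_5)/\bbQ)$-action, whence the other characters follow. Since $11\equiv1\pmod5$ the prime $11$ splits completely in $\bbZ[\zeta_5]$ into four degree-one primes, and every irreducible $\bbF_{11}$-representation of $C_5$ is one-dimensional; hence all the polynomials $h_\theta$ occurring in the conjecture are linear, and the right-hand side $\prod_\theta\bigl(h_\theta(\chi(\tau)^{5/d_\theta}),11\bigr)$ is a product of primes above $11$, one for each nontrivial character appearing in $\Sha(E/K^\chi)[11]$.

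\textbf{Analytic side.} For each curve $E$ in the table, compute $L(E,\chi,1)$ to high precision by the standard algorithms for twisted $L$-values (cf.\ \cite{Tim_L_function}), with a rigorous tail bound, and in particular certify $L(E,\chi,1)\ne0$ so that the hypotheses of Conjecture~\ref{exact_question} are non-vacuous. Assemble $\sL(E,\chi)$ from Definition~\ref{twistedLvalue}: here $r=0$, $\frak{f}_\chi=11$, $w_\chi$ is the explicit root number of $\chi$, and $d^+(\chi)=1$, $d^-(\chi)=0$ since $\chi$, having odd order, is even; thus $\sL(E,\chi)=L(E,\chi,1)\sqrt{11}/(\Omega_+(E)\,w_\chi)$. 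Recognize $\iota^{-1}(\sL(E,\chi))$ as an element of $\bbZ[\zeta_5]$ --- it lies there by the rationality and integrality of algebraic $L$-values (cf.\ \cite{dew}); in practice one locates it by lattice reduction and confirms it algebraically --- and factor the principal ideal $(\iota^{-1}(\sL(E,\chi)))$, extracting its part $(\sL(E,\chi))_{11}$ above $11$, a product of the four split primes with explicit exponents.

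\textbf{Algebraic side.} Apply the procedure developed in this paper to carry out an $11$-descent on $E$ over $K^\chi=\bbQ(\zeta_{11})^+$, exploiting the complex multiplication of $E$ to keep the auxiliary number fields --- over which class groups, unit groups and $S$-unit groups must be computed --- of manageable degree; these class and unit group computations are carried out under GRH via Bach's bounds. This yields $\Sel^{(11)}(E/K^\chi)$ together with its $\Gal(K^\chi/\bbQ)=\langle\tau\rangle$-action. Determine $\rk(E/K^\chi)$: here it is $0$, already forced by $L(E/K^\chi,1)\ne0$ (a product of the nonzero twisted values) together with Rubin's theorem for CM curves, so that by \eqref{ndescent} in fact $\Sha(E/K^\chi)[11]=\Sel^{(11)}(E/K^\chi)$. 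Decompose this as an $\bbF_{11}[\langle\tau\rangle]$-module, split off the rational part $\Sha(E/\bbQ)[11]$ (which one checks has trivial $11$-part), and read off from each nontrivial isotypic piece the minimal polynomial $h_\theta$ of $\tau$ and the order $d_\theta$ of its roots. Finally verify the remaining hypotheses of Conjecture~\ref{exact_question}: $E(K^\chi)[11^\infty]=E(\bbQ)[11^\infty]$ and $v_{11}(C_{E/K^\chi})=v_{11}(C_{E/\bbQ})$ (checked directly --- both $11$-adic valuations in fact vanish), and $\Sha(E/K^\chi)[11^\infty]=\Sha(E/K^\chi)[11]$, which follows on matching $\dim_{\bbF_{11}}\Sel^{(11)}(E/K^\chi)$ with the $11$-adic valuation of the analytic $\BSD(E/K^\chi)$ of Conjecture~\ref{BSD}(2) (licit here, as $\Sha$ is finite and the $11$-part of the formula is known for CM curves), or else by checking that a second $11$-descent produces no new classes.

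\textbf{Comparison and main obstacle.} Form the ideal $\prod_\theta\bigl(h_\theta(\chi(\tau)^{5/d_\theta}),11\bigr)\subset\bbZ[\zeta_5]$ --- each factor being the unit ideal or one of the four split primes, according to whether $\chi(\tau)^{5/d_\theta}$ reduces to a root of $h_\theta$ modulo that prime --- and check that it equals $(\sL(E,\chi))_{11}$ from the analytic side; by the equivariance noted above, this then holds for every primitive character through $\bbQ(\zeta_{11})^+$, proving the theorem. The bottleneck is the $11$-descent over a quintic field: a straightforward $11$-descent would require arithmetic in $K^\chi(E[11])$, of degree up to $5\cdot\#G$ with $G\subseteq\GL_2(\bbF_{11})$ the mod-$11$ image of $\Gal(\overline{\bbQ}/\bbQ)$ --- far beyond the reach of class group algorithms, even under GRH. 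Using the CM to cut this down --- in the favourable cases to a one-dimensional descent along a $\frak{p}$-isogeny over fields of degree only a few tens --- is what makes the computation terminate, and organizing it (the choice of isogeny, the relevant $S$-unit groups, and the local conditions at the primes of $K^\chi$ above $11$ and above $\frak{f}_E$) is the technical heart of the argument. The reason the statement is conditional on GRH is precisely that these class-group and unit-group computations rest on Bach's GRH-conditional bounds.
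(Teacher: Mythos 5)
Your proposal takes essentially the same route as the paper: the analytic side is pinned down via the numerical twisted $L$-value together with Theorem \ref{MainTh} and the explicit splitting of $11$ in $\bbZ[\zeta_5]$, and the algebraic side is obtained from the CM-based $11$-descent over $\bbQ(\zeta_{11})^+$ (Algorithm \ref{alg:cap}, with the norm-relation reduction to degree-$20$ fields $F_i$, Propositions \ref{U(LK)U(Fi)}, \ref{reduce-S} and \ref{ctp_consequence}), with GRH entering exactly through the class-group and unit-group computations; the two sides are then compared, and Galois-equivariance handles all characters at once. This matches the paper's verification (carried out in detail for ``7056.bg1'' and for the remaining curves in the accompanying code), so the approach is correct and not genuinely different.
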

To achieve Theorem \ref{Th:MainDesc} we developed the following algorithm to perform a $p$-descent over a $C_q$ extension $K/\bbQ$ under certain assumptions. See Algorithm \ref{alg:cap} for the complete procedure.

\begin{algorithm}[h]
\caption{}\label{alg:cap}
\begin{algorithmic}
\Require \thmbreakheader\begin{enumerate} \item Elliptic curve $E/\bbQ$ with complex multiplication by order $\cO$ and a prime $p$ such that $p$ splits in $\cO$.
\item A cyclic extension $K/\bbQ$ of prime degree $q$ such that $q\ |\ p-1$.
\end{enumerate}
\Ensure The minimal polynomial $h_\theta$ and the eigenspace $(\Sel_p(E/K))_\theta$.
\end{algorithmic}
\end{algorithm}

In theory, the descent procedure developed in this article works for  an arbitrary prime $p$ that splits in the order $\cO$ by which $E/\bbQ$ has CM, and when $[K:\bbQ]=q$ is a prime such that $q\ |\ p-1$. In practice however, one is limited by computer power. Although we expect that one can work with $p=31$ and $q=5$.   
    
\subsection{Related work}
Our work originates from Remark 43 in \cite{dew}, where predicting the factorization of $L(E/\bbQ,\chi,1)$ is mentioned in the following context.

Let $E/\bbQ$ be an elliptic curve and $K$ be a number field. Artin's formalism gives
\[L(E/K,s)=\prod\limits_\rho L(E/\bbQ, \rho, s),\]
where $\rho$ runs through the Artin representations of $K$. 
Conjecturally, one expects that \[\ord_{s=1}L(E/\bbQ,\rho, s)=\langle\rho, E(K)\otimes_{\bbZ}\bbC\rangle,\] 
where the right hand side is the multiplicity of $\rho$ in $E(K)\otimes_{\bbZ}\bbC$ as a representation of $\Gal(K/\bbQ)$ (\cite{rohrlich}). This suggests that the conjectural BSD-type formula for $L(E/\bbQ, \rho,s)$ must take into account the Galois module structure on the objects appearing in $\BSD(E/K)$.
The following theorem provides a partial answer. 

\begin{theorem}[Theorem 38 in \cite{dew}]\label{MainTh} 
  Let $E/\bbQ$ be an elliptic curve with conductor $\frak{f}_E$. Suppose Steven's Manin constant conjecture holds for $E/\bbQ$. 
Let $\chi$ be a non-trivial primitive Dirichlet character of order $d$ and  conductor coprime to $\frak f_E$. Then $\mathscr{L}(E,\chi) \in \bbZ[\zeta_d]$ and, if $L(E,\chi, 1) \ne 0$, then furthermore 
  \[\zeta\cdot \mathscr{L}(E,\chi) \in \bbR, \qquad \text{for } \zeta = \chi(\mathfrak f_E)^{\frac{d+1}{2}}\sqrt{\chi(-1)w_E}.
  \]
If $\rk(E/\bbQ)=0$ and the BSD conjecture holds for $E$ over $\bbQ$ and $K^{\chi}$ with $d$ squarefree, then 

  \begin{equation}\label{norm_bsd}
N_{\bbQ(\zeta_d)^+/\bbQ}(\zeta\cdot \mathscr{L}(E,\chi))=\pm \frac{|E(\bbQ)_{\tors}|}{|E(K^{\chi})_{\tors}|}\sqrt{\frac{|\Sha_{E/K^{\chi}}|\prod_vc_v(E/K^{\chi})}{|\Sha_{E/\bbQ}|\prod_pc_p(E/\bbQ)}}.
\end{equation}
If moreover $d$ is odd and $\BSD(E/K^{\chi})=\BSD(E/\bbQ)$ then $\mathscr{L}(E,\chi) 
  = \zeta^{-1}u$ for some unit $u \in \mathcal{O}^{\times}_{\bbQ(\zeta_d)^+}$.
\end{theorem}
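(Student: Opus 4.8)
Since this statement is Theorem~38 of \cite{dew}, I only sketch the route one would take.

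For the first assertion, that $\sL(E,\chi)\in\bbZ[\zeta_d]$, the plan is to compute $\sL(E,\chi)$ by modular symbols. The Birch--Stevens formula expresses $L(E,\chi,1)/\Omega_{\bullet}(E)$, with $\bullet\in\{+,-\}$ dictated by $\chi(-1)$, up to a power of the Gauss sum $\tau(\chi)$ and a sign, as the finite twisted sum $\sum_{a\bmod\frak{f}_\chi}\chi(a)\,[a/\frak{f}_\chi]^{\pm}$ of modular symbols attached to $E$. Once divided by the N\'eron period, these modular symbols lie in $\tfrac1{c_E}\bbZ$, where $c_E$ is the Manin constant; Stevens' conjecture forces $c_E$ to be a unit, so the sum lies in $\bbZ[\zeta_d]$, and since $|\tau(\chi)|=\sqrt{\frak{f}_\chi}$, folding in the normalising factor $\sqrt{\frak{f}_\chi}/(\Omega_+^{d^+}|\Omega_-|^{d^-}w_\chi)$ of Definition~\ref{twistedLvalue} turns it into an algebraic integer.

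For the second assertion I would combine the functional equation of $L(E,\chi,s)$ with complex conjugation. Conjugating Dirichlet coefficients gives $\overline{\sL(E,\chi)}=\sL(E,\bar\chi)$, while the functional equation relating $L(E,\chi,s)$ and $L(E,\bar\chi,2-s)$ expresses $\sL(E,\bar\chi)$ as $\sL(E,\chi)$ times an explicit root of unity built out of $w_E$, $\chi(-1)$ and $\chi(\frak{f}_E)$. The element $\zeta=\chi(\frak{f}_E)^{(d+1)/2}\sqrt{\chi(-1)w_E}$ is, by construction, a square root of that root of unity, so $\overline{\zeta\sL(E,\chi)}=\zeta\sL(E,\chi)$, i.e.\ $\zeta\sL(E,\chi)\in\bbR$. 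For the third assertion I would invoke Artin formalism $L(E/K^\chi,s)=L(E/\bbQ,s)\prod_{\psi\ne1}L(E/\bbQ,\psi,s)$, the product over the nontrivial characters $\psi$ of $\Gal(K^\chi/\bbQ)$, apply Conjecture~\ref{BSD}(2) to $E$ over $K^\chi$ and over $\bbQ$, and divide the two leading-term identities. Here $\rk(E/\bbQ)=0$ and $L(E,\chi,1)\ne0$ make the regulators trivial, the conductor--discriminant formula gives $|\Delta_{K^\chi}|=\prod_\psi\frak{f}_\psi$, and the signature $(r_1,r_2)$ of $K^\chi$ satisfies $r_1+r_2=\sum_\psi d^+(\psi)$ and $r_2=\sum_\psi d^-(\psi)$, so the period contributions in \eqref{BSDQuo} and in Definition~\ref{twistedLvalue} match term by term. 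Collecting the surviving terms, grouping the $\psi$ into $\Gal(\bbQ(\zeta_d)/\bbQ)$-conjugacy classes and isolating the class of $\chi$ (by induction on $d$, or M\"obius inversion over the proper subfields of $K^\chi$; when $d$ is prime, as in the application, that class is everything), and feeding in the reality normalisation of the second assertion, one arrives at \eqref{norm_bsd} with $N_{\bbQ(\zeta_d)^+/\bbQ}(\zeta\sL(E,\chi))$ on the left.

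Finally, for the fourth assertion, when $d$ is odd the character $\chi$ is even, $K^\chi$ is totally real and only $\Omega_+$ occurs, which streamlines the signature and period bookkeeping above; if moreover $\BSD(E/K^\chi)=\BSD(E/\bbQ)$, the right-hand side of \eqref{norm_bsd} collapses to $\pm1$. Then $\zeta\sL(E,\chi)$ is real by the second assertion and an algebraic integer by the first, so it lies in the ring of integers of $\bbQ(\zeta_d)^+$ and has norm $\pm1$ down to $\bbQ$, hence is a unit $u$; therefore $\sL(E,\chi)=\zeta^{-1}u$. The main obstacle is the first assertion --- controlling the denominators of the twisted modular symbols, which is exactly where Stevens' Manin constant conjecture enters --- and, secondarily, the bookkeeping in the third assertion, where one must attach each of $\Omega_\pm$, each Tamagawa contribution, and each torsion factor to the correct conjugate twist across the Artin factorisation.
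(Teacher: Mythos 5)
This theorem is quoted verbatim from Theorem~38 of \cite{dew}; the paper supplies no proof of its own, only the remark that the integrality claim also follows from Theorem~2 of \cite{WW} once the conductor of $\chi$ is coprime to $\frak{f}_E$. Your sketch --- Birch--Stevens modular symbols with Stevens' Manin-constant conjecture controlling denominators for integrality, conjugation through the functional equation for the reality of $\zeta\cdot\sL(E,\chi)$, and Artin formalism plus BSD over $K^\chi$ and $\bbQ$ (conductor--discriminant formula, matching of periods, grouping of Galois-conjugate twists) for \eqref{norm_bsd} and the unit statement --- follows essentially the same route as the cited proof, with only the inclusion--exclusion bookkeeping for squarefree non-prime $d$ left schematic.
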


\begin{remark}
We note that if one assumes the conductor of $\chi$ to be coprime to that of $E$, the integrality of $\sL(E,\chi)$ above follows from Theorem 2 in \cite{WW}, hence does not depend on the Birch and Swinnerton-Dyer conjecture. 
\end{remark}

The following example (Example 42 in \cite{dew}) exhibits the limitations of the above theorem regarding the Galois module structure of $\Sha(E/K)$. 

Consider the elliptic curves
$$
E_1/\bbQ : y^2 +y=x^3-x^2-1 \quad \text{ and } \quad E_2/\bbQ : y^2 + xy = x^3+x^2-3x-4,
$$ with conductors 291 and 139 respectively. Let $\chi$ be the primitive character of order 5 and conductor 31 defined by $\chi(3) = \zeta_5^3$. For both curves, we have 
    \[
|E_i(\bbQ)| = |E_i(K^{\chi})| = |\Sha_{E_i/\bbQ}| = \prod_pc_p(E_i/\bbQ) = \prod_v
c_v(E_i/K^\chi)=1
\]
and $|\Sha_{E_i/K^{\chi}}|=11^2.$
One computes the following ideal factorization
    \[ 
      (\sL(E_1,\chi)) = \frak p_1 \overline{\frak p_1} \quad \text{ and } (\sL(E_2,\chi)) = \frak p_2 \overline{\frak p_2},
    \]
where $\frak p_1, \overline{\frak p_1}, \frak p_2, \overline{\frak p_2}$ are distinct ideals above 11, with $\overline{\frak p_i}$ the image of $\frak p_i$ under complex conjugation.

Theorem \ref{MainTh} predicts that the ideals $\sL(E_i, \chi)$ must be either $\frak p_1 \overline{\frak p_1}$ or $ \frak p_2\overline{\frak p_2}$, but  cannot be used to decide which factorization it is. This prompted Conjecture \ref{exact_question}.




\subsection{Structure of the paper}
Section \ref{Sec_vis} presents the background for the method of visualization and use this method to detail numerical evidence towards Conjecture \ref{exact_question} for the elliptic curve with LMFDB-label ``9450.du1''. Section \ref{Sec:p-descent} presents the necessary background to perform a $p$-descent on an elliptic curve (Section \ref{subbackground}) and present a refinement to obtain a practical method for elliptic curves with complex multiplication (Sections \ref{subpdescentcm}, \ref{classgp_unitgp} and \ref{subgaloismodule}). 
It also presents some optimizations in order to cut down the search space using Cassels-Tate pairing (Proposition \ref{ctp_consequence}). This section ends with the detail of the numerical verification for the elliptic curve ``7056.bg1''.
All codes are available at \cite{github} where all the examples can be found. 

\subsection{Acknowledgements}
Both authors were supported by the first author's Royal Society Dorothy Hodgkin Fellowship. We wish to thank Vladimir Dokchitser, Michael Stoll, Aurel Page and Claus Fieker for helpful discussions and suggestions, and Timo Keller for pointing out the reference \cite{Gb_Vt}, which prompted Theorem \ref{th:mainVisu}.

\section{Numerical evidence using visualization}\label{Sec_vis}
\subsection{Background}
We provide an outline of the visualization method which will be used to provide evidence towards Conjecture \ref{exact_question}. We refer the reader to \cite{vis7fisher} for more details. 

Let $p$ be a prime number and $E_1/K$ and $E_2/K$ be $p$-congruent elliptic curves,
i.e., there exists an isomorphism 
$\sigma:E_1[p]\stackrel{\simeq}{\to} E_2[p]$ of Galois modules.
The idea of visualization is to use the above isomorphism to identify the elements of $\Sha(E_1/K)[p]$ as a 
subgroup of $\frac{E_2(K)}{pE_2(K)}$. 

Let $\Delta$ be the image of $E_1[p]$ under the diagonal embedding $P\mapsto
(P,\sigma(P))$ in $E_1\times E_2$ and let $A\coloneqq E_1\times E_2/\Delta$. 
Define the \emph{visible part} of $E_i$ in $A$ as
\[\Vis_A(E_i)\coloneqq \ker(\H^1(G_K, E_i)\to \H^1(G_K, A)).\]
The natural map 
$E_i\to A/E_j=E_i$ has kernel isomorphic to $\Delta$ for $i \in \{1,2\}$ and gives the following commutative diagram.
\[
  \begin{tikzcd}
    0\arrow[r] &\Delta \arrow[r]\arrow[d] &E_i\arrow [r]\arrow[d] &E_i
    \arrow[r]\arrow[d,equal] &0\\
    0\arrow[r] &E_j\arrow[r] &A\arrow[r] &E_i \arrow[r] &0. 
  \end{tikzcd}
\]
Applying Galois cohomology, one obtains the following exact sequence for 
$i\ne j \in \{1,2\}$
\begin{equation}\label{vis_map}
0\to \frac{A(K)}{E_1(K)+E_2(K)}\to \frac{E_i(K)}{pE_i(K)}\to \Vis_A(E_j)\to 0,
\end{equation}
which leads the following theorem.
\begin{theorem}[Theorem \cite{vis7fisher}, Theorem 2.2]\label{fisher}
  Let $E_i$, $A$ and $\Delta$ be as above and assume that
  $\frac{E_i(K)}{pE_i(K)}=0$.  If
  \begin{enumerate}
  \item all Tamagawa numbers of $E_i$ are coprime to  $p$; and
  \item for all places $v$ above $p$ of $K$, $A$ has good reduction and 
    $e(K_v/\bbQ_p)\leq p-1$,
  \end{enumerate}
  then $\frac{E_j(K)}{pE_j(K)}=\Vis_A(E_i)\cap \Sha(E_i)[p]$.
\end{theorem}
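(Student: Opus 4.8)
The plan is to reduce, via the exact sequence \eqref{vis_map}, to a local triviality statement for the visible group, and then to verify it place by place.

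\emph{Step 1 (reduction).} First I would apply \eqref{vis_map} with the indices as written: the hypothesis $\frac{E_i(K)}{pE_i(K)}=0$ forces $A(K)=E_1(K)+E_2(K)$ (and $\Vis_A(E_j)=0$). Feeding $A(K)=E_1(K)+E_2(K)$ back into \eqref{vis_map} with $i$ and $j$ interchanged makes its left-hand term vanish, so $\frac{E_j(K)}{pE_j(K)}\xrightarrow{\sim}\Vis_A(E_i)$; in particular $\Vis_A(E_i)$ is killed by $p$. Hence it suffices to show $\Vis_A(E_i)\subseteq\Sha(E_i)[p]$, that is, that the localisation $\H^1(K,E_i)\to\H^1(K_v,E_i)$ kills $\Vis_A(E_i)$ for every place $v$ of $K$.

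\emph{Step 2 (places of good reduction for $A$).} Fix $\xi\in\Vis_A(E_i)$. Its image in $\H^1(K_v,E_i)$ lies in $\ker\bigl(\H^1(K_v,E_i)\to\H^1(K_v,A)\bigr)$, which, by the long exact cohomology sequence of $0\to E_i\to A\to E_j\to0$, equals $\operatorname{coker}\bigl(A(K_v)\xrightarrow{\,q\,}E_j(K_v)\bigr)$. If $A$ has good reduction at $v$ (so $E_i$ and $E_j$ do as well), this sequence prolongs to a short exact sequence of abelian schemes over $\cO_{K_v}$; since $\H^1_{\mathrm{fppf}}(\cO_{K_v},\mathcal E_i)\hookrightarrow\H^1(\kappa(v),\overline{\mathcal E_i})=0$ (the last by Lang's theorem over the residue field), $q$ is surjective on $\cO_{K_v}$-points, the local visible group vanishes, and $\res_v\xi=0$. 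By hypothesis (2) this handles all $v\mid p$, and also all but finitely many of the remaining places.

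\emph{Step 3 (the bad places).} There remain the finitely many $v$ at which $A$ has bad reduction; by hypothesis (2) they are prime to $p$. Here $\res_v\xi$ still lies in the finite group $\operatorname{coker}\bigl(A(K_v)\xrightarrow{\,q\,}E_j(K_v)\bigr)$, whose $p$-part is controlled, through the theory of Néron models, by Tamagawa numbers; and $\res_v\xi$ is $p$-torsion, being a quotient of $\frac{E_j(K_v)}{pE_j(K_v)}$. One then uses hypothesis (1) — the Tamagawa numbers of $E_i$ are prime to $p$ — together with the congruence $E_1[p]\cong E_2[p]$, which makes the bad-reduction types of $E_1$ and $E_2$ at each such $v$ compatible, to conclude that this $p$-torsion group is trivial, so that $\res_v\xi=0$. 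Steps 2 and 3 give $\Vis_A(E_i)\subseteq\Sha(E_i)[p]$, and with Step 1 this is the assertion.

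\emph{Expected main obstacle.} The real work is Step 3: controlling the $p$-part of $\operatorname{coker}(q)$ at the finitely many bad $v$ purely in terms of Tamagawa numbers, and arranging that the hypothesis on the Tamagawa numbers of $E_i$ alone is enough even though $E_1$ and $E_2$ need not be isogenous. The bound $e(K_v/\bbQ_p)\le p-1$ in hypothesis (2) is what lets one do the corresponding analysis at $p$: by Raynaud's uniqueness of prolongations of $p$-torsion group schemes in this tame range, $E_1[p]\cong E_2[p]$ holds as finite flat group schemes over $\cO_{K_v}$ for $v\mid p$, which is what makes the good-reduction input of Step 2 usable there and the local conditions at $p$ match up under the congruence. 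Everything else is diagram-chasing with \eqref{vis_map} and standard local duality.
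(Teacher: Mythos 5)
First, a point of reference: the paper itself does not prove this statement — it is quoted from Fisher \cite{vis7fisher}, Theorem 2.2, whose proof adapts the Agashe--Stein visibility theorem — so your attempt can only be measured against that cited argument. Your Step 1 is exactly the right formal reduction via \eqref{vis_map}, and your Step 2 is the standard Lang's-theorem argument at places where $A$ has good reduction. Even there, however, the one genuinely delicate point is asserted rather than proved: that $0\to E_i\to A\to E_j\to 0$ prolongs to an exact sequence of abelian schemes over $\cO_{K_v}$, i.e.\ that the schematic closure of $E_i$ in the abelian scheme extending $A$ is smooth and its quotient is the N\'eron model of $E_j$. At $v\mid p$ this is where the hypothesis $e(K_v/\bbQ_p)\le p-1$ earns its keep in the literature; your closing paragraph instead spends that hypothesis on upgrading $E_1[p]\simeq E_2[p]$ to an isomorphism of finite flat group schemes, which the argument as you have structured it never needs (good reduction of $A$ at $v\mid p$ is a hypothesis, not something to be deduced from the congruence). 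You also never treat the archimedean places, though for odd $p$ these are harmless.

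The genuine gap is Step 3, and you say so yourself: ``the $p$-part of $\operatorname{coker}\bigl(A(K_v)\to E_j(K_v)\bigr)$ is controlled by Tamagawa numbers'' is the substantive content of the theorem at the bad places, not an available black box, and no argument is offered. Indeed the slogan is false as stated: $E_j(K_v)/pE_j(K_v)$ is not governed by $c_v(E_j)$ alone (the reduction can contribute $p$-torsion, e.g.\ nonsplit multiplicative reduction with $p\mid v(\Delta)$ has $c_v\le 2$ yet $E_j(K_v^{\mathrm{ur}})/pE_j(K_v^{\mathrm{ur}})\ne 0$, since over $K_v^{\mathrm{ur}}$ this quotient is the geometric component group modulo $p$). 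The actual proof must therefore use the map from $A$ and the N\'eron models of all three varieties: one shows the local class is unramified (this already requires a component-group condition on $E_j$, not on $E_i$) and then kills the unramified part inside $\H^1(K_v^{\mathrm{ur}}/K_v,E_i(K_v^{\mathrm{ur}}))$, whose order is $c_v(E_i)$, using hypothesis (1). This is consistent with the fact that the Tamagawa condition is in practice imposed on both curves — the authors verify $(c_v(E_i),11)=1$ for $i=1,2$ in their numerical section — and it is exactly the step you flag as the ``main obstacle'' without resolving; the suggestion that the congruence $E_1[p]\simeq E_2[p]$ ``makes the bad-reduction types compatible'' is a hope, not an argument. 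As it stands, the proposal reproduces the easy reductions but omits the proof of the theorem's real assertion, local triviality of the visible classes at the bad places.
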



We now prove that $(p)\ |\ (\sL(E_1,\chi))_p$, for two $p$-congruent elliptic curves $E_1/\bbQ$ and $E_2/\bbQ$  satisfying certain conditions. 

\begin{theorem}\label{th:mainVisu}
Let $\chi$ be a primitive Dirichlet character of order $d$ with conductor $\frak{f}_\chi$. Let $E_1/\bbQ$ and $E_2/\bbQ$ be two $p$-congruent elliptic curves \emph{(}i.e., $E_1[p]\simeq E_2[p]$ as Galois modules\emph{)}. For $i\in\{1,2\}$, assume that $E_i$ has good ordinary reduction at $p$, $E_i[p]$ is irreducible, $(\frak{f}_{\chi},\frak{f}_{E_1}\frak{f}_{E_2})=1$, $L(E_1,\chi,1)\ne0$, $L(E_1,1)\ne0$, and $L(E_2,\chi,1)=0$. 

Furthermore, for each ideal $\frak{p}\subset \bbZ[\zeta_d]$ above $p$ with complex conjugate $\overline{\frak{p}}$,  assume that $v_\frak{p}(P_\ell(E_1,\chi, 1))=0$ for all $\ell\ |\ \frak{f}_{E_1}\frak{f}_{E_2}$ or $v_{\overline{\frak{p}}}(P_\ell(E_1,\chi, 1))=0$ for all $\ell\ |\ \frak{f}_{E_1}\frak{f}_{E_2}$.
Then \[(p)\ |\ (\sL(E_1,\chi))_p.\]
\end{theorem}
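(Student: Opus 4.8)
The plan is to extract the divisibility $(p)\mid(\sL(E_1,\chi))_p$ from two inputs: on the analytic side, the factorization formula for $\sL(E_1,\chi)$ implicit in Theorem \ref{MainTh} (and its refinement in Conjecture \ref{exact_question}), which relates the $\frak p$-adic valuation of $\sL(E_1,\chi)$ to the size of eigenspaces in $\Sha(E_1/K^\chi)[p]$ together with correction terms coming from Tamagawa factors and the Euler polynomials $P_\ell(E_1,\chi,1)$; and on the algebraic side, the visualization theorem (Theorem \ref{fisher}), which will furnish a nonzero eigenspace inside $\Sha(E_1/K^\chi)[p]$ because $E_2$ has positive rank over $K^\chi$ (detected analytically by $L(E_2,\chi,1)=0$, via the order-of-vanishing half of BSD or, more directly here, by the congruence propagating a nontrivial Selmer class). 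First I would set up the two curves over $K=K^\chi$ and observe that the hypotheses — good ordinary reduction at $p$, $E_i[p]$ irreducible, $e(K_v/\bbQ_p)\le p-1$ for $v\mid p$ (which holds since $[K:\bbQ]=d$ and $p\nmid d$ forces $K_v/\bbQ_p$ unramified), and Tamagawa numbers coprime to $p$ — are exactly what is needed to apply visualization after passing to the abelian surface $A=E_1\times E_2/\Delta$.

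The key steps, in order: \emph{(1)} Show $\frac{E_1(K)}{pE_1(K)}=0$: this follows because $L(E_1,1)\ne 0$ and $L(E_1,\chi,1)\ne 0$ (hence $L(E_1,\rho,1)\ne0$ for every irreducible constituent $\rho$ of $\mathrm{Ind}_K^\bbQ\mathbbm 1$), so by the rank part of BSD over $\bbQ$ and over $K$ one gets $\rk(E_1/K)=0$; combined with $E_1(K)[p^\infty]=E_1(\bbQ)[p^\infty]$ and $E_1[p]$ irreducible, the mod-$p$ Mordell–Weil group vanishes. \emph{(2)} Apply Theorem \ref{fisher} to conclude $\frac{E_2(K)}{pE_2(K)}=\Vis_A(E_1)\cap\Sha(E_1/K)[p]$; since $L(E_2,\chi,1)=0$ we expect $\rk(E_2/K)>0$, so the left side is nonzero, hence $\Sha(E_1/K)[p]\ne 0$. \emph{(3)} Make this Galois-equivariant: the visualization map \eqref{vis_map} is $\Gal(K/\bbQ)$-equivariant, and since $E_i[p]$ is defined over $\bbQ$, the submodule $\frac{E_2(K)}{pE_2(K)}\hookrightarrow\Sha(E_1/K)[p]$ respects the $\bbF_p[\Gal(K/\bbQ)]$-decomposition; as $E_2(K)/pE_2(K)$ is nonzero and $E_2(\bbQ)/pE_2(\bbQ)$ is controlled by $L(E_2,1)$, the extra rank lives in a nontrivial isotypic piece, producing a nonzero eigenspace $(\Sha(E_1/K)[p])_\theta$ for some $\theta$. \emph{(4)} Translate back to valuations: by the norm formula \eqref{norm_bsd} in Theorem \ref{MainTh} and the hypothesis matching $v_p(C_{E_1/K})=v_p(C_{E_1/\bbQ})$, the ideal $(\sL(E_1,\chi))_p$ has positive valuation at $\frak p$ (or at $\overline{\frak p}$) controlled by that eigenspace; the hypothesis that $v_\frak p(P_\ell(E_1,\chi,1))=0$ for all $\ell\mid\frak f_{E_1}\frak f_{E_2}$ at one of $\frak p,\overline{\frak p}$ ensures the Euler-factor corrections do not cancel this contribution, so $\frak p$ (or $\overline{\frak p}$) divides $(\sL(E_1,\chi))_p$, whence $(p)\mid(\sL(E_1,\chi))_p$ after accounting for complex conjugation symmetry of $\sL$.

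The main obstacle I anticipate is step \emph{(3)}–\emph{(4)}: making the passage from "$\Sha(E_1/K)[p]\ne0$" to "$\frak p\mid(\sL(E_1,\chi))_p$" both Galois-equivariantly precise \emph{and} uniform over the two primes above $p$. The visualization theorem only tells us that \emph{some} eigenspace is nonzero, but the factorization of $\sL(E_1,\chi)$ distinguishes $\frak p$ from $\overline{\frak p}$, and one must check that the nonzero eigenspace feeds into the correct conjugate — this is precisely why the hypothesis is phrased as an "or" over $\frak p$ versus $\overline{\frak p}$ for the Euler factors, and I expect the proof to exploit the fact that $\Sha(E_1/K)[p]$ is self-dual (so its isotypic pieces come in conjugate pairs) together with the reality constraint $\zeta\cdot\sL(E_1,\chi)\in\bbR$ from Theorem \ref{MainTh}, forcing the divisibility to be symmetric under conjugation and thus reducing to showing it at \emph{one} of the two primes. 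A secondary subtlety is justifying $\rk(E_2/K)>0$ from $L(E_2,\chi,1)=0$ without assuming BSD for $E_2$; here one likely argues instead directly with Selmer groups — the mod-$p$ congruence $E_1[p]\simeq E_2[p]$ transports the nonzero class produced by visualization, so the argument can be made self-contained on the descent side rather than appealing to the analytic rank.
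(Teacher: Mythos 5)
There is a genuine gap: your route does not prove the theorem as stated, because every key step imports a conjecture or a hypothesis that the theorem does not assume. Step (1) needs $\rk(E_1/K^\chi)=0$, but the hypotheses only give $L(E_1,1)\ne0$ and $L(E_1,\chi,1)\ne0$, not non-vanishing at the other characters $\chi^j$ cutting out $K^\chi$, and in any case you invoke the rank part of BSD, which is not assumed. Step (2) needs ``$L(E_2,\chi,1)=0\Rightarrow\rk(E_2/K^\chi)>0$'', which is the unknown converse direction of BSD; the mod-$p$ congruence $E_1[p]\simeq E_2[p]$ does not by itself transport an analytic zero into a rational point or Selmer class, so the ``argue directly with Selmer groups'' escape hatch does not close this. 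Step (4) appeals to the norm formula \eqref{norm_bsd}, which in Theorem \ref{MainTh} is itself conditional on BSD over $\bbQ$ and $K^\chi$, and you also quote conditions ($v_p(C_{E/K})=v_p(C_{E/\bbQ})$, torsion conditions, Tamagawa numbers prime to $p$) that belong to Conjecture \ref{exact_question} and Theorem \ref{fisher}, not to this theorem. In effect your argument reconstructs the (conditional) reasoning behind Corollary \ref{co:mainVisu}, not a proof of Theorem \ref{th:mainVisu}.

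The missing idea is that the theorem is proved entirely on the analytic side, using the hypotheses of $p$-congruence and good ordinary reduction through the congruence of $p$-adic $L$-functions of Greenberg--Vatsal. One writes $\sL(E_i,\chi)$ via the Gauss sum as $\chi(-1)G(\overline\chi)L(E_i,\chi,1)/\Omega_\pm(E_i)$, matches this with the interpolation value $\cL_p^\Sigma(E_i,\chi,0)$ of the $\Sigma$-imprimitive $p$-adic $L$-function (where $\Sigma$ consists of the primes dividing $\frak{f}_{E_1}\frak{f}_{E_2}$), and applies \cite[Theorem 3.10]{Gb_Vt}: since $E_1[p]\simeq E_2[p]$ is irreducible and both curves are good ordinary at $p$, one has $\cL_p^\Sigma(E_1,\chi,0)=u\,(\cL_p^\Sigma(E_2,\chi,0)+pr)$ with $u$ a $p$-adic unit. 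The hypothesis $L(E_2,\chi,1)=0$ kills $\cL_p^\Sigma(E_2,\chi,0)$, so $v_\frak{p}(\cL_p^\Sigma(E_1,\chi,0))\ge1$; the Euler-factor hypothesis $v_\frak{p}(P_\ell(E_1,\chi,1))=0$ (for all $\ell\in\Sigma$, at $\frak{p}$ or at $\overline{\frak{p}}$) then transfers this to $v_\frak{p}(\sL(E_1,\chi))\ge1$ or $v_{\overline{\frak{p}}}(\sL(E_1,\chi))\ge1$ when passing from the imprimitive to the primitive value. Only your last observation survives intact: the integrality and reality statement of Theorem \ref{MainTh} forces the divisibility to be symmetric under complex conjugation, upgrading divisibility at one of $\frak{p},\overline{\frak{p}}$ to divisibility by $(p)$. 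No visualization, Selmer-group, or BSD input enters the paper's proof of this theorem; those tools only appear in the corollary, where the extra hypotheses you used are explicitly assumed.
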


\begin{proof}
Let $\frak{p}$  be a prime ideal of $\bbZ[\zeta_d]$ above $p$. Let $x\mapsto x_{\frak{p}}$
be the localization map at $\frak{p}$ corresponding to an embedding of $\sigma_{\frak{p}}: \bbZ[\zeta_d]\hookrightarrow\bbZ_p[\zeta_d]$.
Let \[
\Sigma:=\{\ell\ |\ \ell \text{ a prime with }\ell\ne p\text{ and } \ell\ |\ \frak{f}_{E_1}\frak{f}_{E_2}\}.
\]
    Let $\cL_p^\Sigma(E_i,\chi,T)$ be the $p$-adic L-function as defined in \cite{Gb_Vt}. Then
    \[\cL_p^\Sigma(E_i,\chi,0)=\left(G(\overline{\chi})\frac{L_\Sigma(E_i,\chi,1)}{\Omega_{\pm}(E)}\right)_\frak{p},\]
    where $G(\overline{\chi})$ is the Gauss sum of $\overline{\chi}$, $\Omega_{\pm}(E)$ is $\Omega_+(E)$ when $\chi(-1)=1$ and $\Omega_-(E)$ otherwise, and $L_\Sigma(E_i,\chi,s)=\prod\limits_{\ell\notin \Sigma}P_\ell(E_i,\chi,s)$. Since $\chi$ is a primitive Dirichlet character, Definition \ref{twistedLvalue} gives
    \[\sL(E_i,\chi)=\frac{\sqrt{\frak{f}_\chi} L(E_i,\chi,1)}{w_\chi|\Omega_{\pm}(E_i)|}=\frac{G(\overline{\chi})L(E_i,\chi,1)}{\sqrt{\chi(-1)}|\Omega_{\pm}(E_i)|}=\chi(-1)\frac{G(\overline{\chi})L(E_i,\chi,1)}{\Omega_{\pm}(E_i)}.
    \]
Therefore, using \cite[Theorem 3.10]{Gb_Vt} we obtain that there is a $p$-adic unit $u\in \bbZ_p[\zeta_d]^\times$ and $r\in \bbZ_p[\zeta_d]$ so that 
\begin{align*}
    (\sL(E_1,\chi))_\frak{p} &= \chi(-1)\cL_p^\Sigma(E_1,\chi,0)\left(\prod\limits_{\ell\in\Sigma}P_\ell(E_1,\chi,1)\right)_\frak{p}\\
    &=\chi(-1)u(\cL_p^\Sigma(E_2,\chi,0)+pr)\left(\prod\limits_{\ell\in\Sigma}P_\ell(E_1,\chi,1)\right)_{\frak{p}}\tag{$\cL_p(E_2,\chi,1)=0$ and $p\ \nmid\ d$}\\
    &=(u\chi(-1))\cdot(pr)\cdot \left(\prod\limits_{\ell\in\Sigma}P_\ell(E_1,\chi,1)\right)_\frak{p}.
\end{align*}
Our assumption that $p\ \nmid \ P_\ell(E_1,\chi, 1)$ for all $\ell \in\Sigma$ for at least one of the localizations $\frak{p}$ and $\overline{\frak{p}}$ implies that $v_\frak{p}(\sL(E_1,\chi))\geq 1$ or $v_{\overline{\frak{p}}}(\sL(E_1,\chi))\geq 1$. Together with Theorem \ref{MainTh}, this gives that  $v_\frak{p}(\sL(E_1,\chi))\geq 1$ and $v_{\overline{\frak{p}}}(\sL(E_1,\chi)) \geq 1$.
\end{proof}

\begin{corollary}\label{co:mainVisu}
If furthermore we assume 
that Conjecture \ref{BSD}(1) holds for $E_1$ and $E_2$ over $K^\chi$ and $\bbQ$ and that 
$\frac{E_2(K^\chi)}{pE_2(K^\chi)}\subset \Sha(E_1/K^\chi)[p]$ under the visualization map \emph{(}equation \eqref{vis_map}\emph{)}, then \[(p)\ |\ \prod_\theta(h_\theta(\chi(\tau)^{d/d_\theta}),p),\] where $h_\theta$, $d_\theta$, and $\tau$ are as in Conjecture \ref{exact_question}.
\end{corollary}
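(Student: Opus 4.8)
Throughout we are in the situation of Theorem \ref{th:mainVisu}; in particular $L(E_2,\chi,1)=0$, and $\Gal(K^\chi/\bbQ)=\langle\tau\rangle$ is cyclic of order $d$ with $p\nmid d$, so $\bbF_p[\Gal(K^\chi/\bbQ)]$ is semisimple. (The conclusion of Theorem \ref{th:mainVisu} itself is not used; Corollary \ref{co:mainVisu} concerns only the right-hand side of Conjecture \ref{exact_question}.) The plan has three steps. First, use $L(E_2,\chi,1)=0$ together with Conjecture \ref{BSD}(1) to produce a nonzero ``augmentation part'' of the $\bbZ[\Gal(K^\chi/\bbQ)]$-module $E_2(K^\chi)$. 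Second, use the lattice structure of Mordell--Weil and the visualization embedding to pin down which irreducible $\bbF_p[\Gal(K^\chi/\bbQ)]$-modules must occur in $\Sha(E_1/K^\chi)[p]$. Third, translate this into the asserted ideal divisibility by a short computation in $\bbZ[\zeta_d]/(p)$. The delicate point is the second step: an $\bbF_p[\Gal(K^\chi/\bbQ)]$-submodule of $\Sha(E_1/K^\chi)[p]$ is a priori an arbitrary semisimple module (in particular, possibly a single eigenline, accounting for only one prime above $p$), but the submodule coming from $E_2(K^\chi)$ is the reduction of a genuine $\bbZ[\Gal(K^\chi/\bbQ)]$-lattice, and this rigidity is what forces its isotypic pieces to contain \emph{all} the Galois-conjugate eigenlines simultaneously --- which is exactly what makes the relevant primes of $\bbZ[\zeta_d]$ multiply up to $(p)$.

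\emph{Step 1.} By Artin formalism, $L(E_2/K^\chi,s)=L(E_2,s)\prod_{j=1}^{d-1}L(E_2,\chi^j,s)$. Combining Conjecture \ref{BSD}(1) for $E_2$ over $K^\chi$ and over $\bbQ$ with the algebraic identity $\rk(E_2/K^\chi)-\rk(E_2/\bbQ)=\dim_\bbQ\big(E_2(K^\chi)\otimes\bbQ\big)-\dim_\bbQ\big(E_2(K^\chi)\otimes\bbQ\big)^{\Gal(K^\chi/\bbQ)}$ (the invariant subspace being $E_2(\bbQ)\otimes\bbQ$, since $d$ is invertible in $\bbQ$), one finds that $\sum_{j=1}^{d-1}\ord_{s=1}L(E_2,\chi^j,s)$ equals the dimension of the augmentation part of $E_2(K^\chi)\otimes\bbQ$. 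Since $L(E_2,\chi,1)=0$ this sum is $\geq 1$; decomposing the augmentation part by the order $e$ of the characters of $\Gal(K^\chi/\bbQ)$ as $\bigoplus_{e\mid d,\,e>1}\bbQ(\zeta_e)^{m_e}$, we conclude $m_e\geq 1$ for at least one divisor $e>1$ of $d$. (When $d$ is prime --- the case treated here --- necessarily $m_d\geq1$, and one takes $e=d$ below.)

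\emph{Step 2.} Put $L_0=E_2(K^\chi)/E_2(K^\chi)_{\tors}$. The exact sequence $0\to E_2(K^\chi)_{\tors}/p\to\frac{E_2(K^\chi)}{pE_2(K^\chi)}\to L_0\otimes\bbF_p\to 0$ of $\bbF_p[\Gal(K^\chi/\bbQ)]$-modules ($L_0$ being torsion-free) is split, by semisimplicity. After inverting $d$, the lattice $L_0$ decomposes over $\bbZ[1/d][\Gal(K^\chi/\bbQ)]\iso\prod_{e\mid d}\bbZ[1/d][\zeta_e]$ into projective pieces of ranks $m_e$, and reducing mod $p$ gives $L_0\otimes\bbF_p\iso\bigoplus_{e\mid d}(\bbF_p[\zeta_e])^{m_e}$ (using $p\nmid d$). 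Fix $e>1$ with $m_e\geq1$ and factor $\Phi_e\equiv g_1\cdots g_k\pmod p$ into distinct monic irreducibles; then $\bbF_p[\zeta_e]\iso\prod_{i=1}^k\bbF_p[X]/(g_i)$ as $\bbF_p[\Gal(K^\chi/\bbQ)]$-modules, so $\frac{E_2(K^\chi)}{pE_2(K^\chi)}$ contains all $k$ of the nontrivial irreducibles $V_i:=\bbF_p[X]/(g_i)$, on which $\tau$ acts with minimal polynomial $g_i$ and eigenvalues the primitive $e$-th roots of unity. The sequence \eqref{vis_map} is one of $\Gal(K^\chi/\bbQ)$-modules and equivariant maps (the constructions being functorial in the Galois extension $K^\chi/\bbQ$; Conjecture \ref{BSD}(1) for $E_1$ is what makes the hypothesis attainable via Theorem \ref{fisher}), so the hypothesis provides a $\Gal(K^\chi/\bbQ)$-equivariant inclusion $\frac{E_2(K^\chi)}{pE_2(K^\chi)}\hookrightarrow\Sha(E_1/K^\chi)[p]$. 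Hence each $V_i$ occurs in $\Sha(E_1/K^\chi)[p]$, so each $V_i$ is one of the representations $\theta$ of Conjecture \ref{exact_question}, with $h_\theta=g_i$ and $d_\theta=e$.

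\emph{Step 3.} As $\chi\colon\Gal(K^\chi/\bbQ)\to\mu_d$ is an isomorphism, $\chi(\tau)$ has order $d$, so $\zeta_e:=\chi(\tau)^{d/e}$ is a primitive $e$-th root of unity and, for $\theta=V_i$, $h_\theta(\chi(\tau)^{d/d_\theta})=g_i(\zeta_e)\in\bbZ[\zeta_d]$. Using $\bbZ[\zeta_d]/(p)\iso\bbF_p[X]/\Phi_d$ and the (unramified, as $p\nmid d$) splitting of $p$ in $\bbZ[\zeta_e]\subset\bbZ[\zeta_d]$, one checks that $\big(g_i(\zeta_e),p\big)\bbZ[\zeta_d]$ is the product of the primes of $\bbZ[\zeta_d]$ lying above the prime $\frak{q}_i=(g_i(\zeta_e),p)$ of $\bbZ[\zeta_e]$; as $i$ runs from $1$ to $k$, the $\frak{q}_i$ exhaust the primes of $\bbZ[\zeta_e]$ above $p$, whence $\prod_{i=1}^k\big(g_i(\zeta_e),p\big)=(p)$ in $\bbZ[\zeta_d]$. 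Since every $V_i$ appears among the $\theta$, the product $\prod_\theta\big(h_\theta(\chi(\tau)^{d/d_\theta}),p\big)$ is divisible by $(p)$, which is the claim. The remaining verifications --- equivariance of \eqref{vis_map}, the lattice decomposition in Step 2, and the computation of $\big(g_i(\zeta_e),p\big)$ --- are routine; all of the content lies in the rigidity exploited in Step 2.
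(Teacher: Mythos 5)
Your proposal is correct and follows essentially the same route as the paper's proof: rank growth forced by Conjecture \ref{BSD}(1) together with $L(E_2,\chi,1)=0$, the key observation that the reduction mod $p$ of the Mordell--Weil lattice contains every irreducible factor of $\Phi_e \bmod p$ with equal multiplicity, transfer of these constituents into $\Sha(E_1/K^\chi)[p]$ via the equivariant visualization inclusion, and the ideal identity $\prod_i\bigl(g_i(\zeta_e),p\bigr)=(p)$ in $\bbZ[\zeta_d]$. Your unified treatment over all divisors $e\mid d$, with the explicit $\bbZ[1/d][\zeta_e]$-lattice argument, is simply a more detailed rendering of the paper's two-case argument (the full-order case and the proper-subfield case treated separately there).
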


\begin{proof}
First assume that 
 the $\bbQ$-vector space
 $\frac{E_2(K^\chi)}{E_2(\bbQ)}\otimes\bbQ$ splits as $\varphi(d)$-dimensional irreducible vector spaces and 
\[\left(\frac{E_2(K^\chi)}{pE_2(K^\chi)}\right)/\left(\frac{E_2(\bbQ)}{pE_2(\bbQ)}\right)\simeq \frac{E_2(K^\chi)\otimes \bbF_{p}}{E_2(\bbQ)\otimes \bbF_{p}}, 
\] where $\varphi$ denotes the Euler totient function.  In particular, every non-trivial irreducible $
\Gal(K^\chi/\bbQ)$-subrepresentation of $\left(\frac{E_2(K^\chi)}{pE_2(K^\chi)}\right)/\left(\frac{E_2(\bbQ)}{pE_2(\bbQ)}\right)$ corresponding to the irreducible factors of the $d^{\text{th}}$ cyclotomic polynomial $\Phi_d \mod p$ occurs with same multiplicity. The same holds for the image of $\left(\frac{E_2(K^\chi)}{pE_2(K^\chi)}\right)/\left(\frac{E_2(\bbQ)}{pE_2(\bbQ)}\right)$ in  $\frac{\Sha(E_1/K^\chi)[p]}{\Sha(E_1/\bbQ)[p]}$ under the visualization map. Therefore $(p)\ |\ \prod\limits_\theta (h_\theta(\chi(\tau)),p)$. 

In the case when there is a subfield $\bbQ\ne F\subsetneq K^\chi$ such that $[F:\bbQ]=d'$ and  $\dim_\bbQ\left(\frac{E_2(F)}{E_2(\bbQ)}\right) \ne 0$, then one can restrict $\chi$ to $\Gal(F/K)\simeq \langle\tau^{d/d'}\rangle$ and use the same argument as above for $F$. This gives $(p)| \prod\limits_\theta h_\theta(\chi(\tau)^{d/d'},p)$, where $\theta$ runs through all non-trivial irreducible representations of $\Sha(E_1/F)[p]$.
\end{proof}

\begin{remark}
    If $d$ is a prime $q$ such that $\bbF_p$ or $\bbF_{p^2}$ does not contain $q^{\text{th}}$ roots of unity, $\chi(\ell)\ne 1$ for all $\ell \in \Sigma$ (as in Theorem \ref{th:mainVisu}), and the hypothesis of theorem \ref{fisher} is satisfied,
    then $(p)\ |\ (\sL(E_1,\chi))$ if and only if $(p)\ |\ \prod\limits_\theta(h_\theta(\chi(\tau)),p)$. An example would be $q=61$ with $K^\chi$ as degree 61 subfield of $\bbQ(\zeta_{367})$ and $p=11$.
\end{remark}
 \subsection{Numerical evidence} \label{evidence_vis}We present an example using the curve with LMFDB-label ``9450.du1''. More examples can be found in \cite{github}, together with the codes.

Consider the case $p=11$, $K^\chi := \bbQ(\zeta_{11})^+$ and 
$E_1\coloneqq$ ``9450.du1''. Let $\chi$ be a primitive character with field of definition $K^\chi$, conductor $\frak{f}_\chi=11$ and root number $w(\chi)=1$. We verify that Conjecture \ref{exact_question} holds for $E_1$ and all characters of $K^\chi$.

 $E_1$ can be given by the Weierstrass model $$E_1 : y^2+xy+y=x^3-x^2-271946810x-1733074051463.$$ We first find an 11-congruent curve, say $E_2 :  y^2+xy+y=x^3-x^2-5x+7$ (``9450.dr1'').

Keeping notation as in Theorem \ref{MainTh}, we check using Magma, that $E_1$ satisfies its conditions  and that the pair $(E_1, E_2)$ satisfies the conditions of Theorem \ref{fisher}. We find that

 $\rk(E_1/\bbQ)=\rk(E_2/\bbQ)=0$,
 $\rk(E_1/K^\chi)=0$ and $\rk(E_2/K^\chi)=4$,
 $|\Sha_{\an}(E_1/\bbQ)| = |\Sha_{\an}(E_2/\bbQ)|=1$, 
 $(c_v(E_i),11)=1$, for $i\in\{1,2\}$, and $v$ a place of $K^{\chi}$ or $\bbQ$. Furthermore, \[\prod\limits_{M_{K^\chi}}c_v(E_i)=\prod\limits_{M_{\bbQ}}c_v(E_i)\] for $i\in\{1,2\}$, where $M_{K^\chi}$ is the set of finite places of $K^\chi$,
 $|\Sha_\an(E_1/K^\chi)|= 2^4\cdot5^2\cdot11^4\cdot59^2$, and $|\Sha_\an(E_2/K^\chi)|=1$,
 and $(E_1)_\tors(K^\chi)=(E_2)_\tors(K^\chi)=1$. 

By Theorem \ref{MainTh} we have 
$N_{\bbQ(\zeta_5)^+/\bbQ}(\sL(E_1,\chi))=2^2\cdot5\cdot11^2\cdot59$. Using the fact that $2$ is inert, and ideals above $5$ and $59$ are principal in $\cO_{\bbQ(\zeta_5)^+}$, we compute that  
$(\sL(E_1, \chi))_{11} = (11)$. 

In fact one can use Theorem \ref{th:mainVisu}
directly. One computes that $\Sigma=\{2,3,5,7\}$ and that $3,5$ are the primes of additive reduction and $2,7$ are the primes of split multiplicative reduction. Note that in Theorem \ref{th:mainVisu} one can ignore the primes of additive reduction. We have $P_2(E_1,\chi)=(1-\chi(2)/2)^{-1}$ and $P_7(E_1,\chi)=(1-\chi(7)/7)^{-1}$. Since $2,7\notin \mu_5\subset \bbF_{11}$, we obtain that $v_{\frak{p}_i}(\sL(E_1,\chi))=1$ for $i\in\{1,    2\}$, where $\frak{p}_1$ and $\frak{p}_2$ are the prime ideals of $\cO_K$ above $11$ that are not complex conjugates of each other.

On the other hand, using the function \texttt{MordellWeilGroup()} we find that  
$E_2(K^{\chi})$ is generated by 
\begin{align*}
 \{
        &(\alpha^4 - 3\alpha^2 + 1:
        2\alpha^4 - \alpha^3 - 6\alpha^2 + 2\alpha + 1:1),\\
        &(\alpha^4 + 5\alpha^3 - \alpha^2 - 12\alpha + 1: 7\alpha^4 + 15\alpha^3 - 16\alpha^2 - 28\alpha + 9: 1),\\
        &(-2\alpha^4 - 3\alpha^3 + 12\alpha^2 + 3\alpha - 4:
        20\alpha^3 - 28\alpha^2 - 25\alpha + 11:1),\\
    &(-3\alpha^4 + 6\alpha^3 + 4\alpha^2 - 16\alpha + 12:
        -12\alpha^4 + 29\alpha^3 + 21\alpha^2 - 82\alpha + 37: 1)
\},
\end{align*}
where $\alpha$ is a root of the polynomial $x^5 - x^4 - 4x^3 + 3x^2 + 3x - 1$. 

\vspace{.1in}
Viewing $\frac{E_2(K^{\chi})}{11E_2(K^{\chi})}$ as a $4$-dimensional $\bbF_{11}$-vector space, we obtain a representation 
$$\rho:\Gal(K^{\chi}/\bbQ)\coloneqq \langle \tau\rangle\to \GL_4(\bbF_{11}),$$
where $\tau: \alpha\mapsto -\alpha^2+2$.
Let $M_\tau$ be the matrix representing $\rho(\tau)$ obtained using the above generators of $E_2(K^\chi)$ as basis vectors. Then
\[
M_{\tau} = \begin{pmatrix}
        -1 &-1  &0  &0\\
        0  &0 &-1  &0\\
        1  &1  &0 &-1\\
        0 &-1  &0  &0
    \end{pmatrix}.
\]
The eigenvalues of $M_{\tau}$ are $\{3,4,5,9\}$, i.e., each of the non-trivial eigenspaces of $\rho$ is of dimension 1.
Therefore, ``expectedly'', $11$ must exactly divide the ideal generated by $\sL(E_1,\chi)$ in $\bbZ[\zeta_5]$, which is the case.

\begin{remark}\label{rem:Vis_not_enough}
If Conjecture \ref{exact_question}
is true then using Theorem \ref{fisher}  to completely visualize $\Sha(E/K)[p]$ forces $(\sL(E,\chi))_{p}=(p)^n$ for some $n\in \bbN$. To circumvent the above, one needs to falsify the conditions in Theorem \ref{fisher}, which is certainly challenging as it requires either to construct a $p$-congruent elliptic curve $E_2$ such that $A$ has bad reduction at $p$, or such that $p$ divides at least one of the Tamagawa numbers, or to construct a higher dimensional abelian variety to visualize $\Sha(E/K^{\chi})[p]$. All of the above are hard, and instead we use the method of $p$-descent to verify Conjecture \ref{exact_question} for some elliptic curves when $q=5$ and $\frac{|\Sha(E/K)[11]|}{|\Sha(E/\bbQ)[11]|}=11^2$.

\end{remark}

\section{Numerical evidence using $p$-descent}\label{Sec:p-descent}

\subsection{Background}\label{subbackground}
Let $p$ be an odd prime, let $K$ be a number field, $\overline{K}$ its algebraic closure and $G_K$ its absolute Galois group. 
 Applying Galois cohomology to 
 \[0\to E(\overline{K})[p]\to E(\overline{K})\stackrel{\cdot p}{\to} E(\overline{K})\to 0,\]
 we obtain
 \[\begin{tikzcd}
0\arrow[r] &\frac{E(K)}{pE(K)} \arrow[r] \arrow[d] &\H^1(G_K, E(\overline{K})[p]) \arrow[r] \arrow[d,"\prod\limits_v\res_v"] \arrow[rd,"\alpha"] & \H^1(G_K, E(\overline{K}))[p] \arrow[r]\arrow[d,"\prod\limits_v\res_v"] &0\\
0\arrow[r] &\prod\limits_v\frac{E(K_v)}{pE(K_v)} \arrow[r] &\prod\limits_v\H^1(G_{K_v}, E(\overline{K_v})[p]) \arrow[r]  & \prod\limits_v\H^1(G_{K_v}, E(\overline{K_v}))[p] \arrow[r] &0.
\end{tikzcd}
\]
The $p$-Selmer group and the Tate-Shafarevich group are defined as 
$\Sel_p(E/K)\coloneqq \ker(\alpha)\subset\H^1(G_K,E(\overline{K})[p])$ and $\Sha(E/K)\coloneqq \ker(\H^1(G_K, E(\overline{K}))\to \prod\limits_v\H^1(G_{K_v},E(\overline{K_v}))$, respectively.
In order to obtain a concrete description of the $p$-Selmer group and ultimately information about $\Sha(E/K)[p]$, one needs a good enough
description of $\H^1(G_K, E(\overline{K})[p])$. We follow  \cite{schaefer-stoll-p-descent} for such a description.

In general, let $\Delta$ be a $G_K$-set, 
$M$ a $G_K$-module and define
$M^\Delta\coloneqq \{\mathrm{Maps}: \Delta \to M\}$. 
 $M^\Delta$ has a
$G_K$-module structure with a natural action of $G_K$ given by 
\[g\cdot m \coloneqq P\mapsto gm(g^{-1}P), \quad \text{ for } g \in G_K, m \in M^\Delta \text{ and } P\in \Delta.\]

Denote by $L_\Delta$ the \'{e}tale algebra  associated with $\Delta$. This is the subalgebra 
$\overline{K}^\Delta$ fixed by $G_K$, i.e. the maps $m$ such that $g(m(P))= m(gP)$.
To every orbit $\Delta_i$ of $\Delta$ under the $G_K$-action, choose a representative $P_i$ and associate to $\Delta_i$ the field of definition of $P_i$, 
$K(P_i)$. Then $L_{\Delta_i}$ is isomorphic to $K(P_i)$ where $l\in K(P_i)$ represents the map $P_i^g \mapsto l^g$. 

\medskip

In the case where $\Delta\coloneqq E[p]\setminus\{\infty\}$, we let $L=L_\Delta$ be the associated 
\'{e}tale algebra as described above. Identify $E[p]\coloneqq \pair{P}{Q}$ with $\bbF_p^2$ and let $\Delta^+$ be the $G_K$-set of 
lines passing through the origin in $E[p]$. Each element of $\Delta^+$ is the subgroup generated by $iP+Q$ for $0\leq i\leq p-1$ and $P$. Let $L^+$ be the \'{e}tale algebra
associated with $\Delta^+$. The natural map $\Delta \to \Delta^+$
\[
  P \mapsto \langle P\rangle\coloneqq \text{line passing through $P$ and $\infty$},
\]
defines an embedding $L^+ \to L$ as $K$-algebras. Using this we identify $L^+$ with a
subalgebra of $L$. Generically, $\Delta$ has a single orbit and hence $L$ is a field. When this is not the case, the situation is simpler and therefore we assume that $L$ is a field, so that we have $[L:L^+]= p-1$. We fix a representative $P$ of $\Delta$ that fixes an embedding of $L\hookrightarrow\overline{K}$.

Let $w: E[p]\to \mu_p^\Delta$ be the map induced by the Weil-pairing $Q\mapsto e(Q,\_)\in
\mu_p^\Delta$, where $\mu_p$ denotes the $p^{th}$-roots of unity. The compatibility of the Weil-pairing with Galois action implies that $w$ is a well-defined
morphism of $G_K$-modules. Furthermore, a generalized version of Hilbert's theorem 90 implies that $\H^1(G_K, \mu_p^\Delta)$ can be identified with $L^\times/(L^\times)^p$.

The following theorem due to Schaefer and Stoll embeds $\H^1(G_K, E[p])$ inside $L^\times/(L^\times)^p$.

\begin{theorem}[Lemma 5.2 in \cite{schaefer-stoll-p-descent}]\label{gen_p_descent}
  Let $L^+\subset L\subset \overline{K}$ be as above, and $\Gal(L/L^+)=\langle
  \gamma\rangle\stackrel{\phi}{\simeq} \bbF_p^\times$, where the last isomorphism is given by $P^\gamma = \phi(\gamma) P$. Then the push forward of~$w$, $w_*: \H^1(G_K, E[p])\to
  \H^1(G_K, \mu_p^\Delta)$ is injective and 
  $\Im(w_*)\subset L^\times/(L^\times)^p$ is contained in the subgroup
  \[\left(\frac{L^\times}{(L^\times)^p} \right)^{(1)}\coloneq\{l\in L^\times\ |\
  l^\gamma=l^{\phi(\gamma)} \mod (L^\times)^p\}.\]
\end{theorem}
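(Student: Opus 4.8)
The plan is to establish two things: that $w_*$ is injective, and that its image lands in the $\phi$-eigenspace $(L^\times/(L^\times)^p)^{(1)}$.

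First I would prove injectivity. Since $w : E[p] \to \mu_p^\Delta$ is a morphism of $G_K$-modules, applying $\H^1(G_K, -)$ gives the induced map $w_*$; its kernel is controlled by the long exact cohomology sequence attached to the short exact sequence $0 \to \ker w \to E[p] \to \mathrm{im}\, w \to 0$ (together with the cokernel sequence). The key point is that $w$ is in fact \emph{injective} on $E[p]$: the Weil pairing $e : E[p] \times E[p] \to \mu_p$ is nondegenerate, so $Q \mapsto e(Q, -)$ has trivial kernel, and composing with $\mu_p \hookrightarrow \mu_p^\Delta$ (the map sending a root of unity to the constant function, or rather the map recording its values on $\Delta$) preserves this. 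Hence $\ker w = 0$ and $w_*$ is injective on $\H^1$ — there is no connecting map contribution because the left term $\H^0(G_K, \mathrm{coker}\, w)$ surjects appropriately, or more simply because $w$ being a split injection of $G_K$-modules (or at least injective with the relevant $\H^1$ of the cokernel-piece under control) forces injectivity on $\H^1$. I would spell out the precise diagram chase here.

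Next, the eigenspace containment. Under the identification $\H^1(G_K, \mu_p^\Delta) \simeq L^\times/(L^\times)^p$ coming from the generalized Hilbert 90 (this is where we use that $\Delta$ is a single $G_K$-orbit with $L = L_\Delta$ its étale algebra), I need to understand the image of $w_*$. The map $w$ sends $Q$ to the function $P' \mapsto e(Q, P')$ on $\Delta$. The element $\gamma \in \Gal(L/L^+)$ acts on $\Delta$ via $P \mapsto \phi(\gamma) P$, i.e.\ it permutes $\Delta$ according to scalar multiplication by $\phi(\gamma) \in \bbF_p^\times$. Using bilinearity of the Weil pairing, $e(Q, \phi(\gamma)P') = e(Q, P')^{\phi(\gamma)}$, which translates precisely into the relation $l^\gamma = l^{\phi(\gamma)} \bmod (L^\times)^p$ for any $l$ in the image. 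So I would take a cocycle class in $\H^1(G_K, E[p])$, push it forward, express the result as an element $l \in L^\times/(L^\times)^p$ via the explicit Hilbert 90 isomorphism, and verify the eigenvector identity by unwinding the action of $\gamma$ on $\mu_p^\Delta$ through the $\Delta$-permutation. The compatibility between the $G_K$-module structure on $\mu_p^\Delta$ and the $L$-algebra structure is what makes this bookkeeping go through.

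The main obstacle I expect is the second step: keeping the two actions straight — the $G_K$-action on $\mu_p^\Delta = \mathrm{Maps}(\Delta, \mu_p)$ given by $(g \cdot m)(P) = g\, m(g^{-1}P)$, versus the $\mathrm{Gal}(L/L^+)$-action obtained by restricting to the subgroup and transporting through the isomorphism $L^\times/(L^\times)^p \simeq \H^1(G_K, \mu_p^\Delta)$ — and checking they interact with the Weil pairing's bilinearity exactly so as to produce the exponent $\phi(\gamma)$ rather than, say, $\phi(\gamma)^{-1}$. Injectivity of $w_*$ is comparatively routine once nondegeneracy of the Weil pairing is invoked; the eigenspace identity is the substantive content and requires the careful choice of basepoint $P$ fixing the embedding $L \hookrightarrow \overline{K}$ so that the identifications are canonical.
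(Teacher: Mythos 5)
First, note that the paper does not prove this statement at all: it is quoted verbatim as Lemma 5.2 of Schaefer--Stoll, so the only comparison available is with the argument in that reference. Your second step (the eigenspace containment) is essentially the standard argument and is fine in outline: $\gamma$ moves the point $P\in\Delta$ by the scalar $\phi(\gamma)$, and bilinearity of the Weil pairing, $e(Q,\phi(\gamma)P)=e(Q,P)^{\phi(\gamma)}$, is exactly what produces the relation $l^\gamma\equiv l^{\phi(\gamma)} \bmod (L^\times)^p$ once the Shapiro/Hilbert~90 identification $\H^1(G_K,\mu_p^\Delta)\simeq L^\times/(L^\times)^p$ is unwound; the bookkeeping you flag is the only issue there.

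The injectivity step, however, has a genuine gap. Injectivity of $w$ as a map of $G_K$-modules does \emph{not} imply injectivity of $w_*$ on $\H^1$: from $0\to E[p]\xrightarrow{w}\mu_p^\Delta\to C\to 0$ the kernel of $w_*$ is the image of the connecting map $\H^0(G_K,C)\to \H^1(G_K,E[p])$, and showing this image vanishes (equivalently, that $\H^0(G_K,\mu_p^\Delta)\to \H^0(G_K,C)$ is surjective) is precisely the content of the lemma, not a routine consequence of nondegeneracy of the Weil pairing. Your fallback claims do not close this: $w$ is not a split injection of $G_K$-modules in general ($\mu_p^\Delta$ has $\bbF_p$-dimension $p^2-1$ against $2$, the category of $\bbF_p[G_K]$-modules is not semisimple since the image of Galois may have order divisible by $p$, and no splitting is exhibited), and the asserted surjectivity on $\H^0$ of the cokernel is exactly what must be proved. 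The actual argument in Schaefer--Stoll is not formal: under Shapiro's lemma $w_*$ becomes restriction to $G_L$ followed by pushforward along $e(\cdot,P)\colon E[p]\to\mu_p$; restriction is injective because $[L:K]$ divides $p^2-1$, hence is prime to $p$, while $e(\cdot,P)$ has kernel $\langle P\rangle$ over $L$, and one must use the Galois action (in particular the $\Gal(L/L^+)\simeq\bbF_p^\times$ structure, since $\langle P\rangle$ is not $G_K$-stable) to rule out classes landing in $\Hom(G_L,\langle P\rangle)$. None of this appears in your proposal, so as written the injectivity half is unproved.
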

We note that the subgroup $\left(\frac{L^\times}{(L^\times)^p} \right)^{(1)}$ in which sits the image of $\H^1(G_K, E[p])$ is infinite. However, if we let $S$ be the set of places of bad reduction of $E/K$ as well as places above $p$, and $R(L,S;p)$
be the subgroup of $L^\times/(L^\times)^p$ cut out by elements $l\in L^\times$ such that $L(l^{1/p})/L$ is unramified everywhere outside $S$, then the local triviality condition on $\Sel_p(E/K)$ implies that \[\Sel_p(E/K)\subset R(L,S;p),\] where $R(L,S;p)$ is now finite.

The following result of Poonen and Schaefer allows us compute $R(L,S;p)$ in terms of $S$-class groups, $\Cl_S(L)$, and $S$-unit groups, $U_S(L)$, of $L$.
\begin{proposition}[Proposition 12.6 in \cite{poonen-schaefer}] \label{S-units_PS}
Let $L$, $S$ be as above. Then the sequence
\[1\to \frac{U_S(L)}{(U_S(L))^p}\to R(L,S;p)\to
\Cl_S(L)[p]\to 0.\]    
is exact. 
\end{proposition}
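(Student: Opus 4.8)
\emph{Proof proposal.} The plan is the standard ``Kummer theory meets $S$-class groups'' argument: build an explicit homomorphism $R(L,S;p)\to\Cl_S(L)[p]$ out of fractional ideals, identify its kernel with the image of the $S$-units, and check surjectivity. Throughout write $\cO_{L,S}$ for the ring of $S$-integers of $L$, so that $\Cl_S(L)$ is the ideal class group of $\cO_{L,S}$, $U_S(L)=\cO_{L,S}^\times$, and the fractional $\cO_{L,S}$-ideals form the free abelian group on the primes $\frak q\notin S$.

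First I would establish the local ramification dictionary. Fix $l\in L^\times$ and a prime $\frak q\notin S$; in particular $\frak q\nmid p$. I claim $L(l^{1/p})/L$ is unramified at $\frak q$ if and only if $p\mid v_\frak q(l)$. Passing to the completion: if $p\mid v_\frak q(l)$, then after scaling $l$ by a $p$-th power it becomes a unit $u$, and $x^p-u$ is separable modulo $\frak q$ (its derivative $px^{p-1}$ is a unit at $\frak q$ and does not vanish at a root of $u$), so $L_\frak q(u^{1/p})/L_\frak q$ is unramified; conversely, if $p\nmid v_\frak q(l)$ then, after scaling by a $p$-th power and using $\gcd(v_\frak q(l),p)=1$, one reduces to the case $v_\frak q(l)=1$, where $x^p-l$ is Eisenstein and the extension is totally ramified of degree $p$. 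Since $v_\frak q(lc^p)=v_\frak q(l)+p\,v_\frak q(c)$, the condition ``$p\mid v_\frak q(l)$ for all $\frak q\notin S$'' descends to $L^\times/(L^\times)^p$, and this dictionary shows a class lies in $R(L,S;p)$ precisely when it is represented by some $l$ with $l\,\cO_{L,S}$ equal to the $p$-th power of a fractional $\cO_{L,S}$-ideal.

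Given such an $l$, let $\frak a(l)$ be the unique $\cO_{L,S}$-ideal with $\frak a(l)^p=l\,\cO_{L,S}$, and define the map by sending the class of $l$ to $[\frak a(l)]\in\Cl_S(L)$. Since $\frak a(l)^p$ is principal, $[\frak a(l)]$ is killed by $p$; replacing $l$ by $lc^p$ replaces $\frak a(l)$ by $\frak a(l)\cdot c\,\cO_{L,S}$, so the map is well defined on $R(L,S;p)$ and is plainly a homomorphism. Its kernel consists of the classes of $l$ with $\frak a(l)=c\,\cO_{L,S}$ principal, i.e.\ $l\,\cO_{L,S}=(c^p)$, i.e.\ $l/c^p\in U_S(L)$; so the kernel is the image of $U_S(L)$ in $L^\times/(L^\times)^p$ (and every $u\in U_S(L)$ does lie in $R(L,S;p)$, since $v_\frak q(u)=0$ for $\frak q\notin S$). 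Moreover $U_S(L)\cap(L^\times)^p=(U_S(L))^p$: if $u=c^p$ with $u\in U_S(L)$ then $v_\frak q(c)=\tfrac1p v_\frak q(u)=0$ for all $\frak q\notin S$, so $c\in U_S(L)$. This identifies the kernel with $U_S(L)/(U_S(L))^p$ and gives exactness at the first two terms.

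For surjectivity, take any class in $\Cl_S(L)[p]$, represent it by an ideal $\frak a$, and write $\frak a^p=l\,\cO_{L,S}$ with $l\in L^\times$; by the dictionary above $l$ represents a class of $R(L,S;p)$, which by construction maps to $[\frak a]$. That finishes the argument. The main obstacle — indeed the only non-formal point — is the local ramification criterion of the second paragraph, i.e.\ controlling the ramification of the Kummer extension $L(l^{1/p})/L$ at primes away from $p$; once that is available, the remainder is bookkeeping with ideals and the freeness of the $S$-ideal group. (If one wants the statement for an étale algebra $L$ rather than a field, the same argument applies componentwise.)
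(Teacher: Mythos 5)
Your argument is correct: the ramification dictionary at primes $\frak q\notin S$ (possible since $S$ contains all places above $p$), the map $l\mapsto[\frak a(l)]$ with $\frak a(l)^p=l\,\cO_{L,S}$, the identification of its kernel with the image of $U_S(L)$ together with $U_S(L)\cap(L^\times)^p=(U_S(L))^p$, and surjectivity from $p$-torsion classes give exactly the claimed exact sequence. The paper does not prove this proposition but quotes it from Poonen--Schaefer (Proposition 12.6), and your proof is essentially the standard argument given there, so there is nothing to add beyond noting that the \'etale-algebra case is, as you say, handled componentwise.
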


We now argue that $S$ can be reduced to places that completely split in $L$. 
Let $\frak{r}$ be a prime ideal of $L^+$ above a prime $r\in \bbZ$ and $\frak{r}_1,\ldots \frak{r}_k$
be the prime ideals of $L$ above $\frak{r}$.
The following proposition gives a condition for primes above $\frak{r}$ to be removed from $S$.
\begin{proposition} \label{S-empty}
Let $l\in L^\times$ represent an element of $\Sel_p(E/K)$ such that $(l)$ is a non-trivial ideal. Write the part of $(l)$ above $\frak{r}$ as $\prod\limits_{i=1}^k\frak{r}_i^{\alpha_i}$ for some $\alpha_i\in \bbZ$. Then $k=p-1$.
\end{proposition}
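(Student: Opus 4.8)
The plan is to extract from Theorem~\ref{gen_p_descent} the congruence that the Selmer condition forces on $l$, and then translate it into a relation between the exponents of $\frak{r}_1,\dots,\frak{r}_k$ in the ideal $(l)$. Recall that if $l\in L^\times$ represents an element of $\Sel_p(E/K)$, then its class lies in $\left(L^\times/(L^\times)^p\right)^{(1)}$, so $l^\gamma \equiv l^{\phi(\gamma)} \pmod{(L^\times)^p}$. Write $c := \phi(\gamma)\in\bbF_p^\times$; since $\phi\colon\langle\gamma\rangle\to\bbF_p^\times$ is an isomorphism and $\Gal(L/L^+)=\langle\gamma\rangle$, the residue $c$ generates $\bbF_p^\times$, so in particular $c\not\equiv 1\pmod p$. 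First I would pass to fractional ideals: from $l^\gamma = l^{c}m^p$ with $m\in L^\times$ we obtain $(l)^\gamma = (l)^{c}(m)^p$, hence $(l)^\gamma \equiv (l)^{c}$ in the free $\bbF_p$-module of fractional ideals of $L$ modulo $p$-th powers.

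Next I would record the shape of the $\langle\gamma\rangle$-orbit $\{\frak{r}_1,\dots,\frak{r}_k\}$. Since $L/L^+$ is cyclic of degree $p-1$, this is a single transitive orbit, $k$ divides $p-1$, the decomposition group of each $\frak{r}_i$ is the unique subgroup $\langle\gamma^k\rangle$ of index $k$ in $\langle\gamma\rangle$, and therefore $\gamma$ permutes the $\frak{r}_i$ as one $k$-cycle; after relabelling we may assume $\frak{r}_i^{\gamma}=\frak{r}_{i+1}$ with indices read modulo $k$.

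Now write the part of $(l)$ above $\frak{r}$ as $\prod_{i=1}^{k}\frak{r}_i^{\alpha_i}$. Applying $\gamma$ turns this into $\prod_{i=1}^k\frak{r}_i^{\alpha_{i-1}}$, while raising to the $c$-th power turns it into $\prod_{i=1}^k\frak{r}_i^{c\alpha_i}$; comparing the exponents of $\frak{r}_1,\dots,\frak{r}_k$ on both sides of $(l)^\gamma\equiv(l)^{c}$ (note that both $\gamma$ and $(-)^c$ preserve the set of primes above $\frak{r}$, and the $\frak{r}_i$ are pairwise distinct) gives $\alpha_{i-1}\equiv c\,\alpha_i\pmod p$ for every $i$. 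Because $c$ is a unit modulo $p$, this relation forces the $\alpha_i$ to be either all $\equiv 0$ or all $\not\equiv 0$ modulo $p$; the hypothesis that the $\frak{r}$-part of $(l)$ is non-trivial — which, since $l$ is only determined up to $(L^\times)^p$, means that some $\alpha_i\not\equiv 0\pmod p$ — puts us in the second case.

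Finally, chaining the relation around the $k$-cycle gives $\alpha_0\equiv c^{k}\alpha_0\pmod p$ with $\alpha_0\not\equiv 0$, hence $c^{k}\equiv 1\pmod p$. Since $c$ has order $p-1$ in $\bbF_p^\times$, this forces $(p-1)\mid k$, and together with $k\mid (p-1)$ we conclude $k=p-1$, i.e.\ $\frak{r}$ splits completely in $L$. The argument is short; the only point requiring care is the combinatorial bookkeeping of the $\gamma$-action on $\{\frak{r}_1,\dots,\frak{r}_k\}$ together with the two divisibilities $k\mid (p-1)$ and $(p-1)\mid k$, and I do not anticipate a genuine obstacle. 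As a payoff, any prime of $L^+$ that does not split completely in $L$ contributes no prime of $L$ that needs to be kept in $S$, which is exactly the reduction of $S$ announced before the statement.
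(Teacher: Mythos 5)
Your proof is correct and follows essentially the same route as the paper: both use the eigenspace condition $l^\gamma\equiv l^{\phi(\gamma)}\pmod{(L^\times)^p}$ from Theorem~\ref{gen_p_descent}, pass to the ideal $(l)$, and compare exponents along the $\gamma$-orbit of the primes above $\frak{r}$ to conclude $\phi(\gamma)^k=1$ and hence $k=p-1$. Your write-up simply makes explicit the bookkeeping the paper leaves implicit (working modulo $p$-th powers of ideals, the single $k$-cycle structure of the $\gamma$-action, and the need for some $\alpha_i\not\equiv 0\pmod p$).
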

\begin{proof}
    Recall that $\Gal(L/L^+)=\langle \gamma\rangle\stackrel{\phi}{\simeq}\bbF_p^\times$.
    Since $l$ is an element of $\Sel_p(E/K)$, we have the following equivalence modulo $p$-powers of ideals
    \[\prod\limits_{i=1}^k(\frak{r}_i^\gamma)^{\alpha_i}=\prod\limits_{i=1}^k\frak{r}_i^{\phi(\gamma)\alpha_i}.\]
    This gives $\alpha_i= \phi(\gamma)\alpha_{\gamma\cdot i}$, where the action of $\gamma$ on the indices $i$ is induced from the action of $\gamma$ on $\frak{r}_i$. It follows that $\phi(\gamma)^k=1$ and therefore $k=p-1$.
\end{proof}

\begin{remark}
It follows from Proposition \ref{S-units_PS} that the $p$-Selmer group computation requires us to compute $p$-torsion in class group of $L$, and a subgroup of the unit group of $L$ with index coprime to $p$. In our particular setup where $p=11$, $[K:\bbQ]=5$ and $[L:K]=p^2-1=120$, $[L:\bbQ]=600$. Computing class groups and unit groups in such high degree extensions is currently out of reach (even after assuming GRH).
\end{remark}

\subsection{Elliptic curves with complex multiplication}\label{subpdescentcm}
Thanks to the following result one can reduce the degree of the field extension over which we need to compute the class groups and unit-groups by using elliptic curves with complex multiplication by $\cO$ in an imaginary quadratic number field $K'$.

\begin{corollary}[Theorem 8.1 in \cite{schaefer-stoll-p-descent}]\label{p-descent_cm}
Let $E/K$ be an elliptic curve with potential complex multiplication by $\cO\subset K'$ as above.
Assume that $p$ splits in $\cO$, i.e., $p\cO=\frak{p}_1\frak{p}_2$. Let $\Delta_i\coloneqq E[\frak p_i]\setminus \{\infty\}$. Then Theorem \ref{gen_p_descent} holds if $\Delta$ and $\Delta^+$ from before are replaced by the $G_K$-sets
  $\Delta_1\sqcup \Delta_2$ and $\{\Delta_1,\Delta_2\}$ respectively. 
\end{corollary}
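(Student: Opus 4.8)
The plan is to reduce Corollary~\ref{p-descent_cm} to Lemma~5.2 of \cite{schaefer-stoll-p-descent} (our Theorem~\ref{gen_p_descent}) by replacing the $G_K$-module $E[p]$ with $E[\frak p_1]\oplus E[\frak p_2]$ and tracking how the Weil pairing interacts with the CM action. First I would note that since $p$ splits in $\cO$ as $\frak p_1\frak p_2$, the Chinese Remainder Theorem gives a $G_K$-equivariant decomposition $E[p]=E[\frak p_1]\oplus E[\frak p_2]$ (the $G_K$-equivariance uses that $E$ has CM by $\cO$ \emph{defined over $K$}, or at least that the isogenies realizing $\frak p_i$ are defined over $K$ — if the CM is only potential one must check that $K$ contains $K'$, which is part of the standing hypothesis ``as above''). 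Each $E[\frak p_i]$ is a one-dimensional $\cO/\frak p_i\cong\bbF_p$-vector space, so $\Delta_i=E[\frak p_i]\setminus\{\infty\}$ is a $G_K$-set with stabilizer data exactly analogous to $\Delta^+\subset\Delta$ in the non-CM case; in particular $L_{\Delta_i}$ plays the role that $L$ played before, and the disjoint union $\Delta_1\sqcup\Delta_2$ replaces $\Delta$, while $\{\Delta_1,\Delta_2\}$ (a two-element $G_K$-set, possibly with a swap coming from $\Gal(K'/\bbQ)$ if that is in play, but over $K$ each $\frak p_i$ is fixed) replaces $\Delta^+$.

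Next I would examine the Weil pairing. The key point is that the Weil pairing $e_p\colon E[p]\times E[p]\to\mu_p$ restricts, under the decomposition $E[p]=E[\frak p_1]\oplus E[\frak p_2]$, to a \emph{perfect} pairing $E[\frak p_1]\times E[\frak p_2]\to\mu_p$ that is trivial on each $E[\frak p_i]$ separately — this is the standard fact that $E[\frak p_1]$ and $E[\frak p_2]$ are exact annihilators of each other under $e_p$ (because $\frak p_1$ and $\frak p_2$ are the distinct primes above $p$, and the CM action is self-adjoint up to the complex conjugation $\frak p_1\leftrightarrow\frak p_2$ with respect to $e_p$). Consequently the map $w$ of Theorem~\ref{gen_p_descent}, when built from $\Delta_1\sqcup\Delta_2$, sends a point $Q=Q_1+Q_2$ to the pair of functions $(e_p(Q_1,-)|_{\Delta_2},\,e_p(Q_2,-)|_{\Delta_1})$, and this is still an injective morphism of $G_K$-modules by nondegeneracy of $e_p$ on $E[\frak p_1]\times E[\frak p_2]$. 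The generalized Hilbert~90 identification $\H^1(G_K,\mu_p^{\Delta_1\sqcup\Delta_2})\cong(L_{\Delta_1}\times L_{\Delta_2})^\times/((L_{\Delta_1}\times L_{\Delta_2})^\times)^p$ then goes through verbatim, and the ``twisted'' subgroup condition $l^\gamma=l^{\phi(\gamma)}$ is replaced by the analogous condition on each factor under $\Gal(L_{\Delta_i}/L_{\{\Delta_1,\Delta_2\}})\cong\bbF_p^\times$.

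With these two observations in place, the proof of Lemma~5.2 of \cite{schaefer-stoll-p-descent} applies mutatis mutandis: the diagram-chase establishing injectivity of $w_*$ and the computation of $\Im(w_*)$ only uses (a) that $w$ is an injective $G_K$-morphism and (b) the Galois-module structure of the target, both of which we have just arranged in the CM setting. I would therefore conclude by saying: ``The statement follows by running the argument of \cite[Lemma~5.2]{schaefer-stoll-p-descent} with $(\Delta,\Delta^+)$ replaced throughout by $(\Delta_1\sqcup\Delta_2,\{\Delta_1,\Delta_2\})$; the only inputs needed are the $G_K$-equivariant splitting $E[p]=E[\frak p_1]\oplus E[\frak p_2]$ and the fact that $e_p$ induces a perfect $G_K$-pairing $E[\frak p_1]\times E[\frak p_2]\to\mu_p$, both of which hold because $p$ splits in $\cO$ and the CM is defined over $K$.'' The main obstacle — really the only subtle point — is verifying that the Weil pairing pairs $E[\frak p_1]$ with $E[\frak p_2]$ perfectly and trivially on each factor, i.e.\ correctly identifying which étale algebra sits dual to which; this is where one must use the interaction of the CM involution with $e_p$, and it is worth writing out carefully rather than asserting, since it is precisely what makes the degree drop from $[L:K]=p^2-1$ to $[L_{\Delta_i}:K]\mid p-1$ possible.
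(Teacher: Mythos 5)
The paper does not prove this statement at all --- it is quoted verbatim as Theorem 8.1 of \cite{schaefer-stoll-p-descent} --- so the only question is whether your sketch actually establishes the result in the generality stated, and it does not. The corollary assumes \emph{potential} complex multiplication, and in every application in this paper the base field ($\bbQ$ or the totally real field $\bbQ(\zeta_{11})^+$) does not contain the imaginary quadratic field $K'$. In that situation $G_K$ surjects onto $\Gal(KK'/K)$, whose nontrivial element acts on $\cO$ by complex conjugation and therefore interchanges $\frak p_1$ and $\frak p_2$; consequently neither $E[\frak p_i]$ is a $G_K$-submodule, the splitting $E[p]=E[\frak p_1]\oplus E[\frak p_2]$ is \emph{not} a decomposition of Galois modules, and the two-element set $\{\Delta_1,\Delta_2\}$ carries a genuinely nontrivial $G_K$-action (this is exactly why the paper later has $L^+=K'$, a field, rather than $K\times K$). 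Your parenthetical claims that ``over $K$ each $\frak p_i$ is fixed'' and that $K\supset K'$ ``is part of the standing hypothesis'' are therefore wrong for the cases the corollary is designed for, and your description of the target as a product of two \'etale algebras with a separate eigenvalue condition ``on each factor'' only makes sense in that excluded split case. As written, your argument proves the corollary only when the CM (equivalently, each $\frak p_i$) is defined over $K$, which is strictly weaker than the statement and useless for the paper's computations.

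The fix is to avoid the Galois-stable splitting altogether: the map $w\colon E[p]\to\mu_p^{\Delta_1\sqcup\Delta_2}$, $Q\mapsto e_p(Q,-)$, is $G_K$-equivariant purely by the Galois compatibility of the Weil pairing, irrespective of whether $G_K$ permutes the $\Delta_i$, and it is injective because $E[\frak p_1]\cup E[\frak p_2]$ generates $E[p]$ and $e_p$ is nondegenerate (your observation that each one-dimensional $E[\frak p_i]$ is isotropic, so that $E[\frak p_1]$ and $E[\frak p_2]$ pair perfectly with each other, is correct and is the useful part of your sketch). One then runs the argument of \cite[Lemma 5.2]{schaefer-stoll-p-descent} with the $G_K$-sets $\Delta_1\sqcup\Delta_2$ and $\{\Delta_1,\Delta_2\}$, where $\Gal(L/L^+)\simeq\bbF_p^\times$ now comes from the scalar action on each line $E[\frak p_i]$; this is precisely what Schaefer--Stoll do in their Section 8, which is why the paper simply cites their Theorem 8.1. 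A secondary caution: injectivity of $w_*$ on $\H^1$ is not a formal consequence of injectivity of $w$ plus ``the Galois-module structure of the target,'' so the sentence claiming the original proof only needs those two inputs understates what must be checked when transporting the argument.
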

\begin{remark} \label{deg_too_high}
With $\Delta$ and $\Delta^+$ as in the above corollary, $[L:K]=2(p-1)=20$ and we would need to compute class groups and unit groups of a degree 100 number field. Though, we have cut down the degree from 600 to 100, the class group and unit group computations are still infeasible if done naively. The content of \S\ref{classgp_unitgp} is to develop a procedure using \cite{norm_rel} that cuts down the class groups and unit groups computation to that of degree 20 number fields. 
\end{remark}

\subsection{Computing S-units and S-class groups} \label{classgp_unitgp}~

We briefly sketch the strategy and then concretely lay out the procedure we used. 
Let $E/\bbQ$ be an elliptic curve with complex multiplication by an order $\cO$ of an
imaginary quadratic field $K'/\bbQ$. Let $K$ be a number field with 
$\Gal(K/\bbQ)= \langle \tau \rangle \simeq C_q$.  Recall from Corollary \ref{p-descent_cm} that to each $E/\bbQ$ as above one associates $\Delta$ and $\Delta^+$, and the \'etale algebras $L$ and $L^+$ (respectively) with $L/L^+$ Galois of degree $p-1$. Note that in our case $L^+=K'$. 
We assume that $E/\bbQ$ satisfies $[LK:\bbQ]=2q(p-1)$, which holds generically. 
We want to calculate the
$S$-class group and the $S$-unit group associated with
$LK$ for some finite set of places $S$. As remarked in Remark \ref{deg_too_high}, the degree is still too high to compute the $S$-class groups and $S$-unit groups. 

Therefore, the major contribution of this section is to break down the computation
of $S$-class group and unit group into that of $[K:\bbQ]-1$ number fields of degree $2(p-1)$. 
Assume that $p \ge 5$ and $q | p-1$. Our procedure to compute the $p$-Selmer group for $E/K$ in that case consists of two reduction steps and a key computational step.  
\begin{enumerate}
  \item First reduction step: We use \cite[Theorem 8.1]{schaefer-stoll-p-descent} and Proposition \ref{S-empty}, to reduce $S$ to the primes above $p$ in $LK$ that completely splits. 
  \item Second reduction step: By Theorem \ref{gen_p_descent}
 we can discard the elements in $LK^\times/(LK^\times)^p$ that are represented by elements in $L^\times$ or $K^\times$. 
  \item Computational step: Using the method of norm relations as in \cite{norm_rel} we exhibit $q-1$ subfields $F_1, \ldots F_{q-1}\subset LK$ of degree $2(p-1)$ over $\bbQ$, compute their $S$-class groups and $S$-unit groups, and show that the elements of $\Sel_p(E/K)/\Sel_p(E/\bbQ)$ can be obtained from those.
\end{enumerate}

\begin{remark}\label{remarksteps}\thmbreakheader
\begin{enumerate}
\item Step (1) is specific to the setup of $E/K$ having CM while step (2) follows from the fact that we are only interested in elements of $\Sel_p(E/K)/\Sel_p(E/\bbQ)$. 
    \item If one wants to compute $\Sel_p(E/\bbQ)$, then one only discards elements represented by elements in $K^\times$ in Step (2). However, an extra subfield corresponding to $L$ is added in Step (3).
    \item In practice, one can choose the set $S$ to be empty in Step (1) in most cases. Indeed by Proposition \ref{S-empty} only primes that split completely in $LK/KK'$ remain in $S$.  
\end{enumerate}
\end{remark}

We now detail Step (3).
Recall that $\Gal(L/K')\simeq \bbF_p^\times$ and $q\ |\ p-1$.
Therefore, there is a subfield $F\subset L$ such that $K'\subset F\subset L$ and
$\Gal(L/F)=\langle \sigma \rangle\simeq C_q$. Furthermore, $C_q\times C_q \simeq 
\langle\sigma,\tau\rangle \subset \Aut(LK)$. 
Let $G_i\coloneqq \langle\sigma^i\tau\rangle$ for $0\leq i\leq q-1$ be the 
subgroups of order $q$ inside $\Aut(LK)$, and let $F_i\coloneqq (LK)^{G_i}$, and $F_q
    \coloneqq FK$. Note that $F_0=L$.

Let $S=\{\frak{p}\ |\  \frak{p}\text{ a prime above $p$ in $\cO_{LK}$}\}$, $S_i\coloneqq \{\frak{p}\cap \cO_{F_i}\ |\ \frak{p}\in S\}$ for $0\leq i\leq q$, and $S_F\coloneqq \{\frak{p}\cap \cO_{F}\ |\ \frak{p}\in S\}$.
Let 
\[N_{F_i/F} :\frac{U_{S_i}(F_i)}{U_{S_i}(F_i)^p} \to \frac{U_{S_F}(F)}{U_{S_F}(F)^p}\] be the map
induced by the usual norm map $F_i\to F$. Since $p\ \nmid\ [F_i:F]=q$, the composition $U_{S_F}(F)/U_{S_F}(F)^p \stackrel{\iota}{\to}
U_{S_F}(F_i)/U_{S_F}(F_i)^p \stackrel{N_{F_i/F}}{\to} U_{S_F}(F)/U_{S_F}(F)^p$ is an isomorphism (where $\iota$ is induced by the natural inclusion of $F\subset F_i$). In particular, 
$\iota$ splits the exact sequence 
\[1\to \ker(N_{F_i/F})\to U_{F_i}/U_{S_i}(F_i)^p\to U_{S_F}(F)/ U_{S_F}(F)^p\to 1.\] Therefore \[U_{S_i}(F_i)/U_{S_i}(F_i)^p \simeq U_{S_F}(F)/U_{S_F}(F)^p \oplus\ker(N_{F_i/F}).\] 

Since the imaginary quadratic number field $K'$ satisfies the containement $K'\subset F\subset F_i$ for $0\leq i\leq q$, 
using Dirichlet's unit theorem one computes
    \[\dim_{\bbF_p}(\ker(N_{F_i/F})) =  \frac{p-1}{2}(1-1/q)+\sum\limits_{j\in\{1,2\}}r_{\frak{p}_j,F_i}-r_{\frak{p}_j,F},\] 
    where $r_{\frak{p}_j,-}$
    is the number of prime ideals of $\cO_-$ above the ideal $\frak{p}_j$ of $\cO$ above $p$.
The following proposition shows that elements of $U_S(LK)/U_S(LK)^p$ can be represented by elements of $U_{S_F}(F)$ and $\ker(N_{F_i/F})$ for $0\leq i\leq q$.
    \begin{proposition} \label{unitgp_decomp}
      Let $L$, $K$, $F_i$ and $F$ be as above. Then 
      \[U_S(LK)/U_S(LK)^p \simeq U_{S_F}(F)/U_{S_F}(F)^p\oplus\bigoplus\limits_{i=0}^q \ker(N_{F_i/F}).\]
    \end{proposition}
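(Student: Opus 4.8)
The plan is to generalize the norm-argument splitting that was just carried out for the single subfield $F_i$ over $F$, by iterating it over the compositum. I would first set up the action of the group $C_q \times C_q \simeq \langle \sigma, \tau\rangle \subset \Aut(LK)$ on $U_S(LK)/U_S(LK)^p$, regarded as an $\bbF_p$-vector space. Since $p \nmid q^2 = |\langle\sigma,\tau\rangle|$, Maschke's theorem applies: the module decomposes as a direct sum of isotypic components indexed by the characters of $C_q\times C_q$ over $\bbF_p$ (these exist since $q \mid p-1$, so $\bbF_p$ contains the $q$-th roots of unity). The subgroups $G_i = \langle \sigma^i\tau\rangle$ for $0 \le i \le q-1$, together with $G_q$ corresponding to $F_q = FK$ (equivalently the "diagonal-type" subgroup fixing $F$ inside $L$ paired with $K$), are exactly the $q+1$ order-$q$ subgroups of $C_q\times C_q$, and $F_i = (LK)^{G_i}$.

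The key observation is that averaging over $G_i$ (using $\frac{1}{q}\sum_{g\in G_i} g$, which makes sense after inverting $q$) gives an idempotent whose image is $U_{S_i}(F_i)/U_{S_i}(F_i)^p$ viewed inside $U_S(LK)/U_S(LK)^p$ via the inclusion. The trivial-character component of the whole module is $U_{S_F}(F)/U_{S_F}(F)^p$ (the part fixed by all of $\langle\sigma,\tau\rangle$; note $F = (LK)^{\langle\sigma,\tau\rangle}$). Then I would argue that the non-trivial isotypic components of $U_S(LK)/U_S(LK)^p$ are distributed among the $F_i$ as follows: every non-trivial character $\psi$ of $C_q\times C_q$ is trivial on exactly one of the $q+1$ order-$q$ subgroups $G_i$ (since a non-trivial character of $C_q\times C_q$ has kernel of order exactly $q$, and that kernel is one of the $G_i$). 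Conversely, the $\psi$-component of $U_{S_i}(F_i)/U_{S_i}(F_i)^p$ is non-zero precisely when $\psi$ is trivial on $G_i$. Combining with the splitting $U_{S_i}(F_i)/U_{S_i}(F_i)^p \simeq U_{S_F}(F)/U_{S_F}(F)^p \oplus \ker(N_{F_i/F})$ already established, $\ker(N_{F_i/F})$ is exactly the sum of those isotypic components of the ambient module on which $G_i$ acts trivially but $\langle\sigma,\tau\rangle$ does not. Since the index set of such characters partitions the set of all non-trivial characters as $i$ ranges over $0,\dots,q$, summing yields
\[
U_S(LK)/U_S(LK)^p \simeq U_{S_F}(F)/U_{S_F}(F)^p \oplus \bigoplus_{i=0}^{q}\ker(N_{F_i/F}),
\]
as claimed. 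I would make the identification of $\ker(N_{F_i/F})$ with a subspace of the ambient module precise via the splitting map $\iota$ (so that each summand genuinely sits inside $U_S(LK)/U_S(LK)^p$ and the sum is internal).

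The main obstacle I anticipate is the bookkeeping needed to verify that the isotypic components really are "partitioned" — i.e., that no non-trivial character is trivial on two distinct $G_i$ (immediate, since two distinct order-$q$ subgroups generate all of $C_q\times C_q$) and that the counting of characters versus subgroups lines up (there are $q^2-1$ non-trivial characters, $q+1$ subgroups, and each subgroup $G_i$ "absorbs" the $q-1$ non-trivial characters of the quotient $(C_q\times C_q)/G_i \simeq C_q$, giving $(q+1)(q-1) = q^2-1$, which checks out). A secondary technical point is confirming that passing to $S$-units commutes with the Galois descent used above — that $U_{S_i}(F_i)/U_{S_i}(F_i)^p$ is indeed the $G_i$-fixed subspace of $U_S(LK)/U_S(LK)^p$ and not merely contained in it; this uses that $S$ is the full set of primes above $p$ in $LK$ (hence Galois-stable) together with the fact that taking $G_i$-invariants is exact on $\bbF_p[G_i]$-modules since $p \nmid q$. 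These are all routine once the representation-theoretic framework is in place.
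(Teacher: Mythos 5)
Your proof is correct, but it follows a genuinely different route from the paper. You decompose $U_S(LK)/U_S(LK)^p$ into isotypic components for the $q^2$ characters of $\langle\sigma,\tau\rangle\simeq C_q\times C_q$ (available since $q\mid p-1$ and $p\nmid q^2$), identify $U_{S_i}(F_i)/U_{S_i}(F_i)^p$ with the $G_i$-invariants, and observe that each non-trivial character is trivial on exactly one of the $q+1$ order-$q$ subgroups, so that $\ker(N_{F_i/F})$ collects precisely the characters with kernel $G_i$ and the non-trivial characters are partitioned among the $F_i$. The paper instead proves the statement by a dimension count: it uses the formula for $\dim_{\bbF_p}\ker(N_{F_i/F})$ coming from Dirichlet's unit theorem, carefully tracks how the primes of $S$ above $p$ split in $F$, the $F_i$ and $LK$ to verify that the $\bbF_p$-dimensions of the two sides agree, and then argues that the summands intersect pairwise trivially (an element killed by two distinct norms is represented in $U_{S_F}(F)$). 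Your argument buys a canonical, case-free decomposition that sidesteps the splitting-behaviour bookkeeping entirely and meshes well with the eigenspace language used later (Proposition on $U(LK)$ versus $U(F_i)$ and Remark on eigenspace searches); the paper's approach yields the explicit dimension formulas as a by-product. Two small points to tighten: the identification of the $G_i$-fixed subspace with $U_{S_i}(F_i)/U_{S_i}(F_i)^p$ (and likewise for $F$) deserves the explicit norm argument — for $\bar w$ fixed, $N_{LK/F_i}(w)^a\equiv w$ with $a\cdot q\equiv 1 \pmod p$ — rather than only the exactness of invariants, and the phrase ``the $\psi$-component of $U_{S_i}(F_i)/U_{S_i}(F_i)^p$ is non-zero precisely when $\psi$ is trivial on $G_i$'' should read that only characters trivial on $G_i$ can occur there (individual components may well vanish); neither affects the validity of your argument.
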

    \begin{proof}
      We first check that the groups on both sides have the right $\bbF_p$-dimension. The total dimension on the right hand side is
      \begin{align*}
      &\frac{p-1}{2q} -1 + \frac{p-1}{2}(1-1/q)(q+1)+ r_{\frak{p}_1,F}+r_{\frak{p}_2, F}+\sum\limits_{j=1}^2\sum\limits_{i=0}^q\left(r_{\frak{p}_j,F_i}-r_{\frak{p}_j,F}\right)\\ &=\frac{q(p-1)}{2}-1 + r_{\frak{p}_1,F}+r_{\frak{p}_2, F}+ \sum\limits_{j=1}^2\sum\limits_{i=0}^q\left(r_{\frak{p}_j,F_i}-r_{\frak{p}_j,F}\right).
      \end{align*}
      If a prime $\frak{q}$ of $\cO_{F}$ splits in $\cO_{F_i}$, one has $r_{\frak{p}_j, F_i}=qr_{\frak{p}_j, F}$. If it does not, one has
      $r_{\frak{p}_j, F_i}=r_{\frak{p}_j, F}$. Furthermore, if $r_{\frak{p}_j, F_i}=qr_{\frak{p}_j, F}$ for two distinct $i$, then $r_{\frak{p}_j, F_i}=qr_{\frak{p}_j, F}$ for all $i$ and $r_{\frak{p}_j,LK}=q^2r_{\frak{p}_j, F}$. In this case one obtains
      \[r_{\frak{p}_j,F}+\sum\limits_{i=0}^q\left(r_{\frak{p}_j,F_i}-r_{\frak{p}_j,F}\right)=r_{\frak{p}_j,F}+(q^2-1)r_{\frak{p}_j, F}=q^2r_{\frak{p}_j, F}=r_{\frak{p}_j, LK}.\]
      Similarly, one checks that the equality $r_{\frak{p}_j,F}+\sum\limits_{i=0}^q\left(r_{\frak{p}_j,F_i}-r_{\frak{p}_j,F}\right)=r_{\frak{p}_j, LK}$ holds when $r_{\frak{p}_j,F_i}=qr_{\frak{p}_j, F}$ for exactly one $i$, and when $r_{\frak{p}_j,F_i}=r_{\frak{p}_j, F}$. Therefore, the $\bbF_p$-dimension matches on both sides.
      
      If each of the
      groups in the summand on the right-hand side intersects the other trivially, we are
      done. To see this, note that if $u\in \ker(N_{F_i/F}\cap N_{F_j/F})$ with 
      $i\ne j$, then $u$ can be represented by an element in $U_{S_F}(F)$ but then $u\ne
      \ker(N_{F_i/F})$. Lastly, $U_{S_F}(F)/U_{S_F}(F)^p$ intersects trivially with each
      $\ker(N_{F_i/F})$.
    \end{proof}

Now we look at the class group contribution. The arguments are  similar as
for the unit groups. We once again have the following exact sequence
\[0\to \Cl(F_i)[p]\to \Cl(LK)[p]\stackrel{N_{LK/F_i}}{\to} \Cl(F_i)[p]\to 0,\] where $N_{LK/F_i}$ is the usual norm map on ideal classes. 
We obtain a similar sequence when $F_i$ is replaced by $F$. The following
proposition shows
that $\Cl(LK)[p]$ is generated by $\Cl(F_i)[p]$ and $\Cl(F)[p]$. 

\begin{proposition}\label{ClassGroupreduceF}
  Let $\frak{p}$ be an ideal in the maximal order $\cO_{LK}$ of $LK$ representing a non-trivial 
  element in
  $\Cl(LK)[p]$. Then there is a field $F_i$ for $0\leq i\leq q$ such that $N_{LK/F_i}([\frak{p}])\ne 0$ or $N_{LK/F}([\frak{p}])\ne 0.$ 
  
  In particular, we obtain
  that $[\frak{p}]$ is in the image of $\Cl(F_i)[p]\to \Cl(LK)[p]$ or $\Cl(F)[p]\to \Cl(LK)[p]$.  
\end{proposition}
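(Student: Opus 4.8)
The plan is to make $G\coloneqq\Gal(LK/F)$ act on $C\coloneqq\Cl(LK)[p]$ and reduce to the (elementary) representation theory of $G$. From the setup recalled above, $\langle\sigma,\tau\rangle\cong C_q\times C_q$ lies in $\Aut(LK)$, fixes $F$, and $[LK:F]=q^2$, so $G=\langle\sigma,\tau\rangle$ and $F=(LK)^G$; its $q+1$ subgroups of order $q$ are $\langle\sigma\rangle$ and $G_i\coloneqq\langle\sigma^i\tau\rangle$ for $0\le i\le q-1$, with fixed fields $FK=F_q$ and $F_i$ respectively (recall $F_0=L$), so $\{F_0,\dots,F_q\}$ is exactly the set of index-$q$ subfields of $LK$ over $F$. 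Since $p\nmid|G|=q^2$ and $q\mid p-1$, the group algebra $\bbF_p[G]$ is split semisimple, hence $C=\bigoplus_\chi C_\chi$ splits into isotypic components indexed by the characters $\chi\colon G\to\bbF_p^\times$.

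Next I would invoke the norm--corestriction relations for the tame subextensions at play. For a subgroup $H\le G$ with fixed field $M=(LK)^H$, the composite $\Cl(LK)\xrightarrow{\,N_{LK/M}\,}\Cl(M)\xrightarrow{\,j_M\,}\Cl(LK)$ is multiplication by $\nu_H\coloneqq\sum_{h\in H}h$, and $N_{LK/M}\circ j_M$ is multiplication by $|H|$. Because $|H|\in\{q,q^2\}$ is prime to $p$, on $p$-torsion $j_M$ is injective and $\Im(j_M)=\Im(\nu_H)$. On a component $C_\chi$ the operator $\nu_H$ acts as multiplication by $|H|$ if $H\subseteq\ker\chi$ and as $0$ otherwise (a nontrivial character of $H$ sums to $0$ in $\bbF_p$, using $q\mid p-1$), whence $\Im(j_M)=\Im(\nu_H)=C^H=\bigoplus_{H\subseteq\ker\chi}C_\chi$. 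Applying this with $M=F_i$ (so $H=G_i$ or $H=\langle\sigma\rangle$) and with $M=F$ (so $H=G$) identifies the images of $\Cl(F_i)[p]$ and $\Cl(F)[p]$ in $C$ with $C^{\Gal(LK/F_i)}$ and $C^{G}$ respectively.

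Finally I would conclude. Given $0\ne[\mathfrak p]\in C$, choose $\chi$ with $[\mathfrak p]_\chi\ne0$. If $\chi$ is nontrivial, then $G/\ker\chi$ embeds into $\bbF_p^\times$ with order dividing $q$, so $\ker\chi$ is one of the order-$q$ subgroups above, say with fixed field $F_j\in\{F_0,\dots,F_q\}$; then $\nu_{\ker\chi}([\mathfrak p])$ has nonzero $\chi$-component $|\ker\chi|\,[\mathfrak p]_\chi$, so $\nu_{\ker\chi}([\mathfrak p])\ne0$, and injectivity of $j_{F_j}$ on $p$-torsion forces $N_{LK/F_j}([\mathfrak p])\ne0$. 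If $\chi$ is trivial, then $\nu_G([\mathfrak p])=|G|\,[\mathfrak p]_\chi\ne0$, giving $N_{LK/F}([\mathfrak p])\ne0$. This yields the first assertion; and since every character of $G$ is trivial on at least one of $\langle\sigma\rangle,G_0,\dots,G_{q-1},G$, the images of $\Cl(F_0)[p],\dots,\Cl(F_q)[p]$ and $\Cl(F)[p]$ span $C$, with $F$ (of the much smaller degree $2(p-1)/q$) contributing the invariant part $C^{G}$; in the situation relevant to the algorithm, where one has already localized to a single isotypic component (an eigenspace for the $G$-action), the argument above shows outright that $[\mathfrak p]$ lies in the image of a single $\Cl(F_j)[p]$, or of $\Cl(F)[p]$ when that component is trivial. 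I expect the main obstacle to be the Galois-theoretic bookkeeping in the first paragraph — pinning down $\Gal(LK/F)\cong C_q\times C_q$ and the exact dictionary between its $q+1$ order-$q$ subgroups and the fields $F_0,\dots,F_{q-1},FK,F$ — together with checking the hypotheses ($q\mid p-1$, $[LK:\bbQ]=2q(p-1)$, $p$ coprime to the relevant degrees) under which the norm--corestriction splitting and the semisimple decomposition are available.
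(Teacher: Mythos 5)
Your argument is correct, but it takes a genuinely different route from the paper's. The paper proves the first assertion by contradiction directly at the level of ideals: if all the norms $N_{LK/F_i}([\mathfrak p])$ ($0\le i\le q$) and $N_{LK/F}([\mathfrak p])$ were trivial, then each $\prod_{g\in G_i}\mathfrak p^{g}$ would be principal, and since the $q+1$ order-$q$ subgroups of $\Gal(LK/F)\cong C_q\times C_q$ cover every non-identity element exactly once, the product of these ideals equals $\mathfrak p^{\,q}\cdot N_{LK/F}(\mathfrak p)\cO_{LK}$; hence $\mathfrak p^{\,q}$ would be principal, contradicting that $[\mathfrak p]$ has order $p$ with $\gcd(p,q)=1$. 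You instead decompose $\Cl(LK)[p]$ into character eigenspaces for $\Gal(LK/F)$ and use the norm--corestriction relations to identify the images of $\Cl(F_i)[p]$ and $\Cl(F)[p]$ with the corresponding invariant subspaces, then pick a nonvanishing component of $[\mathfrak p]$. The two proofs rest on the same combinatorial fact (every nontrivial character of $C_q\times C_q$ is trivial on exactly one of the $q+1$ order-$q$ subgroups, which is equivalent to the covering identity the paper multiplies out), but the mechanics differ: the paper's version is leaner in hypotheses, needing only $p\nmid q$ rather than $q\mid p-1$ and the splitting of $\bbF_p[\Gal(LK/F)]$, whereas yours buys more, since it pinpoints which $F_i$ detects which eigencomponent (exactly what is exploited later via Proposition \ref{U(LK)U(Fi)}) and it makes precise the loosely worded ``in particular'' of the statement --- what is really proved there is that the images of the $\Cl(F_i)[p]$ and $\Cl(F)[p]$ together generate $\Cl(LK)[p]$, with membership in a single image guaranteed once one restricts to an isotypic component, as you note.
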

\begin{proof}
  Assume that for every field $F_i$ and $F$, $N_{LK/F_i}([\frak{p}])$ and
  $N_{LK/F}([\frak{p}])$ are trivial. This implies that 
  \[[\frak{p}]\in\bigcap\limits_{i=0}^q\ker(N_{LK/F_i}),\] and hence that
  \[\prod\limits_{i=0}^q \prod\limits_{g\in G_i}\frak{p}^g\] is a principal ideal. But 
  \[\prod\limits_{i=0}^q \prod\limits_{g\in G_i}\frak{p}^g=
  \frak{p}^q\prod\limits_{g\in \Gal(LK/F)} \frak{p}^g=\frak{p}^qN_{LK/F}(\frak{p})\]
 implies that $\frak{p}^q$ is a principal ideal since $N_{LK/F}([\frak{p}])$ is principal. However, the
  order of $[\frak{p}]$ is $p$, hence $\frak{p}$ is a principal ideal. 
\end{proof}
\subsection{Galois module structure of $\Sha(E/K)[p]$}\label{subgaloismodule}
In this section we consider the action of $\Gal(K/\bbQ)$ on $\Sha(E/K)[p]$. Recall from Section \ref{classgp_unitgp} that $\Gal(K/\bbQ)=\langle \tau\rangle \iso C_q$, where $q\ |\ p-1$ (unless stated otherwise). It follows that each $\Sha(E/K)[p]_{\theta}$ as defined in Section \ref{sec:conjGalSha}, is a 1-dimensional $\bbF_p$-representation with $h_\theta=t-\alpha_\theta$, where $\alpha_\theta$ is a primitive $q$th root of unity in $\bbF_p$. Any element $s\in(\Sha(E/K)[p])_\theta$ satisfies $s^\tau=\alpha_\theta s$. For a given $s\in (\Sha(E/K)[p]_\theta)$, the following proposition computes an $1\leq i\leq q-1$ such that $s$ is represented by an element in $F_i$.

\begin{proposition}\label{U(LK)U(Fi)}
  Let $\phi: \Gal(L/K')\to
  \bbF_p^\times$ be the group homomorphism  defined in Theorem \ref{gen_p_descent}.
  We have
  \[U_S(LK)/U_S(LK)^p\cap\frac{\Sel_p(E/K)}{\Sel_p(E/\bbQ)}=\bigoplus\limits_{\theta}U_S(LK)/U_S(LK)^p\cap\left(\frac{\Sel_p(E/K)}{\Sel_p(E/\bbQ)}\right)_\theta\subset\bigoplus\limits_{i=1}^{q-1}\ker(N_{F_i/F}).\]
  Moreover, if $u\in \left(\frac{\Sel_p(E/K)}{\Sel_p(E/\bbQ)}\right)_\theta$ with $\alpha_\theta=\phi(\sigma)^{-i}$, then
  $u$ can be represented by an element in $\ker(N_{F_i/F})$. 
\end{proposition}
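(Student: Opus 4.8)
The plan is to combine three ingredients: the eigenspace decomposition of $\Sel_p(E/K)/\Sel_p(E/\bbQ)$ as an $\bbF_p[\Gal(K/\bbQ)]$-module, the Schaefer--Stoll constraint of Theorem \ref{gen_p_descent} (in the CM form of Corollary \ref{p-descent_cm}), and the splitting of Proposition \ref{unitgp_decomp}. For the first displayed equality, note that $q=[K:\bbQ]$ is prime to $p$ and $\bbF_p\supseteq\mu_q$, so $\bbF_p[\langle\tau\rangle]$ is split semisimple; inflation--restriction (using $p\nmid q$) identifies the trivial $\tau$-eigenspace of $\Sel_p(E/K)$ with $\Sel_p(E/\bbQ)$, whence $\Sel_p(E/K)/\Sel_p(E/\bbQ)=\bigoplus_\theta(\cdots)_\theta$ over the nontrivial characters $\theta$. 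Since $U_S(LK)/U_S(LK)^p$ is a $\langle\tau\rangle$-submodule, applying the idempotents $\pi_\theta\in\bbF_p[\langle\tau\rangle]$ commutes with intersecting, which yields the equality; this part is routine.

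For the ``moreover'' assertion, fix a nontrivial $\theta$ and $u\in(\Sel_p(E/K)/\Sel_p(E/\bbQ))_\theta$, so $u^\tau\equiv u^{\alpha_\theta}\pmod{(LK^\times)^p}$, while Theorem \ref{gen_p_descent} gives $u^\sigma\equiv u^{\phi(\sigma)}\pmod{(LK^\times)^p}$, where $\sigma$ generates $\Gal(L/F)$. Hence $u^{\sigma^i\tau}\equiv u^{\phi(\sigma)^i\alpha_\theta}\pmod{(LK^\times)^p}$, so $u$ is $G_i$-invariant modulo $p$-th powers exactly when $\alpha_\theta=\phi(\sigma)^{-i}$; as $\phi(\sigma)$ has exact order $q$ there is a unique such $i\in\{1,\dots,q-1\}$. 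For that $i$ the $G_i$-average $\tilde u:=\prod_{g\in G_i}u^g$ lies in $F_i^\times$, and since $|G_i|=q$ is prime to $p$ it satisfies $\tilde u\equiv u^q$ modulo $p$-th powers, so $\tilde u^m\equiv u\pmod{(LK^\times)^p}$ where $mq\equiv1\pmod p$; thus $u$ is represented by the $S_i$-unit $\tilde u^m$ of $F_i$.

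To place $\tilde u^m$ inside $\ker(N_{F_i/F})$ I would compute its norm. Writing $\Gal(LK/F)=\langle\sigma,\tau\rangle$ as the disjoint union of the cosets $\sigma^aG_i$ ($0\le a\le q-1$) gives $N_{F_i/F}(\tilde u)=\prod_{a=0}^{q-1}\tilde u^{\sigma^a}=N_{LK/F}(u)$ in $LK^\times$, and using the two congruences above,
\[N_{LK/F}(u)\equiv u^{\left(\sum_{c=0}^{q-1}\phi(\sigma)^c\right)\left(\sum_{b=0}^{q-1}\alpha_\theta^b\right)}\equiv 1\pmod{(LK^\times)^p},\]
since the full geometric sum of a nontrivial $q$-th root of unity vanishes in $\bbF_p$. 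Thus $N_{F_i/F}(\tilde u^m)=N_{LK/F}(u)^m$ is a $p$-th power in $LK^\times$; being an element of $F^\times$ with $[LK:F]=q^2$ prime to $p$, it is a $p$-th power in $F^\times$, in fact in $U_{S_F}(F)^p$, so $[\tilde u^m]\in\ker(N_{F_i/F})$. Finally, $\theta\mapsto i(\theta)$ (with $\alpha_\theta=\phi(\sigma)^{-i(\theta)}$) is a bijection onto $\{1,\dots,q-1\}$, so by Proposition \ref{unitgp_decomp} the sum $\bigoplus_{i=1}^{q-1}\ker(N_{F_i/F})$ is an internal direct summand of $U_S(LK)/U_S(LK)^p$ receiving the various $\theta$-pieces, which gives the displayed inclusion.

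The step I expect to need the most care is justifying $u^\sigma\equiv u^{\phi(\sigma)}\pmod{(LK^\times)^p}$ for Selmer classes after base change from $\bbQ$ to $K$: one must check that the subgroup $\bigl(LK^\times/(LK^\times)^p\bigr)^{(1)}$ of Theorem \ref{gen_p_descent}/Corollary \ref{p-descent_cm} is cut out by the \emph{same} character $\phi$ of $\Gal(L/K')$, and that this is compatible with the residual action of $\langle\tau\rangle=\Gal(K/\bbQ)$ on $LK^\times/(LK^\times)^p$. The descent from congruences in $LK^\times$ to congruences in $F^\times$ is the other potential pitfall, but it is dispatched by the coprimality of $[LK:F]$ with $p$.
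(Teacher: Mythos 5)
Your proof is correct and follows essentially the same route as the paper: both rest on the two congruences $u^\sigma\equiv u^{\phi(\sigma)}$ and $u^\tau\equiv u^{\alpha_\theta}$ together with the decomposition of Proposition \ref{unitgp_decomp}. The only difference is one of explicitness — where the paper simply reads off from how $\sigma$ and $\tau$ act on the summands that the $\theta$-component must sit in $\ker(N_{F_i/F})$ with $\alpha_\theta=\phi(\sigma)^{-i}$, you construct the representative directly by $G_i$-averaging and verify the norm-kernel condition by the vanishing geometric sum, which is a fleshed-out version of the same argument.
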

\begin{proof}
  Recall that $\langle \sigma\rangle =\Gal(L/F)$ and $G_i=\Gal(F_i/F)=\langle \sigma^i\tau\rangle$ for $1\leq i\leq q-1$. We note that each of the $\ker(N_{F_i/F})$ is stable under the action of the image of $\sigma$ in $\langle\sigma,\tau\rangle/G_i \simeq \Gal(F_i/F)$ and that by definition, $\sigma$ acts trivially on
$\ker(N_{F_q/F})$ and on $U(F)/U(F)^p$. This along with Proposition \ref{unitgp_decomp} implies the first part of the proposition. 
  Now if $u$ as above represents an element in $p$-Selmer group then
  $u^\sigma=u^{\phi(\sigma)}$, which along with $u^\tau = \alpha_\theta\tau$ proves the result. 
\end{proof}

The next proposition shows that if $E/\bbQ$ has good reduction at $p$, then we can further restrict $S$.
\begin{proposition}\label{reduce-S}
Let $E/\bbQ$ and $K$ be as above. If $E/\bbQ$ has good reduction at $p$, then exactly one of the primes in $\cO$ above $p$ (say $\frak{p}_1$) is unramified and the other one (say $\frak{p}_2$) is totally ramified  in $\cO_L$ and hence in $\cO_F$.
Furthermore, one can choose $S=\{\frak{p}\ |\ \frak{p}\text{ an prime ideal in $\cO_{LK}$ above $\frak{p}_1$}\}$ in order to compute $\Sel_p(E/K)$.
\end{proposition}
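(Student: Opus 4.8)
The plan is to exploit the complex-multiplication structure and the theory of $\frak{p}$-division fields. First I would recall that since $E/\bbQ$ has CM by $\cO\subset K'$ and $p$ splits in $\cO$ as $p\cO=\frak{p}_1\frak{p}_2$, the field $L=L_{\Delta_1\sqcup\Delta_2}$ decomposes (after adjoining $K'$) into the pair of $\frak{p}_i$-torsion fields, and $L^+=K'$. Each factor $L_i=K'(E[\frak{p}_i])$ is an abelian extension of $K'$ of degree dividing $p-1$, cut out by the Hecke character attached to $E$. Because $E/\bbQ$ has good reduction at $p$, the CM curve has good reduction at both primes of $K'$ above $p$: at one of them (the one corresponding to $\frak{p}_1$, say) the reduction is ordinary from the point of view of $\frak{p}_1$-torsion, so the formal group is étale on $\frak{p}_1$-torsion and the extension $K'(E[\frak{p}_1])/K'$ is unramified above $\frak{p}_1$; at $\frak{p}_2$ the $\frak{p}_2$-torsion sits entirely in the formal group, so $K'(E[\frak{p}_2])/K'$ is totally ramified above $\frak{p}_2$ of degree $p-1$. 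This is the standard dichotomy for the two primes of a split CM prime and is exactly the statement that $\frak{p}_1$ is unramified and $\frak{p}_2$ is totally ramified in $\cO_L$; since $F$ is the subfield of $L$ fixed by $\langle\sigma\rangle$ with $[L:F]=q$ coprime to the ramification picture only through the tame index, the same ramification behavior descends to $\cO_F$ (the totally ramified prime $\frak{p}_2$ stays totally ramified in the subextension $F/K'$, and $\frak{p}_1$ stays unramified).

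Next I would turn to the reduction of $S$. By definition $S$ consists of the primes of $LK$ above $p$ together with primes of bad reduction of $E/K$; since $\frak{f}_E$ is coprime to $p$ and $K/\bbQ$ is chosen with the relevant coprimality, the only primes of $LK$ above $p$ are those above $\frak{p}_1$ and those above $\frak{p}_2$. The key point is that primes above $\frak{p}_2$ can be discarded. For this I would invoke Proposition \ref{S-empty}: if $l\in L^\times$ represents a class in $\Sel_p(E/K)$ and the ideal $(l)$ has nontrivial part above a prime $\frak{r}$ of $L^+=K'$, then $\frak{r}$ splits completely in $L/L^+$, i.e. $\frak{r}$ has exactly $p-1$ primes above it in $L$. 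But $\frak{p}_2$ is totally ramified in $L/K'$, so it has exactly one prime above it, not $p-1$ (and $p-1>1$ since $p\geq5$). Hence no element of $\Sel_p(E/K)$ can have nontrivial valuation at the prime above $\frak{p}_2$, so those primes contribute nothing to $R(L,S;p)$ and may be removed from $S$. Passing back up to $LK$, the primes of $LK$ above $\frak{p}_2$ are likewise removable, leaving $S=\{\frak{p}\mid \frak{p}$ a prime of $\cO_{LK}$ above $\frak{p}_1\}$, which is the asserted conclusion.

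I expect the main obstacle to be the bookkeeping of ramification as one passes between the layers $\cO\subset\cO_{K'}\subset\cO_F\subset\cO_L\subset\cO_{LK}$: one must check that ``totally ramified in $\cO_L$'' really does force ``totally ramified in $\cO_F$'' (using that $[L:F]=q$ and that the residue/ramification degrees multiply), and dually that the prime above $\frak{p}_1$ does not acquire ramification in going up to $LK$ — here one uses that $K/\bbQ$ is unramified at $p$ (or at least that $q\mid p-1$ forces tameness and the CM character already accounts for all the ramification). A secondary subtlety is justifying the claim that good reduction of $E/\bbQ$ at the split prime $p$ yields the ordinary/supersingular-style split behavior for the two CM primes $\frak{p}_1,\frak{p}_2$; this follows from Lubin--Tate theory (or the Main Theorem of CM), since $E$ acquires a $\frak{p}_i$-divisible formal group of height one at each of the two primes above $p$, and the $\frak{p}_i$-torsion of a height-one formal group over an unramified base generates a totally ramified extension of degree $p-1$ at that prime while being unramified at the other. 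Once these two local facts are in place, the application of Proposition \ref{S-empty} is immediate and the proposition follows.
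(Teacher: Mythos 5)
Your proposal is correct in substance and follows essentially the same strategy as the paper: good reduction at the split prime $p$ forces ordinary reduction, so exactly one prime of $\cO$ above $p$ is unramified and the other is totally ramified in $\cO_L$ (hence in $\cO_F$), and then Proposition \ref{S-empty} lets one discard the primes above $\frak{p}_2$ because they cannot split completely. Two points of comparison. For total ramification, the paper argues without Lubin--Tate theory: if $\frak{p}_2$ were not totally ramified in $\cO_L$, the ramification index above $\frak{p}_2$ in $\bbQ(E[p])$ would be smaller than $p-1$, contradicting $\bbQ(\mu_p)\subset\bbQ(E[p])$; your formal-group argument establishes the same fact (note only that $L$ over $K'$ is generated by a single torsion point of one eigenspace rather than literally decomposing into the two division fields, which does not affect the ramification statement). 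The step you defer to ``bookkeeping'' --- passing from $L/K'$ up to $LK/KK'$ --- is precisely where the paper does its computation: a prime $\frak{q}$ of $\cO_{KK'}$ above $\frak{p}_2$ has ramification index $(p-1)/q$ or $p-1$ in $\cO_{LK}$, according as $p$ ramifies in $\cO_K$ or not, so it has at most $q<p-1$ primes above it and Proposition \ref{S-empty} applies directly in $LK$, where the Selmer elements for $E/K$ actually live. Your suggested justification via ``$K/\bbQ$ is unramified at $p$'' is both unnecessary and false in the paper's main application ($p=11$ is totally ramified in $K=\bbQ(\zeta_{11})^+$); the sufficient observation is that total ramification of $\frak{p}_2$ in $L/K'$ forces, by multiplicativity of ramification indices, $e\geq (p-1)/q>1$ in $LK/KK'$, so complete splitting is impossible whether or not $p$ ramifies in $K$, and nothing about $\frak{p}_1$ staying unramified in $LK$ is needed for the stated choice of $S$.
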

\begin{proof}

Since $E/\bbQ$ has good reduction at $p$ and $p$ splits in $\cO$, $E$ has ordinary reduction at $p$. Therefore, exactly one of the primes (say $\frak{p}_2$) ramifies in $\cO_L$. If $\frak{p}_2$ is not totally ramified, then the ramification index of a prime above $\frak{p}_2$ in $\cO_{\bbQ(E[p])}< p-1$ but $\bbQ(\mu_p)\subset \bbQ(E[p])$.

The ramification index of a prime $\frak{q}$ of $\cO_{KK'}$ above $\frak{p}_2$ in $\cO_{LK}$ is either $(p-1)/q$ or $(p-1)$, depending on whether $p$ ramifies in $\cO_K$ or not. In either case, $\frak{q}$ does not split in $\cO_{LK}$ and therefore by Proposition \ref{S-empty} we are done. 
\end{proof}

\begin{remark} \label{U(LK)U(F)_remark}
 Proposition \ref{U(LK)U(Fi)} can be used to restrict the search for $p$-Selmer elements from $U(LK)$ to $U(F_i)$. One saves a factor of $q$ in the degree of the extensions considered. Moreover, if $\frak{p}_1$ as in Proposition \ref{reduce-S} does not split in $F_i$, then one can choose $S_i$  to be empty while computing the contribution to $\Sel_p(E/K)$ from $F_i$.
The next remark shows how to search for elements in $\Sel^{(p)}(E/K)$ coming from $U_{S_i}(F_i)$ in practice. 
\end{remark}

\begin{remark}\label{relavent-rank_unit_group}
Since $ \frac{\pair{\tau}{\gamma}}{G_i}\simeq \Gal(F_i/K')\simeq \bbF_p^\times$, one obtains a representation of $C_{p-1}$ on $\frac{U_{S_i}(F_i)}{U_{S_i}(F_i)^p}$.  
Theorem \ref{gen_p_descent} implies that a $p$-Selmer element represented by an element in $U_{S_i}(F_i)$ must belong to the $\phi_i$-eigenspace of $\frac{U_{S_i}(F_i)}{U_{S_i}(F_i)^p}$, where $\phi_i: \Gal(F_i/K')\to \bbF_p^\times$ is induced by the morphism $\phi$ in Theorem \ref{gen_p_descent}.
To obtain the $\phi_i$-eigenspace, one could randomly choose an element $u$ in $\frac{U_{S_i}(F_i)}{U_{S_i}(F_i)^p}$ and compute the subspace generated by $u$: $U\coloneqq \{\gamma_i^j(u)\ |\ 0\leq j\leq p-2\ \}$, where $\gamma_i$ is the image of $\gamma$ in $\frac{\pair{\tau}{\gamma}}{G_i}$. Since the action of $\gamma_i$ is easy to compute on $U$, one can rather easily check using linear algebra if $\omega(\gamma_i)=\omega(\gamma)$ is an eigenvalue of the matrix corresponding to the action of $\gamma_i$ on $U$.
\end{remark}

In the light of Conjecture \ref{exact_question} and our assumption that $q\ |\  p-1$ one must have $\theta$ as a 1-dimensional representation. If   $h_\theta(t)=t-\alpha_\theta$, then the irreducible representation $\theta'$ with associated polynomial $h_{\theta'}(t)= t-\alpha_\theta^{-1}$ also appears. This is because the ideal $(\sL(E,\chi))$ is invariant under the complex conjugation. 
The following proposition proves this independently of any conjecture. 

\begin{proposition}\label{ctp_consequence}
    Let $h_\theta(t)=t-\alpha_\theta$ and $K$ be as above and assume that $\Sha(E/K)[p^\infty]=\Sha(E/K)[p]$. 
    Then there is an irreducible subrepresentation $\theta'$ such that $h_{\theta'}(t)=t-\alpha_\theta^{-1}$ which appears with same multiplicity as $\theta$.
\end{proposition}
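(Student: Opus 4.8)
The plan is to use the Cassels--Tate pairing on $\Sha(E/K)[p]$, which is a non-degenerate alternating (hence skew-symmetric) bilinear pairing when $\Sha(E/K)[p^\infty]=\Sha(E/K)[p]$, and to exploit its Galois-equivariance under $\Gal(K/\bbQ)=\langle\tau\rangle$. First I would recall that for $a,b\in\Sha(E/K)[p]$ one has the compatibility $\langle a^\tau,b^\tau\rangle=\langle a,b\rangle^\tau$; but the pairing takes values in $\bbQ/\bbZ$ (or $\tfrac1p\bbZ/\bbZ$) on which $\Gal(K/\bbQ)$ acts trivially, so in fact $\langle a^\tau,b^\tau\rangle=\langle a,b\rangle$. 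In other words $\tau$ acts on $\Sha(E/K)[p]$ by an \emph{isometry} of a non-degenerate pairing on an $\bbF_p$-vector space.

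Next I would decompose $\Sha(E/K)[p]$ into its isotypic pieces under $\langle\tau\rangle$: since $q\mid p-1$, the group algebra $\bbF_p[\langle\tau\rangle]$ is split, and we may write $\Sha(E/K)[p]=\bigoplus_{\alpha}V_\alpha$, where $\alpha$ ranges over the $q$th roots of unity in $\bbF_p$ and $V_\alpha=\{s:s^\tau=\alpha s\}$ is the $\alpha$-eigenspace; the multiplicity of the representation $\theta$ with $h_\theta(t)=t-\alpha_\theta$ is exactly $m_\alpha:=\dim_{\bbF_p}V_{\alpha_\theta}$. The key observation is that the pairing restricted to $V_\alpha\times V_\beta$ vanishes unless $\alpha\beta=1$: indeed for $a\in V_\alpha$, $b\in V_\beta$ we have $\langle a,b\rangle=\langle a^\tau,b^\tau\rangle=\langle\alpha a,\beta b\rangle=\alpha\beta\langle a,b\rangle$, so if $\alpha\beta\neq1$ in $\bbF_p^\times$ then $\langle a,b\rangle=0$. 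Consequently the non-degenerate pairing on $\bigoplus_\alpha V_\alpha$ restricts to a \emph{perfect} pairing $V_\alpha\times V_{\alpha^{-1}}\to\tfrac1p\bbZ/\bbZ$ for every $\alpha$ (here one uses that the total pairing is non-degenerate together with the block-decomposition: the block indexed by the pair $\{\alpha,\alpha^{-1}\}$ must itself be non-degenerate). This forces $\dim V_\alpha=\dim V_{\alpha^{-1}}$, i.e. $m_{\alpha_\theta}=m_{\alpha_\theta^{-1}}$.

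Finally I would conclude: setting $\theta'$ to be the irreducible $\Gal(K/\bbQ)$-representation with $h_{\theta'}(t)=t-\alpha_\theta^{-1}$, the displayed identity $m_{\alpha_\theta}=m_{\alpha_\theta^{-1}}$ says precisely that $\theta'$ occurs in $\Sha(E/K)[p]$ with the same multiplicity as $\theta$, which is the assertion. (One should also note the trivial sub-case $\alpha_\theta=\alpha_\theta^{-1}$, i.e. $\alpha_\theta=\pm1$; since $\theta$ is a non-trivial irreducible of a group of odd prime order $q$ — or more generally since $\alpha_\theta$ is a primitive $q$th root of unity — this does not arise when $q>2$, and when it does arise the statement is vacuous with $\theta'=\theta$.) The main obstacle, and the only place where real care is needed, is the input that the Cassels--Tate pairing on $\Sha(E/K)[p]$ is non-degenerate and alternating: this requires the hypothesis $\Sha(E/K)[p^\infty]=\Sha(E/K)[p]$ (so that $\Sha(E/K)[p]$ equals the $p$-primary part of $\Sha$ and the pairing on the full group restricts non-degenerately to the $p$-torsion), and one must cite the standard reference for the existence and properties of this pairing over number fields, checking that its Galois-naturality under $\mathrm{Gal}(K/\bbQ)$ holds — the latter is automatic from the functoriality of the construction in the field $K$.
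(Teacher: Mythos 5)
Your proposal is correct and follows essentially the same route as the paper's own proof: Galois-equivariance of the Cassels--Tate pairing gives orthogonality between the $\alpha$- and $\beta$-eigenspaces unless $\alpha\beta=1$, and the hypothesis $\Sha(E/K)[p^\infty]=\Sha(E/K)[p]$ supplies non-degeneracy, forcing $\dim V_{\alpha_\theta}=\dim V_{\alpha_\theta^{-1}}$. The only cosmetic difference is that you phrase the final step as a perfect pairing between the blocks $V_\alpha$ and $V_{\alpha^{-1}}$, whereas the paper argues the equality of dimensions by contradiction; these are the same argument.
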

\begin{proof}
Let $\pair{\cdot}{\cdot}_\ctp: \Sha(E/K)[p]\to \bbZ/p\bbZ$ be the Cassels-Tate pairing. Then $\pair{\cdot}{\cdot}_\ctp$ is alternating with the following properties
\begin{enumerate}
\item If $\forall\ b\in \Sha(E/K)[p]$, $\pair{a}{b}=0$  $\iff$ $a\in\Im(\Sha(E/K)[p^2]\stackrel{\cdot p}{\to}\Sha(E/K)[p])$. 
\item $\pair{a^\tau}{b^\tau}_\ctp=\pair{a}{b}_\ctp$, where  $\Gal(K/\bbQ)=\langle\tau\rangle$. 
\end{enumerate}
For details on the Cassels-Tate pairing and various known definitions see \cite{cassels}.

As a $\Gal(K/\bbQ)$-representation,
$a^\tau=\alpha_\theta a$ for all $a\in (\Sha(E/K)[p])_\theta$. 
Let $\theta_2$ and $\theta_1$ be two distinct subrepresentations with generators $a_1$ and $a_2$. Using the second property of $\pair{\cdot}{\cdot}_\ctp$ above one obtains 
\[\pair{a_1^\tau}{a_2^\tau}_\ctp=\alpha_{\theta_1}\alpha_{\theta_2}\pair{a_1}{a_2}_\ctp=\pair{a_1}{a_2}_\ctp.\]
Therefore $\pair{a_1}{a_2}_\ctp=0$ $\iff$ $\alpha_{\theta_1}\ne \alpha_{\theta_2}^{-1}.$
The assumption $\Sha(E/K)[p^\infty]=\Sha(E/K)[p]$ implies that for each $\theta$, there is a $\theta'$ with $\alpha_{\theta'}=\alpha_\theta^{-1}$.

We now prove the multiplicity part of the claim. Let $\alpha\in \mu_q\subset \bbF_p$ and define \[M_{\alpha}\coloneqq\bigoplus\limits_{\substack{\theta\\h_\theta(t)=t-\alpha}}(\Sha(E/K)[p])_\theta.\]
Note that $\dim_{\bbF_{p}}(M_\alpha)$ counts the multiplicity of $\theta$.
Without loss of generality,  assume that $\dim_{\bbF_p}(M_\alpha)>\dim_{\bbF_p}(M_{\alpha^{-1}})$. Then there is a $a\in M_\alpha$ such that $\pair{a}{b}_\ctp=0$ for all $a'\in M_{\alpha^{-1}}$. Hence $\pair{a}{b}=0$
for all $b \in \Sha(E/K)[p]$. Therefore, $\dim_{\bbF_p}(M_\alpha)=\dim_{\bbF_p}(M_{\alpha^{-1}})$. This proves the proposition.
\end{proof}
\begin{remark}
   In the light of Remark \ref{U(LK)U(F)_remark}, the above proposition reduces our search for $p$-Selmer elements coming from $U(LK)$ to searching in $U(F_i)$ for $1\leq i\leq (q-1)/2$.  
\end{remark}

\begin{remark}
Let $\bbF\coloneqq \bbF_p(\zeta_q)$.
In the case when $q\ \nmid\ p-1$ one can extend the pairing $\pair{\cdot}{\cdot}_\ctp$ to 
\[\pair{\cdot}{\cdot}: (\Sha(E/K)[p])\otimes_{\bbF_p}\bbF\times (\Sha(E/K)[p])\otimes_{\bbF_p}\bbF\to \bbF.
\]
Since $\bbF$ is a trivial $\Gal(K/\bbQ)$-module, it follows that if $(\Sha(E/K)[p])_\theta\ne 0$ with the associated polynomial $h_\theta$ then 
$(\Sha(E/K)[p])_{\theta'}\ne 0$, where the polynomial $h_{\theta'}$ has the property that for each root $\alpha$ of $h_{\theta'}$, $\alpha^{-1}$ is a root of $h_{\theta}$.
Furthermore, $\theta$ and $\theta'$ occur with same multiplicity.
\end{remark}

Algorithm \ref{alg:cap} summaries the above discussion.

\begin{algorithm}[H]
\caption{}\label{alg:cap}
\begin{algorithmic}
\Require \thmbreakheader\begin{enumerate} \item Elliptic curve $E/\bbQ$ with complex multiplication by order $\cO$ and a prime $p$ such that $p$ splits in $\cO$.
\item A cyclic extension $K/\bbQ$ of prime degree $q$ such that $q\ |\ p-1$.
\end{enumerate}
\Ensure The minimal polynomials $h_\theta$ and the eigenspaces $\left(\Sel_p(E/K)\right)_\theta$.
\State \thmbreakheader
\begin{enumerate}
\State Compute the \'{e}tale algebra $L$ for $E/\bbQ$ by adjoining a point corresponding to a root of the $p$-torsion polynomial, the subfield $F\subset L$ with $[L:F]=q$, $\Gal(L/F)=\langle \sigma\rangle$ and $\Gal(K/\bbQ)=\langle\tau\rangle$. 
\State For each prime $\frak{p}\subset \cO_K$ above $p$, compute the group $\prod\limits_{\frak{p}|p}\frac{J(K_\frak{p})}{pJ(K_\frak{p})}$.
\State Choose at random  primitive elements $l\in L$ and $k\in K$, and compute the elements $\alpha_i=\sum\limits_{j=0}^{q-1}\sigma^{ij}(l)\tau^j(k)$ for $0\leq i\leq q-1$. 
\If{ for each $i$ such that  $0\leq i\leq q-1$, $\deg(\alpha_i)$ over $F$ is $q$}
\State $F_i\gets F(\alpha_i)$ and $G_i\gets\langle \sigma^i\tau\rangle$.
\EndIf
\State Compute the maximal orders $\cO_{F_i}$ using equation order for $F_i$ and $\cO_F$.
\For{$0\leq i\leq q-1$ (one can let $1\leq i\leq q-1$ in order to compute $\left(\frac{\Sel_p(E/K)}{\Sel_p(E/\bbQ)}\right)_\theta$)}
\State Compute prime ideals $\frak{p}\subset \cO$ above $p$
that split in $\cO_{F_i}$.
\State $S_i\gets \{\text{primes of }\cO_{F_i}\text{ above $\frak{p}$}\}.$
\State Compute the $S_i$-class groups $\Cl_{S_i}(\cO_{F_i})$ and unit groups $U_{S_i}(\cO_{F_i})$ of $\cO_{F_i}$. 
\State Compute $R(F_i, S_i;p)$ using $\Cl_{S_i}(\cO_{F_i})$ and $U_{S_i}(\cO_{F_i})$ of $\cO_{F_i}$.
\State Use Remark \ref{relavent-rank_unit_group} to search for $p$-Selmer elements and let $H_i$ be the subgroup of the $\Sel_p(E/K)$ found. 
\State $h_\theta\gets  (x-\phi(\sigma)^{-i})^{\dim_{\bbF_p}(H_i)}$ and $\left(\frac{\Sel_p(E/K)}{\Sel_p(E/\bbQ)}\right)_\theta\gets H_i$.
\EndFor
\end{enumerate}
\end{algorithmic}
\end{algorithm}

\begin{remark}
     In order to verify Conjecture \ref{exact_question} we will only consider the curves with rank 0 over $K^\chi$ and $\bbQ$, therefore the above algorithm will actually be computing $\left(\frac{\Sha(E/K^\chi)[p]}{\Sha(E/\bbQ)[p]}\right)_\theta$. Furthermore, if we are ready to assume the BSD-conjecture and if $\Sha_\an(E/K^\chi)[p^\infty]=p^2=\Sha(E/K^\chi)[p]$, then using Proposition \ref{ctp_consequence} one can reduce the search from $1\leq i\leq q-1$ to $1\leq i\leq \frac{q-1}{2}$ in step 8 of the above algorithm.  Moreover, if $E$ has good reduction at $p$ and there is exactly one prime above $p$ in $\cO_K$, then using Proposition \ref{reduce-S} one reduces the computation of $R(F_i,S_i;p)$ for those values of $i$ when $S_i=\emptyset$.
\end{remark}
 \subsection{Numerical evidence} We present an example of the above procedure for the curve with LMFDB-label ``7056.bg1''. More examples can be found in \cite{github}, together with the codes.  

Fix an embedding of $\bbZ[\zeta_5]\hookrightarrow\bbC$ by $\zeta_5\mapsto \exp^{4i\pi /5}.$
 Let $K^\chi:= \bbQ(\zeta_{11})^+$, $p=11$ and 
$E$ given by 
$y^2=x^3-262395x +51731946$. Let $\chi$ be a primitive character with field of definition $K^\chi$, conductor $\frak{f}_\chi=11$, order $d=5$ and root number $w(\chi)=1$.
Let $\tau:K^\chi\to K^\chi$ be the automorphism that sends a root $\alpha$ of the polynomial $x^5+x^4-4x^3-3x^2+3x+1$ to $\alpha^2-2$. 
We verify that Conjecture \ref{exact_question} holds for $E$ for all characters of $K^\chi$.

    $E$ has complex multiplication by $\bbZ[\sqrt{-7}]$. Therefore, $K'=\bbQ(\sqrt{-7})$. $L$ is the field of definition of one of the $11$-torsion points (say $P$) of $E/\bbQ$ that we fix and $[L:\bbQ]=20$. 
    Following notations in Theorem \ref{gen_p_descent} we fix a generator $\gamma$ of $\Gal(L/K')$ and compute $\phi(\gamma)=-5$.
    One computes the subfields $F_i$ as in step 5 in Algorithm \ref{alg:cap} using $L$ and $K^\chi$ and $\sigma =\gamma^2$.
    Exact description of the fields can be found in  \cite{github}. 
    
We check using Magma that $E$ satisfies the conditions of  Theorem \ref{MainTh}. We find that 

$\rk(E/\bbQ)=\rk(E/K^\chi)=0$ (using \texttt{TwoSelmerGroup()}). This implies that $\Sel_{11}(E/K^\chi)=\Sha(E/K^\chi)[11]$, and
 $|\Sha_{\an}(E/\bbQ)|=1$  and $\Sha_{\an}(E/K^\chi)\simeq \bbF_{11}^2\oplus \bbF_{31}^2$ (using \texttt{ConjecturalSha()}). 
 Furthermore, \[\prod\limits_{M_{K^\chi}}c_v(E)=\prod\limits_{M_{\bbQ}}c_v(E)\] for $i\in\{1,2\}$, where $M_{K^\chi}$ is the set of finite places of $K^\chi$,
and $E(\bbQ)_\tors=E(K^\chi)_\tors\iso \bbZ/2\bbZ$.

By Theorem \ref{MainTh} we have 
$N_{\bbQ(\zeta_5)^+/\bbQ}(\sL(E,\chi))=11\cdot 31$. Using the fact that $31$ is principal in $\cO_{\bbQ(\zeta_5)^+}$, we compute that 
$(\sL(E, \chi))_{11} = \frak{p}_1 \overline{\frak{p}_1}$, with $\frak{p}_1 = ((\zeta_{5}-3),11)$, where we have pulled back $\sL(E,\chi)$ under the previously fixed embedding $\bbZ[\zeta_5]\to \bbC$.

One checks that $11$ is a prime of good reduction of $E$, therefore using Proposition \ref{reduce-S} exactly one of the two ideals of $\cO$ above $11$ (say $\frak{p}$) is unramified. Moreover, since $11$ is totally ramified in $K^\chi$, one deduces that one can choose $S_i=\emptyset$ as in step 10 in Algorithm \ref{alg:cap}.
By Proposition \ref{ClassGroupreduceF} we reduce our computations to unit groups as $\Cl(F_i)[11]=0$ for all $1\leq i\leq 4$ (we use the function \texttt{ClassGroup()}).  
By Propositions \ref{U(LK)U(Fi)} and  \ref{relavent-rank_unit_group}  we can find a rank $1$ submodule generated by $u_i$ of $R(F_i,\emptyset;11)$ for all $i$, where $R(F_i,\emptyset;11)$ are computed using \texttt{pSelmerGroup()}. By Proposition \ref{ctp_consequence} we can further refine the search to $U(F_i)$ for $1\leq i\leq 2$. Since $|\Sha(E/K)[11]|=11^2$, Proposition \ref{ctp_consequence} also shows that only one of $u_1$ or $u_2$ represents an element $\Sel_{11}(E/K^\chi)[11]$. We find that $u_1\in \Sel_{11}(E/K^\chi)$.
By Proposition \ref{ctp_consequence} there is $u_4\in U(F_4)$ such that $u_4\in \Sel_{11}(E/K^\chi)$. Using Proposition \ref{U(LK)U(Fi)} we have $h_\theta$ is either $x-3$ or $x-4$. This shows that $(h_\theta(\zeta_5),11)=\frak{p}_1$ or $\overline{\frak{p}_1}$.

 Performing the same procedure as above for $E/\bbQ$ with LMFDB-label ``6400.a1'' and  $\chi$ as above with the same embedding of $\bbZ[\zeta]\hookrightarrow\bbC$, one obtains $(\sL(E,\chi))_{11}=\frak{p}_2\overline{\frak{p}}_2$, and $h_\theta$ is either $x-5$ or $x-9$.
This shows that as $E$ and the Galois-module structure of $\Sha(E/K)[p]$ varies, so does the factorization of $(\sL(E,\chi))_p$.

\begin{remark}
    Tommy Hofmann has computed the full $\Cl(LK^{\chi})$ in the above case and his computations show that $\Cl(LK^\chi)\simeq (\bbZ/2\bbZ)^7$.
\end{remark}

\begin{remark}
Let $E/\bbQ$ be the elliptic curve with LMFDB-label ``207025.by1'' given by the Weierstrass model
\[E/\bbQ: y^2 + xy = x^3 - x^2 - 7698742x + 
8223502041,\]
and $K^\chi= \bbQ(\zeta_{11})^+$. Then $\Sha(E/\bbQ)=0$ and the conjectural order of $|\Sha(E/K^\chi)|$  is $=11^4$.
In the notation of Theorem \ref{MainTh}
the ideal $(\sL(E,\chi))=(\frak{p}_2\overline{\frak{p}_2})^2$, where $\frak{p}_2=(\zeta_5-3,11).$ 
Hence, if the Conjecture \ref{exact_question} holds, then Proposition \ref{U(LK)U(Fi)}
along with the fact that $\Cl(F_i)[11]=0$ implies that we must have $\Sha(E/K^\chi)[11]\simeq (\bbZ/11^2\bbZ)^2$ (as an abelian group). In fact, we compute $\Sha(E/K^\chi)[11^2]\simeq (\bbZ/11\bbZ)^2$.
\end{remark}

\begin{remark}\label{no_isogeny_descent}
Our idea to perform an 11-descent on CM elliptic curves in order to produce numerical evidence towards Conjecture \ref{exact_question} is optimal in the sense that finding elliptic curves $E/\bbQ$ that admit an 11-isogeny seems to be a hard problem. Given an order $5$ Dirichlet character $\chi$, this is equivalent to finding $K^\chi$-rational points on the elliptic curve $X_0(11)$ such that the $j$-invariant is rational. A simple search through $C_5$ number fields $K^\chi$ on LMFDB with ``low'' discriminant yields no field where $X_0(11)$ even acquires a $K^\chi$-rational point.

Another idea could have been to use $\bbQ$-rational points on $X_0(13)$, which is a rational curve, in order to create examples for $p=13$ and $q=7$. However, not much can be said about the class group and the unit group of the degree 84 \'etale algebra corresponding to 13-isogeny descent in this case. 
\end{remark}    

We end this section with the following list of curves for which we have verified Conjecture \ref{exact_question} for all non-trivial primitive Dirichlet characters factoring through the field $K=\bbQ(\zeta_{11})^+$ with $\Gal(K/\bbQ)$ generated by $\tau$ as before, and $p=11$.
The predicted order of $\Sha(E/K)[11]$ is $11^2$ in each case, and in fact the curves in the following list along with ``7056.bg1'' and ``207025.by1'' are all the curves in LMFDB with conductors $\frak{f}_E$ coprime to 11 and $\Sha(E/K)[11]\ne 0$ that satisfy the conditions of Algorithm \ref{alg:cap}.

\vspace{.2in}
\begin{center}
\begin{tabular}{|c|c|c|}
    \hline
    LMFDB-label &CM-discriminant &$h_\theta$ \\
    \hline
    \hline
    5776.i1 & -19 & $(x-5),\ (x-9)$\\
     \hline
    6400.r1 & -43 & $(x-5),\ (x-9)$\\
     \hline
     16641.g1 & -43 &
     $(x-5),\ (x-9)$\\
     \hline
     57600.ch2& -8 & $(x-3),\ (x-4)$\\
     \hline
     90601.c1 & -43 & $(x-3),\ (x-4)$\\
     \hline
     215296.c1& -8 & $(x-3),\ (x-4)$\\
     \hline
     461041.h1& -28 & $(x-5),\ (x-9)$\\
     \hline
     499849.d1& -28 & $(x-5),\ (x-9)$\\    
    \hline
\end{tabular}
\end{center}

\begin{bibdiv}
\begin{biblist}

\bib{explicitn-descent}{article}{
author = {Cremona, J. E},
author ={Fisher, T. A},
author ={O'Neil,C.},
author= {Simon, D.},
author ={Stoll, M.},
doi = {doi:10.1515/CRELLE.2008.012},
url = {https://doi.org/10.1515/CRELLE.2008.012},
title = {Explicit n-descent on elliptic curves, I. Algebra},
journal = {Journal f\"{u}r die reine und angewandte Mathematik},
number = {615},
volume = {2008},
year = {2008},
pages = {121--155},
}

\bib{norm_rel}{article}{
title = {Norm relations and computational problems in number fields},
author = {Biasse, Jean-François},
author = {Fieker, Claus}, 
author = {Hofmann, Tommy},
author = {Page, Aurel}
pages = {2373-1414},
volume = {105},
journal = {Journal of the London Mathematical Society},
year = {2022},
}
\bib{dew}{article}{
title = {On a BSD-type formula for L-values of Artin twists of elliptic curves},
author = {Dokchitser,Vladimir},
author = {Evans, Robert}, 
author = {Wiersema, Hanneke}
pages = {199--230},
number = {773},
journal = {Journal für die reine und angewandte Mathematik (Crelles Journal)},
year = {2021},
}
\bib{vis7fisher}{article}{
title = {Visualizing elements of order 7 in the Tate-Shafarevich group of an elliptic curve},
author = {Fisher, Tom},
year = {2016},
volume = {19}
number = {Special issue A},
journal = {LMS J. Comput. Math.},
pages = {100--114},
}

\bib{Gb_Vt}{article}{
title = {On the Iwasawa invariants of elliptic curves},
author = {Greenberg, Ralph},
author = {Vatsal, Vinayak},
journal = {Inventiones Mathematicae},
volume = {142},
year = {2000},
pages = {17--63},
}

\bib{lmfdb}{misc}{
  shorthand    = {LMFDB},
  author       = {The {LMFDB Collaboration}},
  title        = {The {L}-functions and modular forms database},
  howpublished = {\url{https://www.lmfdb.org}},
  year         = {2024},
  note         = {[Online; accessed 12 November 2024]},
}

\bib{poonen-schaefer}{article}{
author = {Poonen, Bjorn}, 
author = {Schaefer, Edward F.},
journal = {Journal für die reine und angewandte Mathematik},
pages = {141-188},
title = {Explicit descent for Jacobians of cyclic coevers of the projective line.},
volume = {488},
year = {1997},
}

\bib{cassels}{article}{
author = {Poonen, Bjorn},
author = {Stoll, Michael},
title = {Cassels-Tate pairing on principally polarized abelian varieties},
year = {1999},
journal = {Annals of Mathematics},
volume = {150}, 
number ={3},
pages = {1109-1149},
}

\bib{rohrlich}{inproceedings}{
author = {Rohrlich, David E.},
title = {The vanishing of certain Rankin-Selberg convolutions},
booktitle = {Automorphic Forms and Analytic
Number Theory},
year = {1990},
publisher = { Les publications CRM, Montreal},
pages = {123-133},
}

\bib{schaefer-stoll-p-descent}{article}{
  title={How to do a p-descent on an elliptic curve},
  author={Schaefer, Edward F.}
  author={Stoll, Michael},
  journal={Transactions of the American Mathematical Society},
  year={2003},
  volume={356},
  pages={1209-1231},
}

\bib{github}{misc}{
author = {Shukla, Himanshu},
  title = {Gal\textunderscore Sha},
  year = {2024},
  publisher = {GitHub},
  journal = {GitHub repository},
  note = {\url{https://github.com/highshukla/Gal_Sha}},
}

\bib{Stoll_survey}{article}{
	author = {Stoll,Michael},
	title ={Descent on Elliptic Curves},
	year = {2012},
	volume = {36},
	journal = {Panoramas et Synth\`{e}ses},
	pages = {151--179},
	publisher = {Soci\`{e}t\`{e} Math\`{e}matique de France},
}

\bib{Tim_L_function}{article}{
author = {Dokchitser, Tim},
title = {{Computing Special Values of Motivic L-Functions}},
volume = {13},
journal = {Experimental Mathematics},
number = {2},
publisher = {A K Peters, Ltd.},
pages = {137 -- 150},
year = {2004},
}

\bib{vatsal}{article}{
author = {Vatsal, Vinayak},
title = {Canonical periods and conjgruence formulae},
year = {1999},
volume = {78},
number = {2},
journal = {Duke Mathematical Journal},
pages = {397--419},
}

\bib{WW}{article}{
title = {Integrality of twisted L-values of elliptic curves},
author = {Wiersema, Hanneke},
author = {Wuthrich, Christian},
journal = {Documenta Mathematica},
year = {2022},
volume = {27},
pages = {2041-2066},
}

\end{biblist}
\end{bibdiv}
\end{document}